\newcommand{\ex}{\mathbb{E}}
\newcommand{\pr}{\mathbb{P}}
\newcommand{\R}{\mathbb{R}}
\newcommand{\bigo}{\mathcal{O}}
\newcommand{\N}{\mathcal{N}}
\DeclarePairedDelimiterX{\norm}[1]{\lVert}{\rVert}{#1}
\DeclarePairedDelimiterX{\abs}[1]{\lvert}{\rvert}{#1}
\newtheorem{definition}{Definition}[section]
\newtheorem{lemma}[definition]{Lemma}
\newtheorem{theorem}[definition]{Theorem}
\newtheorem{Coro}[definition]{Corollary}
\theoremstyle{definition} 
\newtheorem{example}[definition]{Example}
\newtheorem*{remark}{Remark}
\begin{document}
\Sconcordance{concordance:paperOITV.tex:paperOITV.Rnw:%
1 76 1 1 0 50 1}
\Sconcordance{concordance:paperOITV.tex:./Section1.Rnw:ofs 128:%
1 392 1}
\Sconcordance{concordance:paperOITV.tex:./Section2.Rnw:ofs 521:%
1 81 1}
\Sconcordance{concordance:paperOITV.tex:./Section3.Rnw:ofs 603:%
1 167 1}
\Sconcordance{concordance:paperOITV.tex:./Section4.Rnw:ofs 771:%
1 117 1}
\Sconcordance{concordance:paperOITV.tex:./Section5.Rnw:ofs 889:%
1 166 1}
\Sconcordance{concordance:paperOITV.tex:./Section6.Rnw:ofs 1056:%
1 55 1}
\Sconcordance{concordance:paperOITV.tex:./Section7.Rnw:ofs 1112:%
1 248 1}
\Sconcordance{concordance:paperOITV.tex:./Section8.Rnw:ofs 1361:%
1 6 1}
\Sconcordance{concordance:paperOITV.tex:paperOITV.Rnw:ofs 1368:%
136 7 1}
\Sconcordance{concordance:paperOITV.tex:./AppendixE.Rnw:ofs 1376:%
1 155 1}
\Sconcordance{concordance:paperOITV.tex:./AppendixB.Rnw:ofs 1532:%
1 325 1}
\Sconcordance{concordance:paperOITV.tex:./AppendixC.Rnw:ofs 1858:%
1 159 1}
\Sconcordance{concordance:paperOITV.tex:./AppendixA.Rnw:ofs 2018:%
1 673 1}
\Sconcordance{concordance:paperOITV.tex:paperOITV.Rnw:ofs 2692:%
148 12 1}

\begin{frontmatter}
\title{On the total variation regularized estimator over a class of tree graphs}
\runtitle{Total variation regularization over graphs}


\author{\fnms{Francesco} \snm{Ortelli}\corref{}\ead[label=e1]{francesco.ortelli@stat.math.ethz.ch}}\and \author{\fnms{Sara} \snm{van de Geer}\ead[label=e2]{geer@stat.math.ethz.ch}}
\address{R\"{a}mistrasse 101\\ 8092 Z\"{u}rich\\ \printead{e1}; \printead{e2}}

\affiliation{Seminar for Statistics, ETH Z\"{u}rich}

\runauthor{Ortelli, van de Geer}

\begin{abstract}
We generalize to tree graphs obtained by connecting path graphs an oracle result obtained for the Fused Lasso over the path graph. Moreover we show that it is possible to substitute in the oracle inequality the minimum of the distances between jumps by their harmonic mean. In doing so we prove a lower bound on the compatibility constant for the total variation penalty. Our analysis leverages insights obtained for the path graph with one branch to understand the case of more general tree graphs.
 As a side result, we get insights into the irrepresentable condition for such tree graphs.
\end{abstract}


\begin{keyword}
\kwd{Total variation regularization}
\kwd{Lasso}
\kwd{Fused Lasso}
\kwd{Edge Lasso}
\kwd{Path graph}
\kwd{Branched path graph}
\kwd{Tree}
\kwd{Compatibility constant}
\kwd{Oracle inequality}
\kwd{Irrepresentable condition}
\kwd{Harmonic mean}
\end{keyword}

\end{frontmatter}
\tableofcontents

\section{Introduction}\label{sec1}

The aim of this paper is to refine and extend to the more general case of ``large enough'' tree graphs the approach used by \cite{dala17} to prove an oracle inequality for the Fused Lasso estimator, also known as total variation regularized estimator. As a side result, we will obtain some insight into the irrepresentable condition for such ``large enough'' tree graphs.

The main reference of this article is \cite{dala17}, who consider the path graph.  We refine and generalize their approach (i.e. their Theorem 3, Proposition 2 and Proposition 3) to the case of more general tree graphs. The main refinements we prove are an oracle theory for the total variation regularized estimators over trees when the first coefficient is not penalized, a proof of an (in principle tight) lower bound for the compatibility constant and, as a consequence of this bound, the substitution in the oracle bound of the minimum of the distances between jumps by their harmonic mean. We elaborate the theory from the particular case of the path graph to the more general case of tree graphs which can be cut into path graphs. The tree graph with one branch is in this context the simplest instance of such more complex tree graphs, which allows us to develop insights into more general cases, while keeping the overview.

The paper is organized as follows: in Section \ref{sec1} we expose the framework together with a review of the literature on the topic; in Section \ref{sec2} we refine the proof of Theorem 3 of \cite{dala17} and adapt it to the case where one coefficient of the Lasso is left unpenalized: this proof will be a working tool for  establishing  oracle inequalities for total variation penalized estimators; in Section \ref{sec3} we expose how to easily compute objects related to projections which are needed for finding explicit bounds on weighted compatibility constants and when the irrepresentable condition is satisfied; in Section \ref{sec4} we present a tight lower bound for the (weighted) compatibility constant for the Fused Lasso and use it with the approach exposed in Section \ref{sec2} to prove an oracle inequality; in Section \ref{sec5} we generalize Section \ref{sec4} to the case of the branched path graph; Section \ref{sec6} presents further extensions to more general tree graphs; Section \ref{sec7} handles the asymptotic pattern recovery properties of the total variation regularized estimator on the (branched) path graph exposes an extension to more general tree graphs; Section \ref{sec8} concludes the paper. 

\subsection{General framework}

We study total variation regularized estimators on graphs, their oracle properties and their asymptotic pattern recovery properties.

For a vector $v\in\R^n$ we write $\norm{v}_1=\sum_{i=1}^n \abs{v_i}$ and $\norm{v}^2_n=\frac{1}{n}\sum_{i=1}^n v_i^2$.

Let $\mathcal{G}=(V,E)$ be a graph, where $V$ is the set of vertices and $E$ is the set of edges. Let $n:=\abs{V}$ be its number of vertices and $m:=\abs{E}$ its number of edges.
Let the elements of $E$ be denoted by $e(i,j)$, where $i,j\in V$ are the vertices connected by an edge.

Let $D_{\mathcal{G}}\in\R^{m\times n}$ denote the \textbf{incidence matrix} of a graph $\mathcal{G}$, defined as
\begin{equation*}
(D_e)_k=\begin{cases} -1, & \text{if } k=\min(i,j)\\
 +1, & \text{if } k=\max(i,j)\\
 0, & \text{else}, 
\end{cases}
\end{equation*}
where $D_e\in\R^n$ is the row of $D_{\mathcal{G}}$ corresponding to the edge of $e(i,j)$. 

Let $f\in\R^n$ be a function defined at each vertex of the graph.
The \textbf{total variation} of $f$ over the graph $\mathcal{G}$ is defined as
\begin{equation*}
\text{TV}_{\mathcal{G}}(f):= \norm{D_{\mathcal{G}}f}_1 =\sum_{e(i,j)\in E} \abs{f_j-f_i} .
\end{equation*}

Assume we observe the values of a signal $f^0\in\R^n$ contaminated with some Gaussian noise $\epsilon \sim \N_n(0,\sigma^2 \text{I}_n)$, i.e. $Y=f^0+\epsilon$.
The \textbf{total variation regularized estimator} $\widehat{f}$ of $f^0$ over the graph $\mathcal{G}$  is defined as 
\begin{equation*}
\widehat{f}:=\arg\min_{f\in\R^n}\left\{\norm{Y-f}^2_n+2\lambda\norm{D_{\mathcal{G}}f}_1 \right\},
\end{equation*}
where $\lambda>0$ is a tuning parameter. This is a special case of the generalized Lasso with design matrix $\text{I}_n$ and penalty matrix $D_\mathcal{G}$. Hereafter we suppress the subscript $\mathcal{G}$ in the notation of the incidence matrix of the graph $\mathcal{G}$.

In this article, we restrict our attention to tree graphs, i.e. connected graphs with $m=n-1$.  For a tree graph we have that $D\in\R^{(n-1)\times n}$ and $\text{rank}(D)=n-1$. In order to manipulate the above problem to obtain an (almost) ordinary Lasso problem, we define $\widetilde{D}$, the \textbf{incidence matrix rooted at vertex $i$},  as 

\begin{equation*}
\widetilde{D}:=\begin{bmatrix} A\\ D \end{bmatrix}\in\R^{n\times n},
\end{equation*}

where 

\begin{equation*}
A:=(0,\ldots, 0,\underbrace{1}_{i},0,\ldots,0)\in\R^n
\end{equation*}

In the following, we are going to root the incidence matrix at the vertex $i=1$, obtaining in this way a lower triangular matrix with ones on the diagonal, and minus ones as nonzero off-diagonal elements. The quadratic matrix $\widetilde{D}$ is invertible and we denote its inverse by $X:= {\widetilde{D}}^{-1}$.

We  now perform a change of variables. Let $\beta:=\widetilde{D}f$, then $f=X\beta$.
The above problem can be rewritten as
\begin{equation*}
\widehat{\beta}=\arg\min_{\beta\in\R^n}\left\{\norm{Y-X\beta}_n^2 +2\lambda\sum_{i=2}^n \abs{\beta_i} \right\},
\end{equation*}
i.e. an ordinary Lasso problem with $p=n$, where the first coefficient $\beta_1$ is not penalized. Note that, in order to perform this transformation, it is necessary that we restrict ourselves to tree graphs, since we want $\widetilde{D}$ to be invertible.

Let $X=(X_1,X_{-1})$, where $X_1\in\R^n$ denotes the first column of $X$ and $X_{-1}\in\R^{n\times (n-1)}$ the remaining $n-1$ columns of $X$. Let $\beta_{-1}\in \R^{n-1}$ be the vector $\beta$ with the first entry removed.  Thanks to some easy calculations and denoting by $\widetilde{Y}$ and $\widetilde{X}_{-1}$ the column centered versions of $Y$ and $X_{-1}$, it is possible to write
\begin{equation*}
\widehat{\beta}_{-1}=\arg\min_{\beta_{-1}\in\R^{n-1}}\left\{ \norm{\widetilde{Y}-\widetilde{X}_{-1}\beta_{-1}}^2_n+2\lambda\norm{\beta_{-1}}_1 \right\}
\end{equation*}
and
\begin{equation*}
\widehat{\beta}_1=\frac{1}{n} \sum_{i=1}^n Y_i-(X_{-1})_i\widehat{\beta}_{-1},
\end{equation*}
and both $\widehat{\beta}_{-1}$ and $\widehat{\beta}_1$ depend on $\lambda$.

Note that prediction properties of $\widehat{\beta}$, i.e. the properties of $X\widehat{\beta}$, will translate into properties of the estimator $\widehat{f}$, often also called Edge Lasso estimator.

\begin{remark}
In the construction of an invertible matrix starting from $D$, it would be possible to choose $A=(1, \ldots, 1)$ as well. Indeed, when we perform the change of variables from $f$ to $\beta$, $\hat{\beta}_{-1}$ estimates the jumps and thus gives information about the relative location of the signal. However to be able to estimate the absolute location of the signal we either need an estimate of the absolute location of the signal at one point (choice $A:=(0,\ldots, 0,1,0,\ldots,0)$, $\hat{\beta}_{i}=\hat{f}_{i}$, in particular we consider the case $i=1$), or of the ``mean'' location of the signal (choice $A=(1, \ldots, 1)$, $\hat{\beta}_1=\sum_{i=1}^n \hat{f}_i$).
\end{remark}

\subsection{The path graph and the path graph with one branch}

In this article we are interested, besides the more general case of ``large enough'' tree graphs, in the particular cases of $D$ being the incidence matrix of either the path graph or the path graph with one branch.
The choice of $A$ makes it easy to calculate the matrix $X$ and gives a nice interpretation of it.

Let $P_1$ be the \textbf{path matrix} of the graph $\mathcal{G}$ with reference root the vertex 1. The matrix $P_1$ is constructed as follows:
\begin{equation*}
(P_1)_{ij}:= \begin{cases} 1, & \text{ if the vertex $j$ is on the path from vertex 1 to vertex $i$,}\\
0, &\text{ else.}
\end{cases}
\end{equation*}

\begin{theorem}[Inversion of the rooted incidence matrix]\label{invroot}
For a tree graph, the rooted incidence matrix $\widetilde{D}$ is invertible and
\begin{equation*}
X=\widetilde{D}^{-1}=P_1.
\end{equation*}
\end{theorem}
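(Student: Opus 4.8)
The plan is to verify directly that $\widetilde D P_1 = \mathrm{I}_n$, which suffices since $\widetilde D$ is square; invertibility then follows, and $X = \widetilde D^{-1} = P_1$. Write $\widetilde D = \begin{bmatrix} A \\ D \end{bmatrix}$ with $A = e_1^\top$ (the root is vertex $1$), and denote the columns of $P_1$ by $(P_1)_{\cdot k}$. I would split the computation into the row coming from $A$ and the rows coming from $D$.

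First, the $A$-row: $A\,(P_1)_{\cdot k} = (P_1)_{1k}$, which is $1$ iff vertex $k$ lies on the path from vertex $1$ to vertex $1$, i.e. iff $k=1$. So the first row of $\widetilde D P_1$ is $e_1^\top$, as required. Second, fix an edge $e(i,j)\in E$ with $i<j$ (so in a tree rooted at $1$, vertex $i$ is the parent of vertex $j$, i.e.\ $i$ is the vertex adjacent to $j$ on the path from $1$ to $j$). The corresponding row $D_e$ has $-1$ in position $i$, $+1$ in position $j$, and $0$ elsewhere, so $D_e\,(P_1)_{\cdot k} = (P_1)_{jk} - (P_1)_{ik}$. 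The key structural fact about a tree is that the path from $1$ to $j$ is exactly the path from $1$ to $i$ with the single vertex $j$ appended; hence for every vertex $k$ we have $(P_1)_{jk} = (P_1)_{ik}$ unless $k=j$, in which case $(P_1)_{jk}=1$ and $(P_1)_{ik}=0$. Therefore $D_e\,(P_1)_{\cdot k} = \ind\{k=j\}$, so the row of $\widetilde D P_1$ indexed by $e(i,j)$ equals $e_j^\top$. Since the edges of a tree rooted at $1$ are in bijection with the non-root vertices $j\in\{2,\dots,n\}$ via $j\mapsto$ (the edge joining $j$ to its parent), the $D$-block of $\widetilde D P_1$ is exactly the $(n-1)\times n$ matrix whose rows are $e_2^\top,\dots,e_n^\top$. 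Stacking with the $A$-row gives $\widetilde D P_1 = \mathrm{I}_n$.

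The only real content is the structural claim that in a tree rooted at vertex $1$, appending the edge $e(i,j)$ (with $i$ the parent of $j$) extends the root-path to $i$ into the root-path to $j$ by exactly one vertex; this is where the tree hypothesis (uniqueness of paths, $m=n-1$) is used, and it is the step I would state carefully rather than the index-chasing around it. Everything else is bookkeeping: that rooting at $1$ makes $\widetilde D$ lower triangular with unit diagonal (hence a priori invertible, giving an alternative one-line proof of invertibility), and that the edge-to-child-vertex correspondence is a bijection. I would present the argument as the single identity $\widetilde D P_1 = \mathrm{I}_n$ computed row by row as above.
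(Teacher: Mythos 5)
Your argument is correct and, unlike the paper's, is self-contained: the paper defers the formal proof to external references and only sketches a Gaussian-elimination intuition, which amounts to checking the product in the opposite order, $P_1\widetilde D=\mathrm{I}_n$ (summing, for each vertex $i$, the rows of $\widetilde D$ indexed by the vertices on the root-to-$i$ path telescopes to $e_i^\top$). You instead verify $\widetilde D P_1=\mathrm{I}_n$ row by row via the nesting of root-paths (path to child $=$ path to parent plus the child); since $\widetilde D$ is square, either one-sided check suffices, so the two routes are equivalent and rest on the same structural fact about trees, but yours actually carries out the computation. One caveat you should make explicit rather than leave implicit in the phrase ``tree graph'': the step ``$i<j$ implies $i$ is the parent of $j$'', and the identification of the edge-row $e(i,j)$ with the row index $j$ of the identity, both depend on the paper's standing conventions that vertices are enumerated outward from the root (so each child has a larger label than its parent, making $\widetilde D$ lower triangular with unit diagonal) and that edges are listed in the order of their child vertices. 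With the sign convention $-1$ at $\min(i,j)$ and $+1$ at $\max(i,j)$, an arbitrary labelling of a tree would make $\widetilde D P_1$ a signed permutation matrix with first row $e_1^\top$ rather than $\mathrm{I}_n$, so the theorem as literally stated needs that hypothesis; under the paper's conventions your proof is complete.
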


\begin{proof}[Proof of Theorem \ref{invroot}]
For a formal proof we refer to \cite{jaco08} and to \cite{bapa14}.
The intuition behind this theorem is to proceed as follows. We have to check that $\text{rank}(\widetilde{D})=n$. One can perform Gaussian elimination on the rooted incidence matrix. Keep the first row as it is and for row $i$ add up the rows indexed by the vertices belonging to the path going from vertex 1 to vertex $i$. In this way one can obtain an identity matrix and thus $\text{rank}(\widetilde{D})=n$. Similarly one can find the inverse, which obviously corresponds to $P_1$. 
\end{proof}


\begin{example}[Incidence matrix and path matrix with reference vertex 1 for the path graph]
Let $\mathcal{G}$ be the path graph with $n=6$ vertices. The incidence matrix is
\begin{equation*}
D=\begin{pmatrix*}[r]
-1&1& & & &  \\
 &-1&1& & &   \\
 & &-1&1& &   \\
 & & &-1&1&   \\
 & & & &-1&1  \\
 
\end{pmatrix*}
\in\R^{5\times 6}
\end{equation*}
and the path matrix with reference vertex 1 is
\begin{equation*}
X=\begin{pmatrix*}[r]
1& & & & &  \\
1&1& & & &  \\
1&1&1& & &  \\
1&1&1&1& &  \\
1&1&1&1&1&  \\
1&1&1&1&1&1\\

\end{pmatrix*}\in\R^{6\times 6}.
\end{equation*}
\end{example}

\begin{example}[Incidence matrix and path matrix with reference vertex 1 for the path graph with a branch]
Let $\mathcal{G}$ be the path graph with one branch. The graph has in total $n=n_1+n_2$ vertices. The main branch consists in $n_1$ vertices, the side branch in $n_2$ vertices and is attached to the vertex number $b<n_1$ of the main branch.
Take $n_1=4$, $n_2=2$ and $b=2$. The incidence matrix is
\begin{equation*}
D=\begin{pmatrix*}[r]

 -1&1& & & &  \\
  &-1&1& & &  \\
  & &-1&1& &  \\
&-1 & & &1&  \\
 & & & &-1&1 \\
\end{pmatrix*}\in\R^{5\times 6}
\end{equation*}
and the path matrix with reference vertex 1 is
\begin{equation*}
X=\begin{pmatrix*}[r]

1& & &  & & \\
1&1& &  & & \\
1&1&1&  & & \\
1&1&1&1&  & \\
1&1& & & 1& \\
1&1& &  &1&1\\
\end{pmatrix*}\in\R^{6\times 6}.
\end{equation*}
\end{example}

\subsubsection{Notation}\label{notation}
Here we expose the notational conventions used for handling the (branched) path graph and later branching points with arbitrarily many ($K$) branches.

\begin{itemize}
\item
\textbf{(Branched) path graph}

We decide to enumerate the vertices of the (branched) path graph starting from the root $1$, continuing up to the end of the main branch $n_1$ and then continuing from the vertex $n_1+1$ of the side branch attached to vertex $b$ up to the last vertex of the side branch $n=n_1+n_2$.

We are going to use two different notations: the one is going to be used for finding explicit expressions for quantities related to the projection of a column of $X$ onto some subsets of the columns of $X$. The other is going to be used when calculating the compatibility constant and is based on the decomposition of the (branched) path graph into smaller path graphs. In both notations we let the set $S\subseteq \{2, \ldots , n\} $ be a candidate set of active edges.

\bigskip

\textbf{First notation}

We partition $S$ into three mutually disjoint sets $S_1,S_2,S_3$, where $S_1\subseteq\{2,\ldots, b\}$, $S_2\subseteq\{b+1,\ldots, n_1\}$, $S_3\subseteq\{n_1+1,\ldots, n\}$.
We write the sets $S_1,S_2,S_3$ as:
\begin{equation*}
S_1=: \left\{i_1,\ldots, i_{s_1} \right\},
S_2=: \left\{j_1,\ldots, j_{s_2} \right\},
S_3=: \left\{k_1,\ldots, k_{s_3} \right\}.
\end{equation*}

Note that $s_i:=\abs{S_i}, i\in \{1,2,3 \}$ and $s:=\abs{S}=s_1+s_2+s_3$.

Let us write $S=\{\xi_1,\ldots, \xi_{s_1+s_2+s_3} \}$.  Define
\begin{eqnarray*}
B & = & \{ \xi_1 -1, \xi_2 -\xi_1, \xi_3 -\xi_2, \ldots, b-\xi_{s_1}+1,\\
& &\xi_{s_1+1}-b-1, \ldots, \xi_{s_1+s_2}-\xi_{s_1+s_2-1},  n_1-\xi_{s_1+s_2}+1, \\
& &\xi_{s_1+s_2+1}-n_1-1, \xi_{s_1+s_2+2} - \xi_{s_1+s_2+1}, \ldots, n-\xi_{s_1+s_2+s_3} +1 \}\\
&=:& \{b_{1},b_2, b_3, \ldots, b_{s_1+1},b_{s_1+2} ,\ldots, b_{s_1+s_2+1},b_{s_1+s_2+2},\\
& & b_{s_1+s_2+3},b_{s_1+s_2+4}, \ldots, b_{s_1+s_2+s_3+3} \}.
\end{eqnarray*}

Define $b^*:=b_{s_1+1}+b_{s_1+2}+b_{s_1+s_2+3}$.

In the case where we consider the path graph we simply take $S=S_1$ (i.e. $n=n_1$)

\bigskip

\textbf{Second notation} (for bounding the compatibility constant).

What is meant with this second notation is that we decompose the branched path graph into three smaller path graphs. However the end of the first one does not necessarily coincide with the point $b$ and the begin of the other two does not necessarily coincide with the points $b+1$ and $n_1+1$ respectively.

Let us write 
\begin{equation*}
S_1=\{d^1_1+1, d^1_1+d^1_2+1, \ldots, d^1_1+d^1_2+\ldots+ d^1_{s_1}+1 \}=S\cap\{1, \ldots, b\},
\end{equation*}
and
\begin{equation*}
S_i=\{p_i+1, p_i+d_2^i+1,p_i+d^i_2+d^i_3+1, \ldots, p_i+d^i_2+d^i_3+\ldots+ d^i_{s_i}+1  \}, i=2,3,
\end{equation*}
where, using the first notation introduced, $p_2=j_1-1$, $p_3=k_1-1$,  $d^2_{s_2+1}=n_1-\xi_{s_1+s_2}+1$ and $d^3_{s_3+1}=n-\xi_{s_1+s_2+s_3}+1$. Note that $b^*=d^1_{s_1+1}+d^2_1+d_1^3=b_{s_1+1}+ b_{s_1+2}+b_{s_1+s_2+3}$. 
We require, $\forall i$, $d^i_1\geq 2$,$d^i_j\geq 4, \forall j\in \{2, \ldots, s_i \}$, and $d^i_{s_i+1}\geq 2$. Let $u^i_j\in\mathbb{N}$ satisfy $2 \le u^i_j\le d^i_j-2$ for $j\in \{2, \ldots, s_i \}$ and $i\in \{1,2,3\}$.
The elements $d_{s_1+1}^1,d^2_1,d^3_1$ are only constrained by the fact that they have to be greater or equal than two, otherwise, for a given $S$ their choice is left free.

Moreover note that $\sum_{i=1}^3 \sum_{j=1}^{s_1+1}d_j^i=n$. We thus end up with three sequences of integers $\{d_j^i\}_{i=1}^{s_i}, i\in\{1,2,3\}$.

\begin{remark}
We can relate part of these sequences to the set $B$ defined in the first notation.
Indeed,
\begin{itemize}
\item
$ \{d^1_{i} \}_{i=1}^{s_1}=\{b_{i} \}_{i=2}^{s_1}$;
\item
$ \{d^2_{i} \}_{i=2}^{s_2+1}=\{b_{i} \}_{i=s_1+3}^{s_1+s_2+2}$;
\item
$ \{d^3_{i} \}_{i=2}^{s_3+1}=\{b_{i} \}_{i=s_1+s_2+4}^{s_1+s_2+s_3+3}$.
\end{itemize}

We see that the only place where there might be some discrepancy between the first and the second notation is at $d^1_{s_1+1},d^2_1,d^3_1$, which might be different from $b_{s_1+1},b_{s_1+2},b_{s_1+s_2+3}$.
\end{remark}

In the case of the path graph we just consider one single of these path graphs and thus $S=S_1$ and $s=s_1$ and we omit the index $i$.

\item
\textbf{Branching point with arbitrarily many branches}

In Sections \ref{sec3} and \ref{sec6} we are going to consider branching points participating in $K+1$ edges. In these cases we are ging go denote by $b_1$ the number of vertices between the ramification point and the last vertex in $S$ in the main branch, with these two extreme vertices included, and by $b_2, \ldots, b_{K+1}$ the number of vertices after the ramification point and before the first vertex in $S$ (or the end of the relative branch). In these more complex cases for the sake of simplicity we only consider situations where the first and second notation coincide. We are often going to restrict our attention to \textbf{``large enough''} general tree graphs. These can be seen as tree graphs composed of $g$ path graphs glued together at their \textbf{extremities} with $d^i_j\ge 4, \forall j \in \{1, \ldots, s_i+1 \}, \forall i \in \{1, \ldots, g \}$. The reason of these requirements will become clear in Sections \ref{sec5} and \ref{sec6}.
\end{itemize}

\subsection{Review of the literature}

While to our knowledge there is no attempt in the literature to analyze the specific properties of the total variation regularized least squares estimator over general branched tree graphs, there is a lot of work in the field of the so called Fused Lasso estimator.
An early analysis of the Fused Lasso estimator
can be found in \cite{mamm97-2}. Some other early work is exposed in \cite{tibs05,frie07,tibs11}, where also computational aspects are considered.

In the literature we can find two main currents of research, the one focusing on the pattern recovery properties (which is going to be quickly exposed in Section \ref{sec7}) and the other on the analysis of the mean squared error to prove oracle inequalities.

\subsubsection{Minimax rates}

In this subsection we expose some results on minimax rates, making use of the notation found in \cite{sadh16}.
In particular, let
\begin{equation*}
\mathcal{T}(C)=\left\{f\in\R^n: \norm{Df}_1\leq C \right\}
\end{equation*}
be the class of (discrete) functions of bounded total variation on the path graph, where $D$ is its incidence matrix. Assume the linear model with $f^0\in\mathcal{T}(C)$ for some $C>0$ and with iid Gaussian noise with variance $\sigma\in(0,\infty)$  .
It has been shown in \cite{dono98} that the minimax risk over the class of functions with bounded total variation $\mathcal{R}(\mathcal{T}(C))$ satisfies
\begin{equation*}
\mathcal{R}(\mathcal{T}(C)):=\inf_{\widehat{f}} \sup_{f^0\in\mathcal{T}(C)}\ex[\norm{\widehat{f}-f^0}_n^2]\asymp (C/n)^{2/3}.
\end{equation*}

\cite{mamm97-2} prove that, if $\lambda\asymp n^{-2/3} C^{1/3}$, then the Fused Lasso estimator
achieves the minimax rate within the class $\mathcal{T}(C)$. 
\cite{sadh16} also point out, that estimators which are linear in the observations can not achieve the minimax rate within the class of functions of bounded total variation, since they are not able to adapt to the spatially inhomogeneous smoothness of some elements of this class.

%
%

\subsubsection{Oracle inequalities}

We expose some recent results, appeared in the papers by \cite{hutt16,dala17,lin17b,gunt17}. In particular we give the rates of the remainder term in the (sharp) oracle inequalities holding with high probability exposed in these papers.
\begin{itemize}
\item \textbf{\cite{hutt16}} obtain a quite general result, in the sense that it applies to any graph $\mathcal{G}$ with incidence matrix $D\in\R^{m\times n}$. In particular for the choice of the tuning parameter $\lambda=\sigma\rho\sqrt{2\log\left({em}/{\delta}\right)}/n, \delta\in(0,\frac{1}{2})$,  they obtain the rate 
$$\bigo\left(\frac{\abs{S}\rho^2}{n\kappa^2_D(S)}\log\left({em}/{\delta} \right)\right),$$
where, for a set $S\subseteq [m]$,
\begin{equation*}
\kappa_D(S):=\inf_{f\in\R^n}\frac{\sqrt{\abs{S}}\norm{f}_2}{\norm{(Df)_S}_1}, S\not= \emptyset
\end{equation*}
is called \textbf{compatibility factor} and $\rho$ is the largest $\ell^2$-norm of a column of the Moore-Penrose pseudoinverse $D^+=(\delta^+_1,\ldots,d^+_m)\in\R^{n\times m}$ of the incidence matrix $D$, i.e. $\rho=\max_{j\in[m]}\norm{\delta^+_j}_2$, and is called \textbf{inverse scaling factor}.

For the path graph, we have $m=n-1$, $\rho\asymp \sqrt{n}$ and, according to Lemma 3 in \cite{hutt16}, $\kappa_D(S)=\Omega\left(1 \right)$, if $\abs{S}\geq 2$.

\item \textbf{\cite{dala17}} obtain that, $\forall S\not=\emptyset$, for $\delta\in(0,\frac{1}{2})$ and the choice of the tuning parameter $\lambda:=2\sigma \sqrt{{2}\log\left({n}/{\delta} \right)/n}$, the remainder term has rate

$$\bigo\left(\frac{s\log n}{W_{\min,S}}+\frac{s \log^2 n}{n} \right),$$
where $S=\left\{i_1,\ldots, i_s \right\}$, $s=\abs{S}$, $W_{\min,S}:=\min_{2\leq j\leq s}\abs{i_j-i_{j-1}}$.

\item \textbf{\cite{lin17b}} prove a result similar to the one of \cite{dala17} using a technique that they call lower interpolant. Their result states that the mean squared error of the Fused Lasso estimator with the choice of the tuning patameter $\lambda=n^{-\frac{3}{4}} W_{\min,S_0}^{\frac{1}{4}}$ has error rate

$$
\bigo\left(\frac{s_0}{n}\left((\log s_0+\log\log n)\log n +\sqrt{\frac{n}{W_{\min,S_0}}}\right)\right).
$$

\item\textbf{\cite{gunt17}} consider the sequence of estimators $\{\widehat{f}_\lambda,\lambda\geq 0 \}$, where
\begin{equation*}
\widehat{f}_\lambda=\arg\min_{f\in\R^n}\left\{\norm{Y-f}^2_2+2\sigma\lambda\norm{Df}_1 \right\},
\end{equation*}
and prove that, when the minimum length condition $W_{\min,S_0}\geq \frac{c n}{s_0+1}, c\geq 1, $
is satisfied, then with high probability
\begin{equation*}
\inf_{\lambda\geq 0} \norm{\widehat{f}_\lambda-f^0}^2_n=\bigo\left(  \frac{s_0+1}{n}\log\left(\frac{ne}{s_0+1} \right)
\right).
\end{equation*}


\end{itemize}
\section{Approach for general tree graphs}\label{sec2}

The approach we follow is very similar to the one presented in the proof of Theorem 3 of \cite{dala17}. However, we refine their proof by not penalizing the first coefficient of $\beta$ and by adjusting the definition of compatibility constant accordingly. Note that by not penalizing the first coefficient we allow it to be always active. This is a more natural approach to utilize, considered our problem definition.

Let $\beta\in\R^n$ be a vector of coefficients, $S\subseteq\{2,\ldots, n \}$ a subset of the indices of $\beta$, called active set with $s:=\abs{S}$ being its cardinality.

\begin{definition}[\textbf{Compatibility constant}]\label{defcc}
The compatibility constant $\kappa(S)$ is defined as
\begin{equation*}
\kappa^2(S):=\min\left\{(s+1)\norm{X\beta}^2_n: \norm{\beta_S}_1-\norm{\beta_{-(\{1\}\cup S)}}_1=1 \right\}.
\end{equation*}
\end{definition}

Let  $V_{\{1\}\cup S}$ denote the linear subspace of $\R^n$ spanned by the columns of $X$ with index in $\{1\}\cup S$. Let $\Pi_{\{1\}\cup S}$ be the orthogonal projection matrix onto $V_{\{1\}\cup S}$. We have that $\Pi_{\{1\}\cup S}=X_{\{1\}\cup S}(X_{\{1\}\cup S}'X_{\{1\}\cup S})^{-1}X_{\{1\}\cup S}'$.

\begin{definition}
The vector $\omega\in\R^n$ is defined as
\begin{equation*}
\omega_j=\frac{\norm{X_j'(\text{I}-\Pi_{\{1\}\cup S})}_2}{\sqrt{n}},\forall j \in [n].
\end{equation*}
\end{definition}

\begin{remark}
Note that $\omega_{\{1\}\cup S}=0$ and $0 \leq \omega\leq 1$ since for tree graphs the maximum $\ell^2$-norm of a column of $X$ is $\sqrt{n}$.
\end{remark}

\begin{definition}
Take $\gamma>1$.
The vector of  weights $w\in\R^n$ is defined as
\begin{equation*}
w_j=1-\frac{\omega_j}{\gamma}, \forall j \in [n].
\end{equation*}
\end{definition}

\begin{remark}
Note that $0\leq w \leq 1$ and that $w_{\{1\}\cup S}=1$.
\end{remark}

For two vectors $a,b\in\R^k$, $a\odot b:=(a_1b_1,a_2b_2,\ldots,a_kb_k)'$. 

\begin{definition}[\textbf{Weighted compatibility constant}]\label{defwcc}
The weighted compatibility constant $\kappa_w(S)$ is defined as
\begin{equation*}
\kappa_w^2(S):=\min\left\{(s+1)\norm{X\beta}^2_n: \norm{(w\odot\beta)_S}_1-\norm{(w\odot \beta)_{-(\{1\}\cup S)}}_1=1 \right\}.
\end{equation*}
\end{definition}

\begin{remark}
Note that the (weighted) compatibility constant depends on the graph through $X$, which is the path matrix of the graph rooted at the vertex 1.
\end{remark}

\begin{remark}
Note that a key point in our approach is the computation of a lower bound for the compatibility constant over the path graph, which is shown to be tight in some special cases. The concept of compatibility constant for total variation estimators over graphs is already presented in \cite{hutt16}. However, we refer to the (different) definition given in \cite{dala17}, which we slightly modify to adapt it to our problem definition. 
\end{remark}

\begin{theorem}[Oracle inequality for total variation regularized estimators over tree graphs]\label{t21}
Fix $\delta\in (0,1)$ and $\gamma>1$. \\
Choose $ \lambda={\gamma\sigma}\sqrt{2\log\left({4(n-s-1)}/{\delta} \right)/n}$.
Then, with probability at least $1-\delta$, it holds that
\begin{eqnarray*}
\norm{\widehat{f}-f^0}_n^2&\leq& \inf_{f\in\R^n } \left\{\norm{f-f^0}_n^2 + 4\lambda\norm{(Df)_{-S}}_1 \right\}\\
&+&\frac{4\sigma^2}{n}\left((s+1)+2\log\left({2}/{\delta}\right)+\frac{\gamma^2 (s+1)}{\kappa^2_w(S)} \log \left({4(n-s-1)}/{\delta}  \right) \right).
\end{eqnarray*}
\end{theorem}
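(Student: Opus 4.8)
The plan is to follow the standard ``basic inequality'' argument for the Lasso, adapted to the reparametrized problem $\widehat\beta_{-1}=\arg\min\{\|\widetilde Y-\widetilde X_{-1}\beta_{-1}\|_n^2+2\lambda\|\beta_{-1}\|_1\}$ in which the first coefficient is unpenalized, and then to translate everything back to $\widehat f=X\widehat\beta$. First I would fix an arbitrary competitor $f\in\R^n$ (equivalently $\beta=\widetilde D f$, so that $\|(Df)_{-S}\|_1=\|\beta_{-(\{1\}\cup S)}\|_1$) and write down the defining inequality $\|Y-X\widehat\beta\|_n^2+2\lambda\|\widehat\beta_{-1}\|_1\le\|Y-X\beta\|_n^2+2\lambda\|\beta_{-1}\|_1$. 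Expanding $Y=f^0+\epsilon$ and rearranging in the usual way yields
\begin{equation*}
\|\widehat f-f^0\|_n^2+2\lambda\|\widehat\beta_{-1}\|_1\le\|f-f^0\|_n^2+2\lambda\|\beta_{-1}\|_1+\frac{2}{n}\epsilon^\top(X\widehat\beta-X\beta).
\end{equation*}

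The next step is to control the empirical-process term $\frac{2}{n}\epsilon^\top X(\widehat\beta-\beta)$. Because the first coefficient is unpenalized, I would split the noise term into the part living in $V_{\{1\}\cup S}$ and the part orthogonal to it: on $V_{\{1\}\cup S}$ use $\frac{2}{n}|\epsilon^\top\Pi_{\{1\}\cup S}(X\widehat\beta-X\beta)|\le\frac14\|\Pi_{\{1\}\cup S}(X\widehat\beta-X\beta)\|_n^2+\frac{4}{n^2}\|\Pi_{\{1\}\cup S}\epsilon\|_2^2$, and bound $\|\Pi_{\{1\}\cup S}\epsilon\|_2^2$ by a $\chi^2_{s+1}$ tail (giving the $(s+1)+2\log(2/\delta)$ term); for the orthogonal part, write $X(\widehat\beta-\beta)=\sum_j X_j(\widehat\beta_j-\beta_j)$ and note $\epsilon^\top(\mathrm I-\Pi_{\{1\}\cup S})X_j$ is Gaussian with standard deviation $\sqrt n\,\omega_j\sigma$, so a union bound over the $n-s-1$ relevant coordinates gives, with probability $\ge1-\delta$, that $\frac{2}{n}|\epsilon^\top(\mathrm I-\Pi_{\{1\}\cup S})X(\widehat\beta-\beta)|\le\frac{2\lambda}{\gamma}\sum_j\omega_j|\widehat\beta_j-\beta_j|$ with the stated choice of $\lambda$. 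This is the step where the weights $w_j=1-\omega_j/\gamma$ and the factor $\gamma>1$ enter: combining the penalty terms $2\lambda\|\widehat\beta_{-1}\|_1-2\lambda\|\beta_{-1}\|_1$ with the $\frac{2\lambda}{\gamma}\omega$-weighted deviation bound produces exactly the weighted $\ell_1$ expression $\|(w\odot(\widehat\beta-\beta))_S\|_1-\|(w\odot(\widehat\beta-\beta))_{-(\{1\}\cup S)}\|_1$ on the left and $2\lambda\cdot 2\|\beta_{-(\{1\}\cup S)}\|_1$ on the right (after using the triangle inequality on the $S$-block and $\beta_{\{1\}\cup S}$ being the only unrestricted part).

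Having reached an inequality of the form
\begin{equation*}
\|\widehat f-f^0\|_n^2+2\lambda\Big(\|(w\odot(\widehat\beta-\beta))_S\|_1-\|(w\odot(\widehat\beta-\beta))_{-(\{1\}\cup S)}\|_1\Big)\le\|f-f^0\|_n^2+4\lambda\|(Df)_{-S}\|_1+\tfrac14\|\widehat f-X\Pi\beta\|_n^2+(\text{noise}),
\end{equation*}
I would case-split as in \cite{dala17}: if the weighted-$\ell_1$ quantity on the left is nonpositive the bound is immediate; otherwise apply the definition of the weighted compatibility constant $\kappa_w(S)$ with the rescaled vector $\widehat\beta-\beta$ to get $\|X(\widehat\beta-\beta)\|_n^2\ge\kappa_w^2(S)\,(\text{that quantity})^2/(s+1)$, then use $2ab\le a^2/4+4b^2$-type inequalities ($ab\le \tfrac14 a^2 + b^2$) to absorb the cross term into $\tfrac14\|\widehat f-f^0\|_n^2$ (and the projected piece into itself), leaving a residual of order $\lambda^2\gamma^2(s+1)/\kappa_w^2(S)$, i.e. the $\frac{\gamma^2(s+1)}{\kappa_w^2(S)}\log(4(n-s-1)/\delta)$ term after substituting $\lambda$. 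Collecting the $1/4$-fractions of $\|\widehat f-f^0\|_n^2$ and moving them to the left (so the coefficient becomes, say, $1$ after a factor-$4/3$ or by choosing constants slightly differently — I would simply tune the Young's-inequality constants so the final coefficients match the stated $4\lambda$ and $\frac{4\sigma^2}{n}$), and finally taking the infimum over $f$, gives the theorem.

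The main obstacle I anticipate is bookkeeping rather than a deep idea: getting the constants exactly right (the paper claims $4\lambda$ and a clean $4\sigma^2/n$ prefactor), which forces a careful choice of how much of $\|\widehat f-f^0\|_n^2$ is conceded in each Young's inequality and a careful handling of the projection $\Pi_{\{1\}\cup S}$ — in particular verifying that $\|\widehat f-X_{\{1\}\cup S}(\cdots)\|_n^2$ and $\|\widehat f-f^0\|_n^2$ can be combined without losing a factor, using that $\widehat f-f$ has its $V_{\{1\}\cup S}$-component essentially free because $\beta_1$ is unpenalized. The other delicate point is confirming that the union bound is genuinely over the $n-s-1$ indices outside $\{1\}\cup S$ (hence the $4(n-s-1)/\delta$ inside the log), which relies on $\omega_j=0$ for $j\in\{1\}\cup S$ so those coordinates contribute nothing to the weighted deviation term. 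Everything else — the $\chi^2$ tail bound, the Gaussian maximal inequality, and the compatibility-constant step — is standard once the decomposition is set up.
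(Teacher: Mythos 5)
Your overall architecture matches the paper's: the split of the noise into its $\Pi_{\{1\}\cup S}$ and $(\mathrm{I}-\Pi_{\{1\}\cup S})$ components, the event $F$ with a union bound over the $n-s-1$ indices outside $\{1\}\cup S$ (using $\omega_j=0$ on $\{1\}\cup S$), the emergence of the weights $w_j=1-\omega_j/\gamma$ from combining the penalty with the deviation bound, the $\chi^2_{s+1}$ tail for $\norm{\Pi_{\{1\}\cup S}\epsilon}_2^2$, and the use of the weighted compatibility constant are all exactly as in Appendix \ref{appE}.

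There is, however, a genuine gap at the very first step, and it is not mere bookkeeping. You start from the two-point inequality $\norm{Y-X\widehat\beta}_n^2+2\lambda\norm{\widehat\beta_{-1}}_1\le\norm{Y-X\beta}_n^2+2\lambda\norm{\beta_{-1}}_1$, which after rearranging gives only $\norm{X(\widehat\beta-\beta^0)}_n^2\le\norm{X(\beta-\beta^0)}_n^2+\cdots$. The paper instead starts from the KKT conditions, multiplies them by $\widehat\beta$ and by $\beta$, subtracts, and uses polarization, which yields the strictly stronger inequality with the additional term $\norm{X(\widehat\beta-\beta)}_n^2$ on the \emph{left-hand side}. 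That extra term is what the paper uses to cancel, exactly and with no loss, the $\norm{X(\widehat\beta-\beta)}_n^2$ produced by the single convex-conjugate step $2ab\le a^2+b^2$ applied to $2\norm{X(\widehat\beta-\beta)}_n\bigl(\norm{\Pi_{\{1\}\cup S}\epsilon}_2/\sqrt{n}+\lambda\sqrt{s+1}/\kappa_w(S)\bigr)$. In your version that term is absent, so your Young's-inequality absorptions must dump fractions of $\norm{\widehat f-f}_n^2$ into $\norm{\widehat f-f^0}_n^2$ via the triangle inequality; after moving those fractions to the left and renormalizing, the coefficient of $\norm{f-f^0}_n^2$ necessarily exceeds $1$ and the remainder constants are inflated. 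No tuning of the Young constants recovers the sharp constant $1$ and the clean $4\sigma^2/n$ prefactor of the stated theorem from the weaker starting inequality; you need the KKT/polarization identity (or equivalently the strong convexity of the quadratic loss at the minimizer) as the point of departure. With that single substitution, the rest of your argument goes through as you describe.
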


\begin{proof}[Proof of Theorem \ref{t21}]
See Appendix \ref{appE}
\end{proof}

\section{Calculation of projection coefficients and lengths of antiprojections, a local approach}\label{sec3}

In this section we are going to present an easy and intuitive way of calculating (anti-)projections and the related projection coefficients of the column of a path matrix rooted at vertex 1 of a tree onto a subset of the column of the same matrix. Let this matrix be called $X$. These calculations are motivated by the necessity of finding explicit expressions for the length of the antiprojections (for the weighted compatibility constant) and for the projection coefficients (to check for which signal patterns the irrepresentable condition is satisfied).

In particular consider the task of projecting a column $X_j, j\not \in \{1\}\cup S$ onto $X_{\{1\}\cup S}$. This can be seen as finding the following argmin:

$$
\hat{\theta}^j:=\arg\min_{\theta^j\in\R^{s+1}}\norm{X_j-X_{\{1\}\cup S}\theta^j}^2_2.
$$

We see that
\begin{itemize}
\item ${\hat{\theta}^j}'$ corresponds to the $j^{\text{th}}$ row of $X'X_{\{1\}\cup S}(X_{\{1\}\cup S}'X_{\{1\}\cup S})^{-1}$;
\item $\norm{X_j-X_{\{1\}\cup S}\hat{\theta}^j}^2_2=n\omega^2_j$.
\end{itemize}

The direct computation of these quantities can be quite laborious. Here, we show an easier way to compute these projections and we prove that they can be computed ``locally'', i.e. taking into account only some smaller part of the graph.

We start by considering the path graph. Then we treat the more general situation of ``large enough'' tree graphs.

\subsection{Path graph}

Let $j\not\in \{1\}\cup S$ be the index of a column of $X$ that we want to project onto $X_{\{1\}\cup S}$.
Define
\begin{equation}\label{jm}
j^- :=\max\left\{ i<j, i \in \{1\}\cup S \right\},
\end{equation}
\begin{equation}\label{jp}
j^+ :=\min\left\{ i>j, i \in \{1\}\cup S\cup \{ n+1\} \right\},
\end{equation}
and denote their indices inside $ \{1\}\cup S\cup \{ n+1\}=\{i_{1}, \ldots, i_{s+2} \}$ by $l^-$ and $l^+$, i.e. $j^-=i_{l^-}$ and $j^+=i_{l^+}$. We use the convention $X_{n+1}=0\in\R^n$.  We are going to show that the projection of $X_j$ onto $X_{\{1\}\cup S}$ is the same as its projection onto $X_{\{j^-\}\cup \{j^+ \}}$. This means that the part of the set $\{1\}\cup S$ not bordering with $j$ can be neglected.

The intuition behind this insight can be clarified as follows. Projecting $X_j$ onto $X_{\{1\}\cup S}$ amounts to finding the projection coefficients $\hat{theta}^j$ minimizing the length of the antiprojection. The projection is then $X_{\{1\}\cup S}\hat{\theta}^j$.
Since the columns of $X_{\{1\}\cup S}$ can be seen as indicator functions on $[n]$, this projection problem can be interpreted as the problem of finding the least squares approximation to $1_{\{i\ge j\}}$ by using functions in the class $\left\{1_{\{i\ge j^* \}}, j^*\in  \{1\}\cup S \right\}$.

We now apply a linear transformation in order to obtain orthogonal desing. Note that $\text{I}_{s+1}= \tilde{D}^{(s+1)}X^{(s+1)}$, where $\tilde{D}^{(s+1)}$ is the incidence matrix of a path graph with $s+1$ vertices rooted at vertex 1 and $X^{(s+1)}$ is its inverse, i.e. the corresponding rooted path matrix. We get that
\begin{equation*}
\min_{\theta^j\in\R^{s+1}}\norm{X_j-X_{\{1\}\cup S}\theta^j}^2_2=\min_{\tau^j\in\R^{s+1}}\norm{X_j-X_{\{1\}\cup S}\tilde{D}^{(s+1)} \tau^j}^2_2,
\end{equation*}
where $\tau^j=X^{(s+1)}\theta^j$, i.e. the progressively cumulative sum of the components of $\theta^j$ and $X_{\{1\}\cup S}\tilde{D}{(s+1)}\in\R^{n\times{(s+1)}}$ is a matrix containing as columns the indicator functions $\left\{1_{\{i_l\le i < i_{l+1} \}}, l\in\{1, \ldots, s+1 \} \right\}$, which are pairwise orthogonal.
Because of the orthogonality of the design matrix, we can now solve $s+1$ separate optimization problems to find the components of $\hat{\tau}^j$.
It is clear that, to  minimize the sum of squared residuals (i.e. the length of the antiprojection), $\hat{\tau}^j$ must be s.t.
$$ \{\hat{\tau}^j_i \}_{i<l^-}=0 \text{ and } \{\hat{\tau}^j_i \}_{i\ge l^+}=1.$$
It now remains to find $\hat{\tau}^j_{l^-}$ by solving
$$
\hat{\tau}^j_{l^-}=\arg\min_{x\in\R}\left\{(j-j^-)x^2+ (j^+-j)(1-x)^2 \right\}= \frac{j^+-j}{j^+-j^-}= 1-\frac{j-j^-}{j^+-j^-}.
$$
We see that, to get this projection coefficient, we either need to know $j^+$ and $j^-$ or the information on the length of the constant segment in which $j$ lies with its position within this segment. Thus we obtain that
$$
\hat{\tau}^j=\begin{pmatrix} 0 \\ \vdots \\ 0 \\  \frac{j^+-j}{j^+-j^-} \\ 1 \\ \vdots \\ 1 \end{pmatrix} \text{ and } \hat{\theta}^j=\begin{pmatrix} 0 \\ \vdots \\ 0 \\  \frac{j^+-j}{j^+-j^-} \\ \frac{j-j^-}{j^+-j^-}\\ 0 \\ \vdots \\ 0  \end{pmatrix},
$$

and have proved the following Lemma.

\begin{lemma}[Localizing the projections]\label{l61}
Let $X$ be the path matrix rooted at vertex 1 of a path graph with $n$ vertices and $S\subseteq \{ 2, \ldots, n \}$. For $j\not\in \{1\}\cup S$ define $j^-$ and $j^+$ as in Equations (\ref{jm}) and (\ref{jp}). Then
$$
\min_{\theta^j\in\R^{s+1}}\norm{X_j-X_{\{1\}\cup S}}^2_2= \min_{\tilde{\theta}^j\in\R^2}\norm{X_j-X_{\{j^-\}\cup \{ j^+\}}\tilde{\theta}^j}^2_2,
$$
i.e. the (length of the) (anti-)projections can be computed in a ``local'' way.

Moreover by writing $A_{\{1\}\cup S}=\text{I}_n-\Pi_{\{1\}\cup S}$ we have that
$$
\norm{A_{\{1\}\cup S}X_j}^2_2= \frac{(j^+-j)(j-j^-)}{(j^+-j^-)}.
$$

Furthermore, for $j<i_{s}, j\not \in \{1\}\cup S$, the sum of the entries of $\hat{\theta}^j$ is 1.
\end{lemma}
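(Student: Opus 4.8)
The plan is to reduce the projection of $X_j$ onto the full set $X_{\{1\}\cup S}$ to a projection onto the two "neighbouring" columns $X_{j^-}$ and $X_{j^+}$, using the orthogonal reparametrization that is already set up in the text just before the lemma. First I would invoke the identity $\mathrm{I}_{s+1}=\widetilde{D}^{(s+1)}X^{(s+1)}$ to rewrite $\min_{\theta^j}\norm{X_j-X_{\{1\}\cup S}\theta^j}_2^2=\min_{\tau^j}\norm{X_j-M\tau^j}_2^2$, where $M:=X_{\{1\}\cup S}\widetilde{D}^{(s+1)}$ has as its columns the indicator vectors $1_{\{i_\ell\le i<i_{\ell+1}\}}$ for $\ell\in\{1,\dots,s+1\}$ (with $i_{s+2}:=n+1$). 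These columns have disjoint supports, hence are pairwise orthogonal, so the least-squares problem decouples into $s+1$ scalar problems. On each block $[i_\ell,i_{\ell+1})$ the target $X_j=1_{\{i\ge j\}}$ is either identically $0$ (blocks entirely left of $j$, i.e. $\ell<l^-$), identically $1$ (blocks entirely at or right of $j$, i.e. $\ell\ge l^+$), or — on the single block containing $j$, namely $\ell=l^-$ — equal to $1$ on $j-j^-$ coordinates and $0$ on $j^+-j$ coordinates. This gives $\hat\tau^j_\ell=0$ for $\ell<l^-$, $\hat\tau^j_\ell=1$ for $\ell\ge l^+$, and $\hat\tau^j_{l^-}=\arg\min_{x}\{(j-j^-)x^2+(j^+-j)(1-x)^2\}=(j^+-j)/(j^+-j^-)$.

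Next I would transfer this back to the original parametrization via $\theta^j=\widetilde{D}^{(s+1)}\tau^j$ (taking successive differences of $\hat\tau^j$): only two consecutive entries of $\hat\theta^j$ are nonzero, namely the one corresponding to $j^-$, equal to $(j^+-j)/(j^+-j^-)$, and the one corresponding to $j^+$, equal to $(j-j^-)/(j^+-j^-)$ — all other coordinates vanish. Since the nonzero coordinates of $\hat\theta^j$ sit exactly at the indices $j^-$ and $j^+$, the optimizer lies in the span of $X_{j^-}$ and $X_{j^+}$, which proves the claimed equality of the two minima. (One subtlety: if $j^+=n+1$ the corresponding "column" is $X_{n+1}=0$ by convention, so the projection collapses to the single column $X_{j^-}$ with coefficient $1$; this is consistent with the formula, and the statement about the sum of entries of $\hat\theta^j$ being $1$ is then restricted to $j<i_s$, i.e. $j^+\le n$, exactly as stated.)

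For the antiprojection length, I would evaluate the residual sum of squares at the optimum: all blocks except $\ell=l^-$ contribute $0$, and on the block containing $j$ the residual is $(j-j^-)(1-\hat\tau^j_{l^-})^2+(j^+-j)(\hat\tau^j_{l^-})^2$. Substituting $\hat\tau^j_{l^-}=(j^+-j)/(j^+-j^-)$ gives, after simplification, $(j-j^-)(j^+-j)/(j^+-j^-)$, which is the stated expression for $\norm{A_{\{1\}\cup S}X_j}_2^2$. Finally, the statement that $\hat\theta^j$ sums to $1$ for $j<i_s$ is immediate from $\sum_k\hat\theta^j_k=(j^+-j)/(j^+-j^-)+(j-j^-)/(j^+-j^-)=1$ whenever $j^+\le n$, i.e. both neighbouring indices are genuine columns of $X$.

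The only real thing to be careful about — rather than a genuine obstacle — is the bookkeeping of the index sets and the boundary conventions: identifying the block structure of $M$ correctly, handling the $j^+=n+1$ edge case via $X_{n+1}=0$, and keeping the distinction between the "local" index $l^\pm$ within $\{1\}\cup S\cup\{n+1\}$ and the actual vertex label $j^\pm$. The orthogonality of the design after reparametrization does all the heavy lifting, so once that is in place the rest is elementary algebra.
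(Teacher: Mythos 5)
Your proof is correct and follows essentially the same route as the paper: the lemma is proved there by exactly the orthogonalizing reparametrization $\tau^j=X^{(s+1)}\theta^j$ you describe, which decouples the least-squares problem into $s+1$ scalar problems over disjoint blocks, leaving only the block containing $j$ nontrivial. One transcription slip worth fixing: your residual expression $(j-j^-)(1-\hat\tau^j_{l^-})^2+(j^+-j)(\hat\tau^j_{l^-})^2$ has the two weights swapped relative to the objective you actually minimized (and to the prose description of where the target is $0$ versus $1$) — it should read $(j-j^-)(\hat\tau^j_{l^-})^2+(j^+-j)(1-\hat\tau^j_{l^-})^2$, which then does simplify to $(j^+-j)(j-j^-)/(j^+-j^-)$ as claimed.
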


\subsection{General branching point}\label{s32}

Using arguments similar to the ones above we can now focus on a ramification point of a general tree graph. Let us consider $K$  path graphs attached at the end of one path graph (which we assume to contain the root). The path matrix rooted at the first vertex is
$$
X= \begin{pmatrix}
X^{(b_1)}  &    &    &   \\
1  &  X^{(b_2)}  &   &  \\
\vdots  &   &  \ddots  &  \\
1  &   &   &  X^{(b_{K+1})}
\end{pmatrix}
$$
and we want to find the projections of $X_{-1}$ onto $X_{1}=(1, \ldots, 1)'$. The entries $X^{(b_i)}, i \in \{1, \ldots, K+1\}$ of the matrix $X$ are $b_i\times b_i$ lower triangular matrices of ones. Let $b^*=\sum_{i=1}^{K+1}b_j$.
Let us write $j=1+i, i\in \{1, \ldots, b_1-1 \}$ and $j=\sum_{l=1}^{i^*}b_l-i, i\in \{1, \ldots, b_l \}, l\in \{2, \ldots, K+1 \}$. Without loss of generality we can consider only one $i^*\in \{2, \ldots, K+1 \}$.
We now consider two cases $l=1$ and $l\not=1$.

\begin{itemize}
\item First case: $l=1$.\\
We have
$$
\hat{\tau}^j_1=\arg\min_{x\in\R}\left\{ix^2+ (b^*-i)(1-x)^2 \right\}= 1-\frac{i}{b^*}
$$
and
$$
\norm{A_{\{1\}\cup S} X_{\sum_{i=1}^{i^*}b_i-j+1}}^2_2= \frac{i(b^*-i)}{b^*}, 1\le i \le b_1-1.
$$
\item Second case: $l\not= 1$.\\
We have
$$
\hat{\tau}^j_i=\arg\min_{x\in\R}\left\{i(1-x)^2+ (b^*-i)x^2 \right\}= \frac{i}{b^*}
$$
and
$$
\norm{A_{\{1\}\cup S} X_{j}}^2_2= \frac{i(b^*-i)}{b^*}, 1\le i \le b_l.
$$
\end{itemize}

Note that in the last region before the end of one branch, the approximation of the indicator function we implicitely calculate does not have to jump up to one and thus only one coefficient of the respective $\hat{\theta}^j$ will be nonzero and this coefficient will be smaller than one.

Now we focus on the case, where each of the branches (path graphs) involved in a ramification, presents at least one jump (i.e. one element of the set $S$). The length of the antiprojections is calculated in the same way as above. According to the arguments exposed in precedence, we can consider only the jumps surrounding the ramification point. Let us call them $j_1,j_2, \ldots, j_{k+1}$. We have to find
\begin{eqnarray*}
\hat{\theta}^j & = & \arg\min_{\theta^j\in \R^{s+1}}\norm{X_j-X_{\{1\}\cup S}\theta^j}^2_2\\
 & = &  \arg\min_{\tilde{\theta}^j\in\R^{K+1}} \norm{X_j-X_{\{j_1\}\cup \ldots \cup \{j_{K+1}\}}\tilde{\theta}^j}^2_2\\
 & = &  \arg\min_{\tilde{\theta}^j\in\R^{K+1}}\norm{X_j-X_{\{j_1\}\cup \ldots \cup \{j_{K+1}\}}D^{\star}X^{\star} \tilde{\theta}^j}^2_2,
\end{eqnarray*}
where
$$
D^{\star}=\begin{pmatrix}
1  &   &   &  \\
-1  &  1  &   &  \\
\vdots  &    &  \ddots  &  \\
-1  &    &   &  1 \\
\end{pmatrix}
\in\R^{(K+1)\times (K+1)}\text{ and }X^{\star}=\begin{pmatrix}
1  &   &   &  \\
1  &  1  &   &  \\
\vdots  &    &  \ddots  &  \\
1  &    &   &  1 \\
\end{pmatrix}
\in\R^{(K+1)}
$$
are respectively the rooted incidence matrix of a star graph with $(K+1)$ vertices and its inverse.

Let us write $j=j_1+i, i\in \{1, \ldots, b_1-1 \}$ and $j=j_l-i, i\in \{1, \ldots, b_l \}, l\in \{2, \ldots, K+1 \}$. Now let
$$
\hat{\tau}^j=\arg\min_{\tau^j\in \R^{K+1}}\norm{X_j-X_{\{j_1\}\cup \ldots \cup \{j_{K+1}\}}D^{\star}\tau^j}^2_2.
$$
We now consider two cases: $l=1$ and $l\not=1$.
\begin{itemize}
\item First case: $l=1$.\\
We have
\begin{align*}
\hat{\tau}^j_1 & =1-\frac{i}{b^*}\\
\hat{\tau}^j_l & =1,l=\{2, \ldots, K+1 \}, \\
\end{align*}
which translates into
\begin{align*}
\hat{\theta}^j_1 & =1-\frac{i}{b^*}\\
\hat{\theta}^j_l & =\frac{i}{b^*},l=\{2, \ldots, K+1 \}. \\
\end{align*}

\item Second case: $l\not=1$.\\
For $l=l'\not= 1$ we have
\begin{align*}
\hat{\tau}^j_1 & =\frac{i}{b^*}\\
\hat{\tau}^j_l & =0,l\in \{2, \ldots, K+1 \}\setminus \{l'\},\\
\hat{\tau}^j_l & =1,l=l', \\
\end{align*}
which translates into
\begin{align*}
\hat{\theta}^j_1  &  =\frac{i}{b^*}\\
\hat{\theta}^j_l & =-\frac{i}{b^*},l\in\{2, \ldots, K+1 \}\setminus \{l'\},\\
\hat{\theta}^j_l & =1-\frac{i}{b^*},l=l'. \\
\end{align*}
\end{itemize}

\section{Path graph}\label{sec4}

\subsection{Compatibility constant}

In this section we assume $\mathcal{G}$ to be the path graph with $n$ vertices.
We give two lower bounds for the compatibility constant for the path graph with and without weights. The proofs are postponed to the Appendix \ref{appB}, where we present some elements  that allow extension to the branched path graph and to more general tree graphs as well. These bounds are presented in a paper by \cite{vand18} as well. We use the second notation exposed in Subsection \ref{notation}.


\begin{lemma}[Lower bound on the compatibility constant for the path graph, part of Theorem 6.1 in \cite{vand18}]\label{l31}
For the path graph it holds that
\begin{equation*}
\kappa^2(S)\geq \frac{s+1}{ n}\frac{1}{K},
\end{equation*}
where
\begin{equation*}
K={1 \over d_1} + \sum_{j=2}^s \left({1 \over u_j}+ {1 \over d_j-u_j} \right) + {1 \over d_{s+1} }.
\end{equation*}
\end{lemma}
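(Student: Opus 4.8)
The starting point is Definition~\ref{defcc}: we must lower-bound $(s+1)\norm{X\beta}_n^2$ over all $\beta$ satisfying the constraint $\norm{\beta_S}_1 - \norm{\beta_{-(\{1\}\cup S)}}_1 = 1$. Since $X = P_1$ is the rooted path matrix, the vector $f := X\beta$ has increments $Df$ supported according to the nonzero pattern of $\beta_{-1}$, and $\norm{X\beta}_n^2 = \frac1n\norm{f}_2^2$. The idea is to reduce to a one-dimensional least-squares problem on each constant stretch of $f$. I would first use the localization machinery of Section~\ref{sec3} (Lemma~\ref{l61}): project each "off-$S$" column onto the neighbouring active columns, so that the antiprojection lengths are exactly $\frac{(j^+-j)(j-j^-)}{j^+-j^-}$. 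Equivalently, I would decompose $[n]$ into the blocks determined by $S\cup\{1\}$, whose lengths are $d_1, d_2, \dots, d_s, d_{s+1}$ in the second notation, and split $\norm{f}_2^2$ as a sum of contributions, one per block.

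**Key steps.**
First, rewrite the constraint. The $\ell_1$ terms $\norm{\beta_S}_1$ and $\norm{\beta_{-(\{1\}\cup S)}}_1$ can each be distributed over the blocks; by the triangle inequality and the freedom in choosing the split points $u_j$ (with $2\le u_j\le d_j-2$), I would show that the constraint forces a nontrivial amount of "oscillation" of $f$ somewhere. Second, on a single block of length $d$ with boundary values fixed, $\min\frac1n\norm{f_{\text{block}}}_2^2$ over the relevant affine family is a convex quadratic whose minimizer I can compute explicitly — this is exactly the $\arg\min_{x}\{(j-j^-)x^2 + (j^+-j)(1-x)^2\}$ computation already carried out in Section~\ref{sec3}, giving a contribution proportional to a harmonic-type quantity like $\frac{1}{1/u_j + 1/(d_j-u_j)}$. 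Third, I would assemble these block contributions: the worst case (smallest $\norm{X\beta}_n^2$ for the given normalization) puts "mass" on the block where the reciprocal-length sum is largest, and minimizing over $\beta$ subject to the normalization yields $\norm{X\beta}_n^2 \ge \frac{1}{nK}$ with $K = \frac1{d_1} + \sum_{j=2}^s\bigl(\frac1{u_j} + \frac1{d_j-u_j}\bigr) + \frac1{d_{s+1}}$; multiplying by $s+1$ gives the claim. The endpoint blocks contribute only $1/d_1$ and $1/d_{s+1}$ (a single term each) because $f$ need not "return" to a boundary value there — the first coefficient being unpenalized and column $X_{n+1}=0$ handle the two ends asymmetrically.

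**Main obstacle.**
The delicate part is handling the sign structure in the constraint $\norm{\beta_S}_1 - \norm{\beta_{-(\{1\}\cup S)}}_1 = 1$: the minimizing $\beta$ wants to make the penalized-but-off-$S$ coordinates large (they help the constraint while one might hope they cost little in $\norm{X\beta}_n$), so I must show that any such "cheating" mass actually increases $\norm{f}_2^2$ enough — this is precisely where the antiprojection lengths $\frac{(j^+-j)(j-j^-)}{j^+-j^-}$ and the constraint $u_j\ge 2$, $d_j - u_j \ge 2$ enter, preventing the block lengths from degenerating. A secondary technical point is justifying that it suffices to consider $f$ piecewise constant with jumps only at $S$ (i.e., that the worst case doesn't exploit jumps off $S$ in a more clever way); this follows from the projection/localization lemma but needs to be stated carefully. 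I would expect the bulk of the write-up, and hence the proof in Appendix~\ref{appB}, to be the careful block-by-block bookkeeping rather than any single hard inequality.
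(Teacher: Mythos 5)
Your decomposition of $[n]$ into blocks of sizes $d_1,u_2,d_2-u_2,\dots,u_s,d_s-u_s,d_{s+1}$ and the final shape of the bound are right, but the engine of the proof is missing. The paper's argument (Theorem 6.1 in \cite{vand18}, outlined on a toy example in Appendix \ref{int}) does not solve any block-wise quadratic minimization: it proves the \emph{inequality} $\norm{\beta_S}_1-\norm{\beta_{-(\{1\}\cup S)}}_1\le \sqrt{K}\,\norm{f}_2$ for every $f=X\beta$, which immediately forces $\norm{f}_2^2\ge 1/K$ on the constraint set. The key ingredient is the deterministic Lemma \ref{lab3}: for $f$ restricted to two disjoint connected blocks flanking an active edge, the jump across that edge minus the total variations inside the two blocks is at most the sum of the block averages of $\abs{f_i}$. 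Applying this once per active edge (left block of size $d_1$ or $d_j-u_j$, right block of size $u_{j+1}$ or $d_{s+1}$) and simply dropping the edges that connect consecutive modules converts the entire constraint --- \emph{including} the negative term $-\norm{\beta_{-(\{1\}\cup S)}}_1$, whose summands are exactly the internal total variations plus the dropped connecting edges --- into $1\le\sum_{j}\frac{1}{b_j}\sum_{i\in B_j}\abs{f_i}$, and a single Cauchy--Schwarz step gives $\sqrt{K}\norm{f}_2$. So your ``main obstacle'' (the off-$S$ mass) is resolved by this elementary inequality, not by antiprojection lengths; the requirements $u_j\ge 2$, $d_j-u_j\ge 2$ are there only so that each flanking block is a nondegenerate path graph and the modules are disjoint.

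Two further problems with your route. First, Lemma \ref{l61} and the antiprojection lengths $\frac{(j^+-j)(j-j^-)}{j^+-j^-}$ play no role in this lemma --- they enter only through the weights $w$ in the weighted bound of Lemma \ref{l33} --- and your block-wise minimization ``with boundary values fixed'' is not well posed: the boundary values of all blocks are coupled through the single non-convex normalization $\norm{\beta_S}_1-\norm{\beta_{-(\{1\}\cup S)}}_1=1$, so the minimization would have to be carried out jointly over all levels and all off-$S$ jumps, which your sketch does not do. Second, your assembly step is wrong as stated: the extremizer does not concentrate mass on one block; the tight example $f^*$ of Corollary \ref{c32} spreads mass over all blocks with levels proportional to $1/d_j$, which is precisely the equality case of the Cauchy--Schwarz step above.
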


\begin{proof}[Proof of Lemma \ref{l31}]
See Appendix \ref{appB}.
\end{proof}

\begin{Coro}[The bound can be tight, part of Theorem 6.1 in \cite{vand18}]\label{c32}
Assume $d_j$ is even $\forall j \in \{2,\ldots, s \}$. Then we can take $u_j = d_j / 2$. Let us now define $f^{*}\in\R^n$ by
\begin{equation*}
f^{*}_i=\begin{cases} -{n \over d_1} & i=1 , \ldots , d_1 \cr 
   {2n \over d_2} & i=d_1+1 , \ldots , d_1 + d_2 \cr  
   \vdots & \ \cr (-1)^s {2n \over d_s} & i= \sum_{j=1}^{s-1} d_j +1 , \ldots ,\sum_{j=1}^s d_j  \cr 
   (-1)^{s+1} {n \over d_{s+1} } & i= \sum_{j=1}^s d_j +1 , \ldots , n \cr \end{cases} .
\end{equation*}
Let $\beta^{*}$ be defined by $f^{*}=X\beta^{*}$.
Then
\begin{equation*}
\kappa^2(S)= \frac{s+1}{n }\frac{1}{K} ,
\end{equation*}
where
\begin{equation*}
K={1 \over d_1} + \sum_{j=2}^{s} {4 \over d_j} + {1 \over d_{s+1} } .
\end{equation*}
\end{Coro}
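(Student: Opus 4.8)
The plan is to verify that the specific $f^*$ (equivalently $\beta^*$) makes the lower bound of Lemma \ref{l31} tight, i.e. that it is a minimizer of the compatibility problem and that the value of $(s+1)\norm{Xf^*}_n^2$ attained (after normalization of the constraint) equals $\tfrac{s+1}{n}\cdot\tfrac{1}{K}$ with $K = 1/d_1 + \sum_{j=2}^s 4/d_j + 1/d_{s+1}$. The first observation is that with the choice $u_j = d_j/2$ the general $K$ appearing in Lemma \ref{l31} collapses: $1/u_j + 1/(d_j-u_j) = 2/(d_j/2) = 4/d_j$, so the displayed $K$ here is just a specialization. Hence it suffices to exhibit a feasible $\beta$ for the constraint in Definition \ref{defcc} that achieves the bound of Lemma \ref{l31}, because by that lemma the bound is a valid lower bound, and producing a matching feasible point proves equality.

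First I would record the structure of $f^*$: it is piecewise constant on the $s+1$ blocks determined by the cut points $\sum_{j=1}^\ell d_j$, with block values $c_\ell$ equal to $-n/d_1$, $\pm 2n/d_j$ on the interior blocks, and $(-1)^{s+1} n/d_{s+1}$ on the last block. The corresponding $\beta^* = \widetilde D f^*$ has $\beta^*_1 = f^*_1 = -n/d_1$ and, for $i\ge 2$, $\beta^*_i = f^*_i - f^*_{i-1}$, which is nonzero precisely at the jump locations, i.e. on $S$ (here $S$ is the set of block boundaries, consistent with the second notation where $S = \{d^1_1+1, d^1_1+d^1_2+1,\ldots\}$); thus $\beta^*_{-(\{1\}\cup S)} = 0$. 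So the constraint $\norm{\beta_S}_1 - \norm{\beta_{-(\{1\}\cup S)}}_1 = 1$ reduces, after rescaling $f^*$ by a positive constant, to $\sum_{\text{jumps}} \abs{c_\ell - c_{\ell-1}} = 1$. Next I would compute $\norm{Xf^*}_n^2 = \norm{f^*}_n^2 = \tfrac1n \sum_\ell d_\ell c_\ell^2$: each interior block contributes $d_j \cdot (2n/d_j)^2 = 4n^2/d_j$, the first block contributes $d_1 (n/d_1)^2 = n^2/d_1$, the last $n^2/d_{s+1}$, giving $\norm{f^*}_n^2 = n\bigl(1/d_1 + \sum_{j=2}^s 4/d_j + 1/d_{s+1}\bigr) = nK$. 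Separately I would compute the jump sum $T := \sum_{\text{jumps}}\abs{c_\ell - c_{\ell-1}}$; because the signs alternate, consecutive block values have opposite sign, so $\abs{c_\ell - c_{\ell-1}} = \abs{c_\ell} + \abs{c_{\ell-1}}$, and summing telescopically each interior $|c_j| = 2n/d_j$ is counted twice while the two end values $n/d_1$ and $n/d_{s+1}$ are counted once; hence $T = 2\bigl(1/d_1 + \sum_{j=2}^s 4/d_j \cdot \tfrac{d_j}{2d_j}\cdots\bigr)$ — more cleanly, $T = \tfrac{2n}{d_1} + \sum_{j=2}^s \tfrac{4n}{d_j} + \tfrac{2n}{d_{s+1}}$? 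I would double-check this arithmetic, but the point is that $T = 2nK$ up to the boundary bookkeeping, and in any case $T$ is a fixed positive multiple of $nK$; call it $T = cnK$. Rescaling $f^*$ by $1/T$ to meet the constraint, the objective becomes $(s+1)\norm{f^*/T}_n^2 = (s+1)\, nK / (cnK)^2 = (s+1)/(c^2 nK)$, and one checks $c$ is exactly the constant that makes this equal $\tfrac{s+1}{n}\tfrac1K$.

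The genuinely nontrivial step — which is really the content of Lemma \ref{l31}, invoked here — is that $f^*$ is not merely feasible but optimal; feasibility alone only gives $\kappa^2(S) \le \tfrac{s+1}{n}\tfrac1K$, and the reverse inequality is exactly Lemma \ref{l31} specialized to $u_j = d_j/2$. So the main obstacle is purely bookkeeping: getting the constants in $T$ (and the boundary blocks) exactly right so that the rescaled objective lands on the claimed value rather than a constant multiple of it; I would also want to confirm that the constraint in Definition \ref{defcc} is compatible with rescaling (it is, since the expression is positively homogeneous of degree one in $\beta$, so dividing $\beta^*$ by its constraint value is legitimate and yields $\norm{X\beta}_n^2$ scaled by the square of that value). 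Finally I would remark that this shows the lower bound in Lemma \ref{l31} cannot be improved in general, which is the stated purpose of the corollary. $\hfill\blacksquare$
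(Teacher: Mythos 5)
Your proposal is correct and is essentially the intended argument: the paper itself only points to the proof of Theorem 6.1 in \cite{vand18}, but the route you take --- lower bound from Lemma \ref{l31} with $u_j=d_j/2$, plus exhibiting $f^*$ as a feasible point whose rescaled objective matches it --- is exactly what the corollary's statement is set up for. The one piece of arithmetic you left with a question mark resolves cleanly in your favor: since the block signs alternate, $T=\sum_{\ell=2}^{s+1}(\abs{c_\ell}+\abs{c_{\ell-1}})=\abs{c_1}+2\sum_{j=2}^{s}\abs{c_j}+\abs{c_{s+1}}=n/d_1+\sum_{j=2}^{s}4n/d_j+n/d_{s+1}=nK$ (the end values are counted once, not twice), so the rescaled objective is $(s+1)\,nK/(nK)^2=\frac{s+1}{n}\frac{1}{K}$ exactly, i.e.\ your constant $c$ equals $1$.
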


\begin{proof}[Proof of Corollary \ref{c32}]
See Appendix \ref{appB}.
\end{proof}

\begin{remark}
For the compatibility constant we want to find the largest possible lower bound. Thus we have to choose the $u_j$'s s.t. $K$ is minimized. We look at the first order optimality conditions and notice that they reduce to finding the extremes of $(s-1)$ functions of the type $g(x)=\frac{1}{d-x}+\frac{1}{x}$, $x\in (0,d)$, where $t\in\mathbb{N}$ is fixed. The global minimum of $g$ on $(0,d)$ is achieved at $x=\frac{d}{2}$. Thus, we can not obtain the optimal value of $K$ as soon as at least one $d_j$ is odd.
\end{remark}

\begin{lemma}[Lower bound on the weighted compatibility constant for the path graph, Lemma 9.1 in \cite{vand18}]\label{l33}
For the path graph it holds that
\begin{equation*}
\kappa_w^2(S)\geq \frac{s+1}{n }\frac{1}{(\norm{w}_{\infty}\sqrt{K}+\norm{Dw}_2 )^2} \geq \frac{s+1}{n}\frac{1}{2 (\norm{w}^2_{\infty}K +\norm{Dw}^2_2)},
\end{equation*}
where $D$ is the incidence matrix of the path graph.
\end{lemma}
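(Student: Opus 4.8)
The plan is to deduce the weighted bound from the unweighted one of Lemma~\ref{l31} by applying the latter not to the signal itself but to a reweighted copy of it. Write $f=X\beta$ and $N:=\{2,\ldots,n\}\setminus S$. Since $w_j=1$ for $j\in\{1\}\cup S$, substituting $\beta=\widetilde{D}f$ turns the constraint of Definition~\ref{defwcc} into
\begin{equation*}
Q_w(f):=\sum_{j\in S}\abs{f_j-f_{j-1}}-\sum_{j\in N}w_j\abs{f_j-f_{j-1}}=1 ,
\end{equation*}
while the objective becomes $(s+1)\norm{X\beta}_n^2=(s+1)\norm{f}_2^2/n$. Hence it suffices to prove that for \emph{every} $f\in\R^n$
\begin{equation*}
Q_w(f)\le\bigl(\norm{w}_\infty\sqrt{K}+\norm{Dw}_2\bigr)\norm{f}_2 ;
\end{equation*}
the first inequality of the lemma then follows by homogeneity and by minimising over $f$ with $Q_w(f)=1$, and the second inequality is immediate from $(a+b)^2\le 2(a^2+b^2)$.

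To prove the displayed bound I would introduce the auxiliary signal $g:=w\odot f\in\R^n$ and use the discrete product rule
\begin{equation*}
g_j-g_{j-1}=w_j(f_j-f_{j-1})+(Dw)_{j-1}\,f_{j-1},\qquad j=2,\ldots,n .
\end{equation*}
For $j\in S$, where $w_j=1$, this yields $\abs{f_j-f_{j-1}}\le\abs{g_j-g_{j-1}}+\abs{(Dw)_{j-1}}\,\abs{f_{j-1}}$; for $j\in N$, where $0\le w_j\le 1$, passing to absolute values yields $w_j\abs{f_j-f_{j-1}}\ge\abs{g_j-g_{j-1}}-\abs{(Dw)_{j-1}}\,\abs{f_{j-1}}$. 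Summing the former over $S$, the latter over $N$, and collecting the $(Dw)$-terms gives
\begin{equation*}
Q_w(f)\le\Bigl(\sum_{j\in S}\abs{g_j-g_{j-1}}-\sum_{j\in N}\abs{g_j-g_{j-1}}\Bigr)+\sum_{k=1}^{n-1}\abs{(Dw)_k}\,\abs{f_k} .
\end{equation*}
The bracket equals $\norm{(\widetilde{D}g)_S}_1-\norm{(\widetilde{D}g)_{-(\{1\}\cup S)}}_1$, so the homogenised form of Lemma~\ref{l31} (which, since both sides scale linearly, says $\norm{(\widetilde{D}g)_S}_1-\norm{(\widetilde{D}g)_{-(\{1\}\cup S)}}_1\le\sqrt{K}\,\norm{g}_2$ for every $g\in\R^n$, with the same constant $K$) bounds it by $\sqrt{K}\,\norm{g}_2\le\sqrt{K}\,\norm{w}_\infty\norm{f}_2$. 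The remaining sum is at most $\norm{Dw}_2\norm{f}_2$ by Cauchy--Schwarz, since $Dw$ has only $n-1$ entries, paired with $f_1,\ldots,f_{n-1}$. Adding the two contributions gives the claim.

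The one genuinely delicate point is the choice of the auxiliary signal: one must multiply $f$, not $\beta$, by the weights. With this choice the defect created by the non-constancy of $w$ telescopes into $\sum_k\abs{(Dw)_k}\,\abs{f_k}$, which has the favourable scaling $\norm{Dw}_2\norm{f}_2$; reweighting $\beta$ and then applying $X$ would instead leave a cumulative-sum operator acting on $Dw\odot f$, whose operator norm is of order $n$ and would destroy the bound. Apart from that, the only care needed is the sign bookkeeping between $j\in S$ (where $w_j=1$, the product-rule remainder enters with a plus sign, and $\abs{f_j-f_{j-1}}$ stays intact) and $j\in N$ (where $0\le w_j\le 1$ lets one pull the absolute value through $w_j$, and the remainder enters with a minus sign); once $g=w\odot f$ is fixed, this is routine.
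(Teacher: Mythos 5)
Your proof is correct and follows essentially the same route as the paper: the paper (via its toy example in Appendix \ref{int} and the reference to Lemma 9.1 of \cite{vand18}) also introduces $g=w\odot f$, uses the discrete product rule to split each weighted difference into $\abs{g_j-g_{j-1}}$ plus a remainder $\abs{w_j-w_{j-1}}\abs{f_{j-1}}$, controls the bracket by the unweighted argument applied to $g$ (giving $\sqrt{K}\norm{w}_\infty\norm{f}_2$), and bounds the remainder by Cauchy--Schwarz with $\norm{Dw}_2\norm{f}_2$. The only cosmetic difference is that the paper drops the weight differences across the edges joining consecutive ``modules'' before bounding, which yields a partial sum that it then relaxes to the full $\norm{Dw}_2$; your version arrives at the same stated bound directly.
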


\begin{proof}[Proof of Lemma \ref{l33}]
See Appendix \ref{appB}.
\end{proof}

\subsection{Oracle inequality}

Define the vector
\begin{equation*}
\Delta:=\left( {d_1}, \lfloor{d_2}/{2}\rfloor,\lceil{d_2}/{2}\rceil, \ldots,\lfloor{d_s}/{2}\rfloor,\lceil{d_s}/{2}\rceil, {d_{s+1}} \right)\in\R^{s+1}
\end{equation*}
and let $\overline{\Delta}_h$ be its harmonic mean.

We now want to translate the result of Theorem \ref{t21} to the path graph. To do so we need
 a lower bound for the weighted compatibility constant, i.e. an explicit upper bound for $\sum_{i=2}^n(w_i-w_{i-1})^2$. In this way we can obtain the following Corollary.

\begin{Coro}[Sharp oracle inequality for the path graph]\label{c31}
Assume $d_i\ge 4, \forall i \in \{1, \ldots, s+1 \}$.
It holds that
\begin{eqnarray*}
\norm{\widehat{f}-f^0}^2_n &\leq& \inf_{f\in\R^n} \left\{\norm{f-f^0}^2_n+4\lambda\norm{(Df)_{-S}}_1 \right\}\\
& + &  {\frac{8\log(2/\delta)\sigma^2}{n} } + {4\sigma^2}\frac{s+1}{n}\\
& + &  8\sigma^2 \log(4(n-s-1)/\delta)\left( \frac{2\gamma^2 s}{ \bar{\Delta}_h}+5  \frac{s+1}{n}\log\left(\frac{n}{s+1} \right) \right).
\end{eqnarray*}
If we choose $f=f^0$ and $S=S_0$ we obtain that
\begin{eqnarray*}
\norm{\widehat{f}-f^0}^2_n &=& \bigo (\log(n)s_0   / \bar{\Delta}_h)+ \bigo (\log(n)\log( n/s_0)s_0/n) .
\end{eqnarray*}
\end{Coro}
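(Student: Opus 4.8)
The plan is to combine the general oracle inequality of Theorem~\ref{t21} with the lower bound on the weighted compatibility constant for the path graph (Lemma~\ref{l33}), so the work is essentially to produce a clean lower bound for $\kappa_w^2(S)$ in terms of $\bar\Delta_h$ and then substitute. First I would observe that Lemma~\ref{l33} reduces everything to controlling two quantities: $\norm{w}_\infty^2 K$ and $\norm{Dw}_2^2 = \sum_{i=2}^n (w_i - w_{i-1})^2$, where $w_j = 1 - \omega_j/\gamma$. Since $0 \le w \le 1$ we have $\norm{w}_\infty \le 1$, so the first term is at most $K$. For the optimal choice of the free parameters $u_j = \lfloor d_j/2\rfloor$ (or $\lceil d_j/2\rceil$), the quantity $K = 1/d_1 + \sum_{j=2}^s(1/u_j + 1/(d_j-u_j)) + 1/d_{s+1}$ is exactly $\tfrac{s+1}{\text{(something)}}$; more precisely, $K$ equals $(s+1)$ times the reciprocal of the harmonic mean of the $s+1$ entries... wait — I need to be careful: the vector $\Delta$ has $2s + 1$... no, $s+1$? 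Let me recount: $\Delta = (d_1, \lfloor d_2/2\rfloor, \lceil d_2/2\rceil, \dots, \lfloor d_s/2\rfloor, \lceil d_s/2\rceil, d_{s+1})$ has $2 + 2(s-1) = 2s$ entries. So $K = \sum (\text{reciprocals of entries of }\Delta) = 2s/\bar\Delta_h$. Hence $\norm{w}_\infty^2 K \le 2s/\bar\Delta_h$.

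The genuinely new estimate is the bound on $\norm{Dw}_2^2$. Since $w_i - w_{i-1} = (\omega_{i-1} - \omega_i)/\gamma$, it suffices to bound $\sum_{i=2}^n (\omega_i - \omega_{i-1})^2$ by $\bigo(\tfrac{s+1}{1}\log(n/(s+1)))$ — actually by $c(s+1)\log(n/(s+1))$ with a concrete constant, say $5(s+1)\log(n/(s+1))$ so as to land exactly on the displayed remainder term. This is where Lemma~\ref{l61} enters: on the path graph, $\omega_j$ vanishes on $\{1\}\cup S$ and between consecutive elements $j^-, j^+$ of $\{1\}\cup S$ we have the explicit formula $n\omega_j^2 = (j^+-j)(j-j^-)/(j^+-j^-)$, i.e. $\omega_j$ traces a discrete ``tent'' function over each constant block. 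So $\sum_i (\omega_i - \omega_{i-1})^2$ splits into a sum over the at most $s+1$ blocks of $\sum_{j} (\omega_j - \omega_{j-1})^2$ within a block of length $d$, and within such a block one computes $\omega_j = \sqrt{(d - r)(r)/(nd)}$ for position $r$, $1 \le r \le d-1$ (roughly), whence the increments telescope against a concave square-root profile. The standard bound is $\sum_{r} (\sqrt{r(d-r)} - \sqrt{(r-1)(d-r+1)})^2 \le C\log d$ for an absolute constant $C$; summing over blocks and using concavity of $\log$ (Jensen) to replace $\sum_{\text{blocks}} \log d_{\text{block}}$ by $(s+1)\log(n/(s+1))$ gives the claim, and $1/n$ factors out. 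I expect the fiddly part to be pinning down the absolute constant so the final inequality reads exactly as stated (with the $5$, the $8$, the factor $2\gamma^2$), and handling the two boundary blocks (lengths $d_1$ and $d_{s+1}$) where the tent is truncated — but these only make the sum smaller.

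With these two ingredients, Lemma~\ref{l33} gives
\begin{equation*}
\kappa_w^2(S) \ge \frac{s+1}{n}\cdot\frac{1}{2\left(\norm{w}_\infty^2 K + \norm{Dw}_2^2\right)} \ge \frac{s+1}{n}\cdot\frac{1}{2\left(\frac{2s}{\bar\Delta_h} + \frac{5(s+1)}{\gamma^2 n}\log\frac{n}{s+1}\right)},
\end{equation*}
so that
\begin{equation*}
\frac{\gamma^2(s+1)}{\kappa_w^2(S)} \le 2\gamma^2 n\left(\frac{2s}{\bar\Delta_h} + \frac{5(s+1)}{\gamma^2 n}\log\frac{n}{s+1}\right) = n\left(\frac{4\gamma^2 s}{\bar\Delta_h} + \frac{10(s+1)}{n}\log\frac{n}{s+1}\right).
\end{equation*}
Plugging this into the $\frac{4\sigma^2}{n}\cdot\frac{\gamma^2(s+1)}{\kappa_w^2(S)}\log(4(n-s-1)/\delta)$ term of Theorem~\ref{t21}, together with the surviving $\frac{4\sigma^2}{n}(s+1)$ and $\frac{8\sigma^2}{n}\log(2/\delta)$ terms, yields exactly the first displayed bound of the Corollary (the requirement $d_i \ge 4$ is what guarantees $u_j = \lfloor d_j/2\rfloor \ge 2$ and $d_j - u_j \ge 2$, so the admissibility constraints of the second notation in Subsection~\ref{notation} and of Lemma~\ref{l31} are met). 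For the second display, take $f = f^0$ and $S = S_0$ so the infimum term vanishes; then $\lambda = \bigo(\sqrt{\log n / n})$, $\log(4(n-s-1)/\delta) = \bigo(\log n)$, and the remainder collapses to $\bigo(\log(n) s_0/\bar\Delta_h) + \bigo(\log(n)\log(n/s_0) s_0/n)$, with the stray $\frac{4\sigma^2 s_0}{n}$ and $\frac{8\sigma^2\log(2/\delta)}{n}$ absorbed into the second $\bigo$ term. The main obstacle, as noted, is the explicit constant in the $\sum(\omega_i - \omega_{i-1})^2 \le 5(s+1)\log(n/(s+1))$ step; everything else is bookkeeping on top of the two cited lemmas.
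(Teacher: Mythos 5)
Your proposal is correct and follows essentially the same route as the paper: Theorem \ref{t21} combined with Lemma \ref{l33}, the choice $u_j=\lfloor d_j/2\rfloor$ giving $K\le 2s/\bar{\Delta}_h$, and the per-block bound on $\sum_i(\omega_i-\omega_{i-1})^2$ via the explicit tent profile of Lemma \ref{l61}, with the geometric-mean-versus-arithmetic-mean step yielding the $(s+1)\log(n/(s+1))$ factor. The only piece you leave open, the absolute constant, is obtained in the paper from the elementary bound $\sum_{i=1}^{k}i^{-1}\le \tfrac{5}{2}\log k$ for $k\ge 2$ applied to each half-block (this is also where $d_i\ge 4$ is used, to guarantee each half-block has length at least $2$), and your arithmetic from that point on reproduces the stated constants exactly.
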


\begin{proof}[Proof of Corollary \ref{c31}]
See Appendix \ref{appB}.
\end{proof}


\begin{remark}
Since the harmonic mean of $\Delta$  is upper bounded by its arithmetic mean, and this upper bound is attained when all the entries of $\Delta$ are the same, we get a lower bound for the order of the mean squared error of
$$
\frac{s\log(n)}{n}\left(s+\log\left(\frac{n}{s} \right) \right).
$$
\end{remark}

\begin{remark}
Our result differs from the one obtained by \cite{dala17} in two points:
\begin{itemize}
\item We have $\bar{\Delta}_h$, the harmonic mean of the distances between jumps, instead of $\min_j\Delta_j$, the minimum distance between jumps;
\item We slightly improve the rate from by reducing a $\log(n)$ to $\log(n/s)$. This is achieved with a more careful bound on the square of the consecutive differences of the weights.
\end{itemize}
\end{remark}

\section{Path graph with one branch}\label{sec5}

In this section we consider $\mathcal{G}$ to be the path graph with one branch and $n$ vertices.
%

\subsection{Compatibility constant}

\begin{lemma}[Lower bound for the compatibility constant for the branched path graph]\label{l41}
For the branched path graph it holds that
\begin{equation*}
\kappa^2(S)\geq \frac{s+1}{n}\frac{1}{K^b},
\end{equation*}
where
\begin{equation*}
K^b=\sum_{i=1}^3\left(\frac{1}{d_1^i}+ \sum_{j=2}^{s_i}\left({1\over u^i_j} +{1\over d^i_j-u^i_j }\right) +\frac{1}{d^i_{s_i+1}} \right)
\end{equation*}
\end{lemma}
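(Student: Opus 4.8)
The plan is to reduce the branched‑path case to three applications of the path‑graph bound of Lemma \ref{l31}, one for each of the three path pieces into which the branched path graph is decomposed in the ``second notation'' of Subsection \ref{notation}. First I would unfold the definition of $\kappa^2(S)$: we must lower bound $(s+1)\norm{X\beta}_n^2$ over all $\beta$ with $\norm{\beta_S}_1-\norm{\beta_{-(\{1\}\cup S)}}_1=1$. Passing back to $f=X\beta$, the constraint reads $\norm{(Df)_S}_1-\norm{(Df)_{-S}}_1=1$ (the unpenalized first coordinate of $\beta$ corresponds to the root value $f_1$ and plays no role in the constraint), so equivalently I want to show that whenever $\norm{(Df)_S}_1-\norm{(Df)_{-S}}_1=1$ we have $\norm{f}_n^2\ge \frac{1}{n}\cdot\frac{1}{K^b}$, i.e. $\sum_{i=1}^n f_i^2\ge 1/K^b$.

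The key step is the geometric observation already implicit in Section \ref{sec3}: the branched path graph, cut at a vertex inside each of the three segments (the cut points being governed by the free parameters $d^1_{s_1+1},d^2_1,d^3_1\ge 2$), splits $V$ into three disjoint vertex sets $V^1,V^2,V^3$ carrying path subgraphs with vertex counts summing to $n$, and the active set $S$ splits accordingly into $S_1,S_2,S_3$ with $s=s_1+s_2+s_3$. On each piece $i$, $f$ restricted to $V^i$ is a function on a path graph; applying Lemma \ref{l31} to that path graph gives
\begin{equation*}
\sum_{v\in V^i} f_v^2 \ge \frac{1}{K_i}\Big(\norm{(Df)_{S_i}}_1-\norm{(Df)_{-S_i}}_1\Big)_+^2\Big/(s_i+1)\cdot(s_i+1),
\end{equation*}
more precisely the homogeneous form of Lemma \ref{l31}: $\sqrt{\sum_{v\in V^i}f_v^2}\cdot\sqrt{s_i+1}\ge \sqrt{1/K_i}\,\big(\norm{(Df)_{S_i}}_1-\norm{(Df)_{-S_i}}_1\big)$, where $K_i=\frac{1}{d^i_1}+\sum_{j=2}^{s_i}(\frac1{u^i_j}+\frac1{d^i_j-u^i_j})+\frac1{d^i_{s_i+1}}$, so that $K^b=K_1+K_2+K_3$. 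Summing the three inequalities and using that the edges of the branched path graph are exactly the edges internal to the three pieces (the edge‑set partition; one must check no edge is lost or double counted at the cuts — a cut removes no edge since cutting is done at a vertex and the incident edges stay within one of the two resulting pieces only after we allow the free $d^i_1$, $d^i_{s_i+1}$ to absorb them), we get $\sum_i (\norm{(Df)_{S_i}}_1-\norm{(Df)_{-S_i}}_1)=\norm{(Df)_S}_1-\norm{(Df)_{-S}}_1=1$. It then remains to combine $\sum_{v\in V^i}f_v^2\ge c_i$ with $\sum_i \sqrt{c_i/(\ldots)}$-type bounds; concretely, writing $a_i:=\norm{(Df)_{S_i}}_1-\norm{(Df)_{-S_i}}_1$ we have $\sum_v f_v^2\ge \sum_i a_i^2/K_i$ subject to $\sum_i a_i=1$, and a Cauchy–Schwarz / Lagrange argument gives $\sum_i a_i^2/K_i\ge 1/(K_1+K_2+K_3)=1/K^b$, which is exactly the claim.

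I expect the main obstacle to be the bookkeeping at the three cut points: making sure that after choosing the cut vertices the three path pieces really do inherit a valid path‑graph structure with parameters $d^i_j$ satisfying the hypotheses of Lemma \ref{l31} (in particular $d^i_1\ge2$, $d^i_{s_i+1}\ge2$, $d^i_j\ge4$ for interior $j$ — which is why the ``large enough'' requirement $d^i_j\ge4$ appears), and that the edge set and vertex set partition exactly, so that both $\sum_v f_v^2$ and the constraint functional decompose additively with no cross terms. A secondary subtlety is that the branching vertex $b$ is shared: one of the three pieces (the one containing the root) must be the one that ``owns'' vertex $b$, and the lengths $b_{s_1+1},b_{s_1+2},b_{s_1+s_2+3}$ must be apportioned consistently, which is precisely the content of the Remark relating the first and second notations. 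Once the decomposition is set up cleanly, the analytic core is just the elementary inequality $\sum a_i^2/K_i\ge (\sum a_i)^2/\sum K_i$, so the proof is essentially combinatorial rather than computational; I would relegate the detailed decomposition argument to Appendix \ref{appB}, as the lemma statement indicates.
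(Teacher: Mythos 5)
Your proposal matches the paper's proof, which is precisely a one-line reduction to Lemma \ref{l31} via the decomposition of the branched path graph into three path graphs given by the second notation, so the approach is essentially identical. One small correction to your bookkeeping: cutting at the two junctions does remove two edges, but since those edges lie in $-S$ they contribute negatively to $\norm{\beta_S}_1-\norm{\beta_{-(\{1\}\cup S)}}_1$, so discarding them yields $\sum_i a_i \ge 1$ rather than equality, which is all your final Cauchy--Schwarz step $\sum_i a_i^2/K_i \ge (\sum_i a_i)^2/\sum_i K_i$ requires (and likewise the stray $\sqrt{s_i+1}$ in your ``homogeneous form'' should cancel, as your subsequent display correctly uses).
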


\begin{proof}[Proof of Lemma \ref{l41}]
See Appendix \ref{appC}.
\end{proof}

\begin{Coro}[The bound can be tight]\label{l43}
Assume $d^i_j$ is even $\forall  j\in \{2, \ldots, s_i \},i\in \{1,2,3 \}$. One can then choose $u^i_j=d_j/2,\forall  j\in \{2, \ldots, s_i \},i\in \{1,2,3 \}$. Moreover, assume that $d^1_{s_1+1}=d^2_1=d^3_1$.  Let $f^i, i\in\{1,2,3 \}$ be the restriction of $f$ to the three path graphs of length $p_i$ each. Let us now define ${f^*}^i\in \R^{p_i}$ by 
\begin{equation*}
{f^*}^i_j=\begin{cases}
-\frac{n}{d^1_1} & j=1, \ldots, d^1_1\\
\frac{2n}{d^1_2} & j=d^1_1+1, \ldots, d^1_1+d^1_2\\
\vdots & \\
(-1)^{s_1}\frac{2n}{d^1_{s_1}} & j=\sum_{j=1}^{s_1-1}d^1_j+1, \ldots, \sum_{j=1}^{s_1}d^1_j\\
(-1)^{s_1+1}\frac{n}{d^1_{s_1+1}} & j=\sum_{j=1}^{s_1}d^1_j+1, \ldots, p_1\\
\end{cases}
\end{equation*}

and for $i\in \{2,3\}$

\begin{equation*}
{f^*}^i_j=\begin{cases}
(-1)^{s_1+1} \frac{n}{d^i_1} & j=1, \ldots, d^i_1\\
(-1)^{s_1+2}\frac{2n}{d^i_2} & j=d^i_1+1, \ldots, d^i_1+d^i_2\\
\vdots & \\
(-1)^{s_1+s_i+1}\frac{2n}{d^i_{s_i}} & j=\sum_{j=1}^{s_i-1}d^i_j+1, \ldots, \sum_{j=1}^{s_i}d^i_j\\
(-1)^{s_1+s_1+1}\frac{n}{d^i_{s_i+1}} & j=\sum_{j=1}^{s_i}d^i_j+1, \ldots, p_i.\\
\end{cases}
\end{equation*}
Let $\beta^*$ be defined by $f^*=X\beta^*$. Then
\begin{equation*}
\kappa^2(S)=\frac{s+1}{n }\frac{1}{K^b},
\end{equation*}
where
\begin{equation*}
K^b=\sum_{i=1}^{3}\left(\frac{1}{d^i_1}+\sum_{j=2}^{s_i} \frac{4}{d^i_{j}} + \frac{1}{d^i_{s_i+1}} \right).
\end{equation*}
\end{Coro}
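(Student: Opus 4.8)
\textbf{Proof proposal for Corollary \ref{l43}.}

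The plan is to show that the lower bound of Lemma \ref{l41} is attained for the specific choice of $f^*$ (equivalently $\beta^*$) given in the statement, by checking two things: first, that $\beta^*$ (suitably normalized) is feasible for the minimization defining $\kappa^2(S)$, i.e.\ it satisfies the constraint $\norm{\beta_S^*}_1-\norm{\beta^*_{-(\{1\}\cup S)}}_1=1$ after rescaling; and second, that for this $\beta^*$ the value $(s+1)\norm{X\beta^*}_n^2$ equals exactly $\frac{s+1}{n}\frac{1}{K^b}$ with the simplified $K^b=\sum_{i=1}^3\big(\frac{1}{d^i_1}+\sum_{j=2}^{s_i}\frac{4}{d^i_j}+\frac{1}{d^i_{s_i+1}}\big)$. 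Since Lemma \ref{l41} already gives the ``$\geq$'' direction, exhibiting one feasible $\beta^*$ achieving the bound yields equality. The special assumptions --- $d^i_j$ even so that $u^i_j=d^i_j/2$ is admissible, and $d^1_{s_1+1}=d^2_1=d^3_1$ so the three branches agree at the ramification --- are precisely what make $f^*$ well-defined on the whole branched graph and make the cross-term at the branching point vanish.

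First I would translate everything into the $f$-coordinates, which is natural here because $\norm{X\beta}_n^2=\norm{f}_n^2=\frac{1}{n}\sum_i f_i^2$ and the constraint $\norm{\beta_S}_1-\norm{\beta_{-(\{1\}\cup S)}}_1$ becomes, via $\beta=\widetilde D f$, a statement about the jumps $f_j-f_{j-1}$ of $f$ along the edges: the jumps indexed by $S$ enter with a $+$ sign and those not in $S$ with a $-$ sign. By construction $f^*$ is piecewise constant with breakpoints exactly at the elements of $S$ (on each of the three path segments, following the second notation), so $f^*$ has no jumps outside $\{1\}\cup S$; hence the constraint reduces to $\sum_{j\in S}\abs{f^*_j-f^*_{j-1}}$. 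I would compute these jumps from the explicit piecewise values --- each jump between consecutive constant pieces of heights $\pm\frac{2n}{d^i_{j-1}}$ and $\mp\frac{2n}{d^i_j}$ (with the end pieces carrying $\frac{n}{d^i_1}$ and $\frac{n}{d^i_{s_i+1}}$, and the branching piece shared) --- sum their absolute values, and verify the total equals $n K^b$ up to the normalization, so that dividing $f^*$ by $n K^b$ makes it feasible. This is the same computation as in the path-graph Corollary \ref{c32}, done three times and patched at the branch point.

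Next I would compute $\norm{f^*}_n^2=\frac{1}{n}\sum_i (f^*_i)^2$. On a constant piece of length $d$ and height $h$ the contribution is $d h^2$; with $h=\frac{2n}{d}$ this is $\frac{4n^2}{d}$, and with $h=\frac{n}{d}$ it is $\frac{n^2}{d}$. Summing over all pieces of all three branches (the shared branching piece counted once, which is exactly where $d^1_{s_1+1}=d^2_1=d^3_1$ is used to make the bookkeeping consistent) gives $\sum_i (f^*_i)^2 = n^2 K^b$ with the simplified $K^b$. Therefore, after rescaling $f^*\mapsto f^*/(nK^b)$ to meet the constraint, $(s+1)\norm{X\beta^*}_n^2 = (s+1)\cdot\frac{1}{n}\cdot\frac{n^2 K^b}{(nK^b)^2} = \frac{s+1}{n}\frac{1}{K^b}$, matching the lower bound from Lemma \ref{l41} (whose $K^b$, under $u^i_j=d^i_j/2$, collapses to the same simplified expression). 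Combining the two inequalities gives equality.

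The main obstacle I anticipate is the bookkeeping at the ramification point: one must check that $f^*$ as defined on the three segments is genuinely a single well-defined function on the branched graph (the shared vertices near $b$ get a consistent value), that the jump-counting across the branch point is done without double counting, and that the alternating-sign pattern (the $(-1)^{s_1+\cdots}$ exponents, one of which reads $(-1)^{s_1+s_1+1}$ and is presumably a typo for $(-1)^{s_1+s_i+1}$) is internally consistent so that all jumps genuinely have the stated magnitudes and the feasibility constraint comes out with the correct sign. Once the piecewise structure is pinned down, both the $\ell^1$-constraint computation and the $\norm{\cdot}_n^2$ computation are routine, essentially three copies of the path-graph case of Corollary \ref{c32} glued together.
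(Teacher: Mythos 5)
Your proposal is correct and follows essentially the same route as the paper, which simply asserts that the result ``follows by direct calculations in analogy to Corollary \ref{c32}'': you exhibit the extremal $f^*$, check feasibility of the rescaled $\beta^*$ and that $(s+1)\norm{X\beta^*}_n^2$ equals the lower bound of Lemma \ref{l41} with $u^i_j=d^i_j/2$, and conclude equality. Your attention to the consistency of $f^*$ across the ramification point (enabled by $d^1_{s_1+1}=d^2_1=d^3_1$) and the observation that $(-1)^{s_1+s_1+1}$ is a typo for $(-1)^{s_1+s_i+1}$ are exactly the right bookkeeping points.
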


\begin{proof}[Proof of Corollary \ref{l43}]
See Appendix \ref{appC}.
\end{proof}

Consider the decomposition of the branched path graph into three path graphs, implicitely done by using the \textbf{second notation} in Section \ref{notation}. Let $D^*$ denote the incidence matrix of the branched path graph, where the entries in the rows corresponding to the edges connecting the three above mentioned path graphs have been substituted with zeroes.

\begin{lemma}[Lower bound on the weighted compatibility constant for the branched path graph]\label{l42}

\begin{eqnarray*}
\kappa^2_w(S)&\geq& \frac{s+1}{n}\frac{1}{(\sqrt{K^b}\norm{w}_{\infty}+ \norm{D^*w}_2)^2} \geq\frac{s+1}{n}\frac{1}{2({K^b}\norm{w}^2_{\infty}+ \norm{D^*w}^2_2)}\\
&\geq& \frac{s+1}{n}\frac{1}{2({K^b}\norm{w}^2_{\infty}+ \norm{Dw}^2_2)} .
\end{eqnarray*}

\end{lemma}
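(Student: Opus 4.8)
The idea is to run the argument behind Lemma~\ref{l33} branch by branch and glue the three resulting path-graph estimates together. First observe that the last two inequalities in the statement are purely formal: $(\sqrt{K^b}\norm{w}_{\infty}+\norm{D^*w}_2)^2\le 2(K^b\norm{w}_{\infty}^2+\norm{D^*w}_2^2)$ is the elementary bound $(x+y)^2\le 2x^2+2y^2$, and $\norm{D^*w}_2\le\norm{Dw}_2$ because, by construction, $D^*w$ is $Dw$ with the two entries indexed by the edges joining the three constituent path graphs set to zero. So the content is the first inequality. Since $\kappa_w^2(S)$ is positively homogeneous, it suffices to show: for every $\beta\in\R^n$ with $\norm{(w\odot\beta)_S}_1-\norm{(w\odot\beta)_{-(\{1\}\cup S)}}_1=1$, writing $f:=X\beta$,
\begin{equation*}
1\ \le\ \bigl(\norm{w}_{\infty}\sqrt{K^b}+\norm{D^*w}_2\bigr)\norm{f}_2 ,
\end{equation*}
since this rearranges to $\norm{X\beta}_n^2=\tfrac1n\norm{f}_2^2\ge\tfrac1n\bigl(\norm{w}_{\infty}\sqrt{K^b}+\norm{D^*w}_2\bigr)^{-2}$, whence $\kappa_w^2(S)\ge\tfrac{s+1}{n}\bigl(\norm{w}_{\infty}\sqrt{K^b}+\norm{D^*w}_2\bigr)^{-2}$.

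To prove the displayed bound, decompose the branched path graph into the three path graphs of the \textbf{second notation} of Section~\ref{notation}, of lengths $p_1,p_2,p_3$ with $\sum_i p_i=n$. Write $f^{(i)}\in\R^{p_i}$ for the restriction of $f$ to the $i$-th piece, $\beta^{(i)}$ for the associated jump vector obtained by re-rooting that piece at its own first vertex, and $S_i$, $w^{(i)}$ for the induced active set and the restriction of $w$. Because $d^i_1\ge 2$, the first vertex of each piece lies outside $S$, so the two ``cut'' jumps of $\beta$ (the ones sitting on the edges joining the pieces) belong to $-(\{1\}\cup S)$ and enter the constraint functional only with a minus sign; the re-rooted first coordinate $\beta^{(i)}_1$ is then the absolute level $f$ at that vertex, which is unpenalized exactly as in Definition~\ref{defwcc}. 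Dropping those two nonnegative cut terms and regrouping the rest piece by piece gives
\begin{equation*}
1=\norm{(w\odot\beta)_S}_1-\norm{(w\odot\beta)_{-(\{1\}\cup S)}}_1\ \le\ \sum_{i=1}^{3}\Bigl(\norm{(w^{(i)}\odot\beta^{(i)})_{S_i}}_1-\norm{(w^{(i)}\odot\beta^{(i)})_{-(\{1\}\cup S_i)}}_1\Bigr),
\end{equation*}
a sum of three \emph{path-graph} weighted-compatibility functionals.

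Now apply to each summand the path-graph estimate from the proof of Lemma~\ref{l33} --- which is exactly why that proof is arranged in Appendix~\ref{appB} to be local and insensitive to the precise form of $w$, so that it applies to $\beta^{(i)}$ with the restricted weights $w^{(i)}$. The $i$-th summand is then at most $\bigl(\norm{w^{(i)}}_{\infty}\sqrt{K^{(i)}}+\norm{D^{(i)}w^{(i)}}_2\bigr)\norm{f^{(i)}}_2$, where $D^{(i)}$ is the incidence matrix of the $i$-th piece and $K^{(i)}=\tfrac1{d^i_1}+\sum_{j=2}^{s_i}\bigl(\tfrac1{u^i_j}+\tfrac1{d^i_j-u^i_j}\bigr)+\tfrac1{d^i_{s_i+1}}$, so that $K^b=\sum_{i=1}^{3}K^{(i)}$ by the definition in Lemma~\ref{l41}. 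Using $\norm{w^{(i)}}_{\infty}\le\norm{w}_{\infty}$, summing over $i$, and invoking Cauchy--Schwarz twice --- once to extract $\sqrt{\sum_i\norm{f^{(i)}}_2^2}=\norm{f}_2$, and once in the form $\sum_i(a_i+b_i)^2\le\bigl(\sqrt{\sum_i a_i^2}+\sqrt{\sum_i b_i^2}\bigr)^2$ with $a_i=\norm{w}_{\infty}\sqrt{K^{(i)}}$ and $b_i=\norm{D^{(i)}w^{(i)}}_2$ --- together with the identities $\sum_i K^{(i)}=K^b$ and $\sum_i\norm{D^{(i)}w^{(i)}}_2^2=\norm{D^*w}_2^2$ (an edge lies in some piece iff it is not a cut edge, i.e. iff it is retained in $D^*$), yields $1\le(\norm{w}_{\infty}\sqrt{K^b}+\norm{D^*w}_2)\norm{f}_2$, as wanted.

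I expect the delicate step to be the second paragraph: verifying that the two cut jumps really appear in the constraint functional only with a minus sign (this is where $d^i_1\ge 2$ is used, and why the second notation is set up the way it is), that re-rooting pieces $2$ and $3$ is compatible with leaving the first coordinate unpenalized, and that the regrouping loses nothing beyond those two nonnegative cut terms. Once that is settled, Lemma~\ref{l33} can be used verbatim on each piece, and what remains is the bookkeeping $\sum_i K^{(i)}=K^b$, $\sum_i\norm{D^{(i)}w^{(i)}}_2^2=\norm{D^*w}_2^2$, and the two Cauchy--Schwarz applications; note in particular that passing to $D^*$ rather than $D$ is precisely what makes the branching vertex harmless, since in the decomposed picture no vertex is the parent of two within-piece edges.
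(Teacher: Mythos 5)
Your proposal is correct and follows essentially the same route as the paper, which proves this lemma by the same device: decompose the branched path graph into the three path graphs of the second notation, drop the two connecting (cut) edges from the constraint functional --- legitimate since they lie in $-(\{1\}\cup S)$ and enter with a minus sign --- and rerun the path-graph argument of Lemma \ref{l33} on each piece, which is exactly why $\norm{D^*w}_2$ rather than $\norm{Dw}_2$ appears. Your write-up is in fact more explicit than the paper's sketch; the only cosmetic slip is that the inequality $\sum_i(a_i+b_i)^2\le\bigl(\sqrt{\sum_i a_i^2}+\sqrt{\sum_i b_i^2}\bigr)^2$ is Minkowski's (triangle) inequality in $\ell^2$ rather than Cauchy--Schwarz.
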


\begin{proof}[Proof of Lemma \ref{l42}]
See Appendix \ref{appC}.
\end{proof}

\subsection{Oracle inequality}

As in the case of the path graph, to prove an oracle inequality for the branched path graph, we need to find an explicit expression to control the weighted compatibility constant to insert in Theorem \ref{t21}. The resulting bound is similar to the one obtained in the Proof of Corollary \ref{c31}, up to a difference: we now have to handle with care the region around the branching point $b$.

For the branched path graph we define the vectors
\begin{equation*}
\Delta^i := (d^i_1,\lfloor d^i_2/2 \rfloor ,\lceil d^i_2/2 \rceil, \ldots,\lfloor d^i_{s_i}/2 \rfloor,\lceil d^i_{s_i}/2 \rceil , d^i_{s_i+1})\in \R^{2s_i},
\end{equation*}
and $\Delta:=(\Delta^1,\Delta^2,\Delta^3)\in \R^{2s}$. Let $\bar{\Delta}_h$ be the harmonic mean of $\Delta$.

\begin{remark}
As made clear in the \textbf{second notation} in Section \ref{notation}, we require that all  $d^1_{s_1+1},d^2_1,d^3_1\geq 2$, i.e. $b^*=b_{s_1+1}+b_{s_1+2}+b_{s_1+s_2+3}\geq 6$. This means that our approach can handle the case where at most one of the jumps surrounding the bifurcation point occurs directly at the bifurcation point. Note that neither $b_{s_1+1}=0$ nor $b_{s_1+2}=b_{s_1+s_2+3}=0$ are allowed.
\end{remark}

We can distinguish the following four cases:
\begin{enumerate}[1)]
\item
$b_{s_1+1},b_{s_1+2},b_{s_1+s_2+3}\ge 2$;
\item
$b_{s_1+2}=0$ or $b_{s_1+s_2+3}=0$;
\item
$b_{s_1+1}=1$;
\begin{enumerate}[a)]
\item
$b_{s_1+2}\wedge b_{s_1+s_2+3}=2$;
\item
$b_{s_1+2}\wedge b_{s_1+s_2+3}\geq 3$;
\end{enumerate}

\item
$b_{s_1+2}=1$ or $b_{s_1+s_2+3}=1$;
\end{enumerate}

\begin{Coro}[Sharp oracle inequality for the branched path graph]\label{c41}
Assume that $d^1_1, d^2_{s_2+1}, d^3_{s_3+1}\ge 4$. It holds that
\begin{eqnarray*}
\norm{\widehat{f}-f^0}^2_n &\leq& \inf_{f\in\R^n} \left\{\norm{f-f^0}^2_n+4\lambda\norm{(Df)_{-S}}_1 \right\}\\
&+  &  {\frac{8\log(2/\delta)\sigma^2}{n} } + {4\sigma^2}\frac{s+1}{n}\\
&+& 8\sigma^2 \log(4(n-s-1)/\delta)\left(\frac{2\gamma^2 s}{\bar{\Delta}_h}+\frac{5(2s+3)}{2n}\log \left(\frac{n+1}{2s+3} \right)+ \frac{\zeta}{n} \right),
\end{eqnarray*}
where
\begin{equation*}
\zeta=\begin{cases}
0 &, \text{ Case 1)}\\
b^*/2 &,\text{ Case 2)}\\
3 &,\text{ Case 3)a)} \\
b^*/4 &,\text{ Case 3)b)}\\
b^*/4 &,\text{ Case 4)}
\end{cases}.
\end{equation*}
If we choose $f=f^0$ and $S=S_0$ we get that
\begin{equation*}
\norm{\hat{f}-f^0}^2_n=\bigo(\log (n)s_0 / \bar{\Delta}_h)+\bigo(\log (n)\log(n/s_0)s_0/n)+ \bigo(\log (n)\zeta /n).
\end{equation*}
\end{Coro}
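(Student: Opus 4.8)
The plan is to feed the explicit bound on the weighted compatibility constant supplied by Lemma~\ref{l42} into the generic oracle inequality of Theorem~\ref{t21}. Applying Theorem~\ref{t21} gives the three ``free'' terms $\inf_f\{\norm{f-f^0}_n^2+4\lambda\norm{(Df)_{-S}}_1\}$, $\tfrac{8\sigma^2\log(2/\delta)}{n}$ and $\tfrac{4\sigma^2(s+1)}{n}$ verbatim, plus the remainder $\tfrac{4\sigma^2}{n}\,\tfrac{\gamma^2(s+1)}{\kappa_w^2(S)}\log\!\big(4(n-s-1)/\delta\big)$. By Lemma~\ref{l42} and the bound $0\le w\le 1$ (so $\norm{w}_\infty\le 1$) we have $1/\kappa_w^2(S)\le \tfrac{2n}{s+1}\big(K^b+\norm{Dw}_2^2\big)$, hence that remainder is at most $8\gamma^2\sigma^2\big(K^b+\norm{Dw}_2^2\big)\log\!\big(4(n-s-1)/\delta\big)$. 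Thus everything reduces to two estimates: one for $K^b$ (producing the $\bar{\Delta}_h$ term) and one for $\norm{Dw}_2^2=\gamma^{-2}\norm{D\omega}_2^2$ (producing the logarithmic term and the $\zeta$ term).

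For $K^b$ I would exploit the freedom in the $u^i_j$ and set $u^i_j=\lfloor d^i_j/2\rfloor$; this is admissible because the second notation together with the extra hypotheses $d^1_1,d^2_{s_2+1},d^3_{s_3+1}\ge 4$ force $d^i_j\ge 4$ for every relevant $j$, so $2\le u^i_j\le d^i_j-2$. Then $d^i_j-u^i_j=\lceil d^i_j/2\rceil$, and comparing the expression for $K^b$ in Lemma~\ref{l41} with the entries of $\Delta=(\Delta^1,\Delta^2,\Delta^3)\in\R^{2s}$ shows $K^b=\sum_k 1/\Delta_k=2s/\bar{\Delta}_h$. Consequently $8\gamma^2\sigma^2 K^b\log(\cdot)=\tfrac{16\gamma^2\sigma^2 s}{\bar{\Delta}_h}\log(\cdot)=8\sigma^2\log(\cdot)\cdot\tfrac{2\gamma^2 s}{\bar{\Delta}_h}$, the first term inside the last bracket of the statement.

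The crux is bounding $\norm{D\omega}_2^2=\sum_{e(i,j)\in E}(\omega_i-\omega_j)^2$, the sum running over the $n_1-1$ main-branch edges, the bridge edge $e(b,n_1+1)$, and the $n_2-1$ side-branch edges. Here I would use the local formulas from Lemma~\ref{l61} and Subsection~\ref{s32}: on a constant stretch of length $d$ between two consecutive elements of $\{1\}\cup S$ one has $n\omega_j^2=a(d-a)/d$ with $a$ the graph-distance to the nearer of the two bounding active vertices, while on the stretch straddling the branching point $n\omega_j^2=i(b^*-i)/b^*$ with $b^*=b_{s_1+1}+b_{s_1+2}+b_{s_1+s_2+3}$. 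Each such stretch splits into monotone halves, of lengths essentially $\lfloor d/2\rfloor$ and $\lceil d/2\rceil$; on a monotone half the increments are controlled through $(\sqrt{x}-\sqrt{y})^2\le (x-y)^2/(\sqrt{x}+\sqrt{y})^2$, which after summing telescopically gives a bound of the shape $\tfrac{1}{2n}\big(\tfrac52+\log(\text{half-length})\big)$. There are about $2s+3$ monotone pieces in total (the three ``inner'' stretches at $b$ plus the bridge edge amalgamate into essentially three pieces rather than six, because $n\omega^2$ there is a single $i(b^*-i)/b^*$ profile plus one extra descent), and applying concavity of the logarithm (Jensen) to the piece lengths, which sum to at most $n+1$, yields $\norm{D\omega}_2^2\le \tfrac{5(2s+3)}{2n}\log\!\big(\tfrac{n+1}{2s+3}\big)+\tfrac{\zeta}{n}$; the correction $\zeta$ is exactly the overhead incurred at the branching point when one of $b_{s_1+1},b_{s_1+2},b_{s_1+s_2+3}$ is too small for the clean half-stretch argument ($b_{s_1+1}=1$, a side ramification at distance $0$, etc.), which is why it splits into the listed four cases and one falls back on the cruder estimate $(\sqrt{x}-\sqrt{y})^2\le|x-y|$ there, costing at most $b^*/(2n)$, $3/n$, or $b^*/(4n)$ respectively. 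Combining the two estimates with the three free terms gives the displayed inequality; the asymptotic form then follows by choosing $f=f^0$, $S=S_0$ (so that $\norm{(Df^0)_{-S_0}}_1=0$ for $\lambda$ of the stated order), using $\log(4(n-s_0-1)/\delta)=\bigo(\log n)$ and $\log\!\big(\tfrac{n+1}{2s_0+3}\big)=\bigo(\log(n/s_0))$.

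I expect the main obstacle to be the branching-point analysis: verifying that in the generic Case~1 the bridge edge and the three short stretches around $b$ genuinely behave like a single regular stretch (so that no extra term is needed), and pinning down the exact value of $\zeta$ in the degenerate Cases~2)--4) where the $i(b^*-i)/b^*$ profile is truncated or collapses; the rest is the routine calculus of sums of $(\sqrt{a(d-a)/d})$-increments, which already appears in the proof of Corollary~\ref{c31}, plus the bookkeeping needed to match the claimed constants.
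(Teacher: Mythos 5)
Your proposal follows essentially the same route as the paper: plug the bound of Lemma~\ref{l42} into Theorem~\ref{t21}, obtain $K^b\le 2s/\bar{\Delta}_h$ via the choice $u^i_j=\lfloor d^i_j/2\rfloor$, and control $\norm{Dw}_2^2$ using the local antiprojection profiles $i(d-i)/d$ from Section~\ref{sec3} (with the single $i(b^*-i)/b^*$ profile across the branching point), splitting each stretch into monotone halves, applying the geometric-mean/Jensen step to the $2s+3$ half-lengths, and isolating the degenerate branching-point configurations into the four cases that define $\zeta$. The deferred details you flag (the case-by-case values of $\zeta$) are exactly what the paper's Appendix~\ref{appC} computes, and your fallback estimates match the listed constants, so the plan is sound.
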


\begin{proof}[Proof of Lemma \ref{c41}]
See Appendix \ref{appC}.
\end{proof}

%


\section{Extension to more general tree graphs}\label{sec6}

In this section we consider only situations corresponding to Case 1) of Corollary \ref{c41}. This means that we assume that, even when at the ramification point is attached more than one branch, the edge connecting the branch to the ramification point and the consecutive one do not present jumps (i.e. are not elements of the set $S$).

\subsection{Oracle inequality for general tree graphs}

With the insights gained in Section \ref{sec3} we can, by availing ourselves of simple means, prove an oracle inequality for a general tree graph, where the jumps in $S$ are far enough from the branching points, in analogy to Case 1) in Corollary \ref{c41}.

Here as well, we utilize the general approach exposed in Theorem \ref{t21} and we need to handle with care the weighted compatibility constant and find a lower bound for it.

We know that, when we are in (the generalization of) Case 1) of Corollary \ref{c41}, to prove bounds for the compatibility constant, the tree graph can be seen as a collection of path graphs glued together at (some of) their \textbf{extremities}. As seen in Section \ref{sec3}, the length of the antiprojections for the vertices around ramification points depends on all the branches attached to the ramification point in question. Here, for the sake of simplicity, we assume that $d^i_j\ge 4,\forall j, \forall i$, i.e. between consecutive jumps there are at least four vertices as well as there are at least four vertices before the first and after the last jump of each path graph resulting from the decomposition of the tree graph. This is what we call a ``large enough'' tree graph. Indeed, for $d^i_j\ge 4$, we have that $\log(d^i_j)\le 2\log(d^i_j/2)$.

Let $\mathcal{G}$ be a tree graph with the properties exposed above. In particular it can be decomposed into $g$ path graphs. For each of these path graphs, by using the second notation in Subsection \ref{notation}, we define the vectors
$$\Delta^i=( d^i_1, \lceil d^i_2/2 \rceil,\lfloor d^i_2/2 \rfloor, \ldots, \lceil d^i_{s_i}/2 \rceil,\lfloor d^i_{s_i}/2 \rfloor, d^i_{s_i+1} )\in \R^{2s_i}, i \in \{1, \ldots, g \}
$$
and
$$
\abs{\Delta^i}=( \lceil d^i_1/2 \rceil,\lfloor d^i_1/2 \rfloor, \ldots, \lceil d^i_{s_i+1}/2 \rceil,\lfloor d^i_{s_i+1}/2 \rfloor )\in \R^{2s_i+2}, i \in \{1, \ldots, g \}.
$$
Moreover we write
$$ \Delta= (\Delta^1, \ldots, \Delta^g)\in \R^{2s} \text{ and } \abs{\Delta}= (\abs{\Delta}^1, \ldots, \abs{\Delta}^g)\in \R^{2(s+g)}.
$$
We have that for $\mathcal{G}$,
$$
\kappa^s(S)\ge \frac{s+1}{n}\frac{1}{K}, K\le \frac{2s}{\bar{\Delta}_h},
$$
where $\bar{\Delta}_h$ is the harmonic mean of $\Delta$.
Moreover an upper bound for the inverse of the weighted compatibility constant can be computed by upper bounding the squared consecutive pairwise differences of the weigths for the $g$ path graphs.
We thus get that, in analogy to Corollary \ref{c31}
$$\frac{1}{\kappa^2_w(S)}\le \frac{2n}{s+1}\left(\frac{2s}{\bar{\Delta}_h}+ \frac{5}{\gamma^2}\frac{s+g}{n}\log \left(\frac{n}{s+g} \right) \right).
$$

We therefore get the following Corollary
\begin{Coro}[Oracle inequality for a general tree graph]
Let $\mathcal{G}$ be a tree graph, which can be decomposed in $g$ path graphs. Assume that $d^i_j\ge 4, \forall j \in \{ 1, \ldots, s_i+1\}, \forall i \in \{1, \ldots, g \}$. Then
\begin{eqnarray*}
\norm{\widehat{f}-f^0}^2_n  & \leq &  \inf_{f\in\R^n} \left\{\norm{f-f^0}^2_n+4\lambda\norm{(Df)_{-S}}_1 \right\}\\
 &  +  &   {\frac{8\log(2/\delta)\sigma^2}{n} } + {4\sigma^2}\frac{s+1}{n}\\
 &  +  &   8\sigma^2 \log(4(n-s-1)/\delta)\left( \frac{2\gamma^2s}{ \bar{\Delta}_h}+  5\frac{(s+g)}{n}\log\left(\frac{n}{s+g} \right) \right).
\end{eqnarray*}
\end{Coro}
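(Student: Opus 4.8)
The plan is to invoke Theorem \ref{t21} directly and then plug in the lower bound on the weighted compatibility constant that was just derived in the surrounding discussion. Specifically, Theorem \ref{t21} already gives, with probability at least $1-\delta$ and for the stated choice of $\lambda = \gamma\sigma\sqrt{2\log(4(n-s-1)/\delta)/n}$, the oracle inequality
\begin{equation*}
\norm{\widehat{f}-f^0}_n^2 \leq \inf_{f\in\R^n}\left\{\norm{f-f^0}_n^2 + 4\lambda\norm{(Df)_{-S}}_1\right\} + \frac{4\sigma^2}{n}\left((s+1) + 2\log(2/\delta) + \frac{\gamma^2(s+1)}{\kappa_w^2(S)}\log(4(n-s-1)/\delta)\right).
\end{equation*}
So the entire task reduces to substituting the bound $\frac{1}{\kappa_w^2(S)} \le \frac{2n}{s+1}\left(\frac{2s}{\bar{\Delta}_h} + \frac{5}{\gamma^2}\frac{s+g}{n}\log(n/(s+g))\right)$ into the last term. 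The factor $\gamma^2(s+1)/\kappa_w^2(S)$ then becomes $\gamma^2(s+1)\cdot\frac{2n}{s+1}\left(\frac{2s}{\bar{\Delta}_h} + \frac{5}{\gamma^2}\frac{s+g}{n}\log(n/(s+g))\right) = 2n\left(\frac{2\gamma^2 s}{\bar{\Delta}_h} + \frac{5(s+g)}{n}\log(n/(s+g))\right)$, and multiplying by $\frac{4\sigma^2}{n}\log(4(n-s-1)/\delta)$ yields exactly $8\sigma^2\log(4(n-s-1)/\delta)\left(\frac{2\gamma^2 s}{\bar{\Delta}_h} + \frac{5(s+g)}{n}\log(n/(s+g))\right)$, which is the third line of the claimed display. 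The remaining two summands $\frac{8\log(2/\delta)\sigma^2}{n}$ and $4\sigma^2\frac{s+1}{n}$ come verbatim from the $\frac{4\sigma^2}{n}(2\log(2/\delta) + (s+1))$ piece.

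The first substantive step, therefore, is to justify the weighted compatibility bound itself, which is the heart of the matter. Following the path-graph template of Corollary \ref{c31} and its branched analogue Corollary \ref{c41}, I would decompose $\mathcal{G}$ into its $g$ constituent path graphs (legitimate here because we restrict to the generalization of Case 1), where jumps stay at distance $\geq 4$ from all ramification points, so the first and second notations coincide and the antiprojection computations of Section \ref{sec3} localize cleanly). On each path graph $i$ one applies Lemma \ref{l33}-type reasoning: $\kappa_w^2 \gtrsim \frac{s+1}{n}\frac{1}{2(\norm{w}_\infty^2 K + \norm{D^* w}_2^2)}$ with $K \le 2s/\bar{\Delta}_h$ from Lemma \ref{l41}/\ref{l31} applied across all pieces, using $u_j^i = \lfloor d_j^i/2\rfloor$. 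Then one needs the explicit upper bound on $\norm{D^* w}_2^2 = \sum (w_\ell - w_{\ell-1})^2$ restricted to within-path differences. This is where the $d_j^i \ge 4$ hypothesis is used: since $w_j = 1 - \omega_j/\gamma$ and $\omega_j^2 = \norm{A_{\{1\}\cup S} X_j}_2^2/n$ is piecewise of the triangular form $(j^+-j)(j-j^-)/((j^+-j^-)n)$ from Lemma \ref{l61}, each constant segment of length $d$ contributes a controlled amount; summing and using $\log(d_j^i) \le 2\log(d_j^i/2)$ (valid precisely for $d_j^i \ge 4$, and also needed for the segments abutting ramification points via the Section \ref{s32} antiprojection formulas) gives the $\frac{5}{\gamma^2}\frac{s+g}{n}\log(n/(s+g))$ term, where the $s+g$ counts all the internal jump-segments plus the $g$ end-segments of the pieces, and a Jensen/concavity argument converts $\sum \log(d_j^i)$ into $(s+g)\log(n/(s+g))$.

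The main obstacle is precisely this control of $\sum_{\ell}(w_\ell - w_{\ell-1})^2$ near ramification points: on a path graph the weights rise and fall in a single triangular bump between consecutive active indices, but at a branching vertex participating in $K+1$ edges the antiprojection onto $X_1 = (1,\ldots,1)'$ (or onto the nearest active columns) has the form $i(b^*-i)/b^*$ with $b^* = \sum b_\ell$ spanning several branches, so the weight profile in that neighborhood is not simply a within-one-path bump and the naive per-path summation double-counts or misattributes the contribution of the segments adjacent to the ramification point. Resolving this requires the Case 1) restriction (jumps at distance $\ge 4$ from ramifications, so $b_\ell \ge 4$ for all branch-segments meeting the point) together with the observation from Section \ref{s32} that in the final region before a branch end only one coefficient of $\hat\theta^j$ is active and bounded by $1$; with $d_j^i \ge 4$ throughout, the bound $\log(d_j^i) \le 2\log(d_j^i/2)$ absorbs the extra factor that the branching geometry introduces, so that the ramification segments can safely be lumped into the count $s+g$ without worsening the constant. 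Once this bookkeeping is pinned down the rest is the mechanical substitution described above. The final $\bigo$-statement (with $f = f^0$, $S = S_0$, $g$ fixed) then follows by noting $\log(4(n-s-1)/\delta) = \bigo(\log n)$ and reading off the two dominant terms.
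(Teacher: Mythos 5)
Your proposal is correct and follows essentially the same route as the paper: invoke Theorem \ref{t21}, decompose the tree into its $g$ constituent path graphs under the Case 1) restriction so that the localization of Section \ref{sec3} applies, bound $K\le 2s/\bar{\Delta}_h$ via Lemma \ref{l41} with $u^i_j=\lfloor d^i_j/2\rfloor$, and control $\norm{D^*w}_2^2$ segment by segment using $d^i_j\ge 4$ (hence $\log(d^i_j)\le 2\log(d^i_j/2)$) to arrive at the stated bound $\frac{1}{\kappa_w^2(S)}\le \frac{2n}{s+1}\bigl(\frac{2s}{\bar{\Delta}_h}+\frac{5}{\gamma^2}\frac{s+g}{n}\log(n/(s+g))\bigr)$, after which the substitution arithmetic is exactly the paper's. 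Your accounting of the $2(s+g)$ half-segments and of the ramification-point bookkeeping matches the paper's treatment, so there is nothing to add.
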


\begin{remark}
Notice that it is advantageous to choose a decomposition where the path graphs are as large as possible, s.t. $g$ is small and less requirement on the $d^i_j$'s are posed.
\end{remark}

\begin{remark}
This approach is of course not optimal, however it allows us to prove in a simple way a theoretical guarantee for the Edge Lasso estimator if some (not extremely restrictive) requirement on $\mathcal{G}$ and $S$ is satisfied.
\end{remark}

\section{Asymptotic signal pattern recovery: the irrepresentable condition}\label{sec7}

\subsection{Review of the literature on pattern recovery}\label{prec}

Let $Y=X\beta^0+\epsilon, \epsilon\sim\N_n(0,\sigma^2\text{I}_n)$, where
$Y\in \R^n, X\in\R^{n\times p},\beta^0\in \R^p,\epsilon\in\R^n$. Let $S_0:=\left\{j\in[p]:\beta^0_j\not=0 \right\}$ be the active set of $\beta^0$ and $-S_0$ its complement. We are interested in the asymptotic sign recovery properties of the Lasso estimator
\begin{equation*}
\hat{\beta}:=\arg\min_{\beta\in\R^p}\left\{ \norm{Y-X\beta}^2_n+2\lambda\norm{\beta}_1\right\}.
\end{equation*}

\begin{definition}[\textbf{Sign recovery}, Definition 1 in \cite{zhao06}]
We say that an estimator $\hat{\beta}$ recovers the signs of the true coefficients $\beta^0$ if
\begin{equation*}
\text{sgn}(\hat{\beta})=\text{sgn}(\beta^0).
\end{equation*}
We then write
\begin{equation*}
\hat{\beta}=_s\beta^0.
\end{equation*}
\end{definition}

\begin{definition}[\textbf{Pattern recovery}]
We say that an estimator $\hat{f}$ of a signal $f^0$ on a graph $\mathcal{G}$ with incidence matrix $D$ recovers the signal pattern if
\begin{equation*}
D\hat{f}=_s Df^0.
\end{equation*}
\end{definition}

\begin{definition}[\textbf{Strong sign consistency}, Definition 2 in \cite{zhao06}]
We say that the Lasso estimator $\hat{\beta}$ is strongly sign consistent if $\exists \lambda=\lambda(n):$
\begin{equation*}
\lim_{n\to \infty}\pr\left(\hat{\beta}(\lambda)=_s\beta^0 \right)=1
\end{equation*}
\end{definition}

\begin{definition}[\textbf{Strong irrepresentable condition}, \cite{zhao06}]
Without loss of generality we can write
\begin{equation*}
\beta^0=\begin{pmatrix} \beta^0_{S_0}\\ \beta^0_{-S_0} \end{pmatrix}=\begin{pmatrix} \beta^0_{S_0}\\ 0 \end{pmatrix}=: \begin{pmatrix} \beta^0_{1}\\ \beta^0_{2} \end{pmatrix},
\end{equation*}
where 1 and 2 are shorthand notations for $S_0$ and $-S_0$ and
\begin{equation*}
\hat{\Sigma}:=\frac{X'X}{n}=\begin{pmatrix} \hat{\Sigma}_{11} & \hat{\Sigma}_{12}\\
\hat{\Sigma}_{21} & \hat{\Sigma}_{22} \end{pmatrix}.
\end{equation*}
Assume $\hat{\Sigma}_{11}$ and $\hat{\Sigma}_{22}$ are invertible. The strong irrepresentable condition is satisfied if $\exists \eta\in (0,1]:$
\begin{equation*}
\norm{\hat{\Sigma}_{21}\hat{\Sigma}_{11}^{-1}\text{sgn}(\beta^0_1)}_{\infty}\leq 1-\eta
\end{equation*}
\end{definition}

\cite{zhao06} prove (in Their Theorem 4) that under Gaussian noise the strong irrepresentable condition implies strong sign consistency of the Lasso estimator, if $\exists 0\le c_1<c_2\le 1$ and $C_1>0: s_0=\bigo(n^{c_1})$ and $n^{\frac{1-c_2}{2}}\min_{j\in S_0} \abs{\beta^0_j}\ge C_1$. For our setup this means that $s_0$ has to grow more slowly than $\bigo(n)$ and that the magnitude of the smallest nonzero coefficient has to decay (much) slower than $\bigo(n^{-1/2})$.

In the literature, considerable attention has been given to the question whether or not it is possible to consistently recover the  pattern of a piecewise constant signal contaminated with some noise, say Gaussian noise.
In that regard, \cite{qian16}  highlight the so called \textbf{staircase problem}: as soon as there are two consecutive jumps in the same direction in the underlying signal separated by a constant segment, no consistent pattern recovery is possible, since the irrepresentable condition (cfr. \cite{zhao06}) is violated. 

Some cures have been proposed to mitigate the staircase problem. \cite{roja15,otte16} suggest to modify the algorithm for computing the Fused Lasso estimator.
Their strategy is based on the connection made by \cite{roja14} between the Fused Lasso estimator and a sequence of discrete Brownian Bridges.
\cite{owra17} propose instead to normalize the design matrix of the associated Lasso problem, to comply with the irrepresentable condition. Another proposal aimed at complying with the irrepresentable condition is the one by \cite{qian16}, based on the preconditioning of the design matrix with the puffer transformation defined in \cite{jia15}, which results in estimating the jumps of the true signal with the soft-thresholded differences of consecutive observations.

\subsection{Approach to pattern recovery for total variation regularized estimators over tree graphs}

Let us now consider the case of the Edge Lasso on a tree graph rooted at vertex 1.
We saw in Section \ref{sec1} that the problem can be transformed into an ordinary Lasso problem where the first coefficient is not penalized.

We start with the following remark.

\begin{remark}[The irrepresentable condition when some coefficients are not penalized]
Let us consider the Lasso problem where some coefficients are not penalized, i.e. the estimator

\begin{equation*}
\hat{\beta}:=\arg\min_{\beta\in\R^p}\left\{\norm{Y-X\beta}^2_n+2\lambda\norm{\beta_{-U}}_1  \right\},
\end{equation*}
where $U,R,S$ are three subsets partitioning $p$. In particular $U$ is the set of the unpenalized coefficients, $R$ is the set of truly zero coefficients and $S$ is the set of truly nonzero (active) coefficients. We assume the linear model $Y=X\beta^0+\epsilon, \epsilon\sim\N_n(0,\sigma^2\text{I}_n)$. The vector of true coefficients $\beta^0$ can be written as
\begin{equation*}
\beta^0=\begin{pmatrix} \beta^0_{U} \\ \beta^0_{S}\\ 0 \end{pmatrix}.
\end{equation*}
Moreover we write
\begin{equation*}
\frac{X'X}{n}=:\hat{\Sigma}=
\begin{pmatrix}
\hat{\Sigma}_{UU} & \hat{\Sigma}_{US} & \hat{\Sigma}_{UR}\\
\hat{\Sigma}_{SU} & \hat{\Sigma}_{SS} & \hat{\Sigma}_{SR}\\
\hat{\Sigma}_{RU} & \hat{\Sigma}_{RS} & \hat{\Sigma}_{RR}
\end{pmatrix}
.
\end{equation*}
Assume that $\abs{U}\le n$ and that $\hat{\Sigma}_{UU}, \hat{\Sigma}_{SS}$ and $\hat{\Sigma}_{RR}$ are invertible. We can write the irrepresentable condition as
\begin{equation*}
\norm{X_R' A_U X_S(X_S'A_U X_S)^{-1}z^0_S}_{\infty}\le 1-\eta,
\end{equation*}
where $z^0_S=\text{sgn}(\beta^0_S)$, $A_U=\text{I}_n- \Pi_U$ is the antiprojection matrix onto $V_U$, the linear subspace spanned by $X_U$ and $\Pi_{U}:= X_U(X_U'X_U)^{-1}X_U'$ is the orthogonal projection matrix onto $V_U$.

Indeed, write $\delta:=\hat{\beta}-\beta^0$.
The KKT conditions can be written as
\begin{equation}\label{ekkt1}
\hat{\Sigma}_{UU}\delta_U+\hat{\Sigma}_{US}\delta_S+\hat{\Sigma}_{UR}\delta_R-\frac{X_U'\epsilon}{n}=0;
\end{equation}
\begin{equation}\label{ekkt2}
\hat{\Sigma}_{SU}\delta_U+\hat{\Sigma}_{SS}\delta_S + \hat{\Sigma}_{SR}\delta_R-\frac{X_S'\epsilon}{n}+\lambda \hat{z}_S=0, \hat{z}_S\in\delta\norm{\hat{\beta}_S}_1;
\end{equation}
\begin{equation}\label{ekkt3}
\hat{\Sigma}_{RU}\delta_U+\hat{\Sigma}_{RS}\delta_S + \hat{\Sigma}_{RR}\delta_R-\frac{X_R'\epsilon}{n}+\lambda \hat{z}_R=0, \hat{z}_R\in\delta\norm{\hat{\beta}_R}_1.
\end{equation}

By solving Equation \ref{ekkt1} with respect to $\delta_U$, then inserting into Equation \ref{ekkt2} and solving with respect to $\delta_S$, then inserting the expression for $\delta_R$ in the expression for $\delta_U$ to get $\delta_U(\delta_R)$ and $\delta_S(\delta_R)$ and by finally inserting them into Equation \ref{ekkt3}  by analogy with the proof proposed by \cite{zhao06}, we find the irrepresentable condition when some coefficients are not penalized, which writes as follows: $\exists \eta>0:$
\begin{equation*}
\norm{\left(\hat{\Sigma}_{RS}-\hat{\Sigma}_{RU}\hat{\Sigma}_{UU}^{-1}\hat{\Sigma}_{US} \right)\left(\hat{\Sigma}_{SS}-\hat{\Sigma}_{SU}\hat{\Sigma}_{UU}^{-1}\hat{\Sigma}_{US} \right)^{-1}z^0_S}_{\infty}\le 1-\eta,
\end{equation*}
where $z^0_S=\text{sgn}(\beta^0_S)$.

Note that $\Pi_U=\frac{1}{n}X_U \hat{\Sigma}_{UU}^{-1}X_U'$ and we obtain the above expression.

\end{remark}

Thus, by using the notation of the remark above  we let $U=\{1\}$, $S=S_0$ and $R=[n]\setminus (S_0\cup \{1\})$.

\begin{lemma}\label{l71}
We have that 

\begin{equation*}
\norm{X_R'X_{\{1\}\cup  {S_0}}(X_{\{1\}\cup  {S_0}}'X_{\{1\}\cup  {S_0}})^{-1} z^0_{\{1\}\cup  {S_0}}}_{\infty}= \norm{X_R'A_1 X_{  {S_0}}(X_{  {S_0}}'A_1X_{  {S_0}})^{-1} z^0_ {S_0}}_{\infty}.
\end{equation*}
\end{lemma}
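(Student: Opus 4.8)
The plan is to recognize both sides as the image under $X_R'$ of one and the same vector in $\R^n$, obtained by partitioned regression, and then take $\norm{\cdot}_{\infty}$. First I would record two preliminary facts. Since the coefficient indexed by $1$ is left unpenalized, the subgradient entry in position $1$ vanishes, so in $z^0_{\{1\}\cup S_0}$ the component corresponding to the index $1$ is $0$; ordering that index first we may write $z^0_{\{1\}\cup S_0}=(0,z^0_{S_0})$. Moreover $X_{\{1\}\cup S_0}$ has full column rank, being a selection of columns of the invertible matrix $X=P_1$, so $X_{\{1\}\cup S_0}'X_{\{1\}\cup S_0}$ is invertible; and $X_{S_0}'A_1X_{S_0}$, which is exactly the Schur complement of $X_1'X_1=n$ in $X_{\{1\}\cup S_0}'X_{\{1\}\cup S_0}$, is invertible as well (this is the invertibility hypothesis of the Remark above, specialized to $U=\{1\}$).

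Next I set $v:=X_{\{1\}\cup S_0}(X_{\{1\}\cup S_0}'X_{\{1\}\cup S_0})^{-1}z^0_{\{1\}\cup S_0}$ and $u:=A_1X_{S_0}(X_{S_0}'A_1X_{S_0})^{-1}z^0_{S_0}$, and use the characterization: for a vector $c$, one has $y=X_{\{1\}\cup S_0}(X_{\{1\}\cup S_0}'X_{\{1\}\cup S_0})^{-1}c$ iff $y\in\mathrm{col}(X_{\{1\}\cup S_0})$ and $X_{\{1\}\cup S_0}'y=c$, and by full column rank such $y$ is unique. I would then verify that $u$ meets these conditions with $c=z^0_{\{1\}\cup S_0}$: (i) $u\in\mathrm{col}(A_1X_{S_0})\subseteq\mathrm{col}(X_{\{1\}\cup S_0})$, because $A_1X_{S_0}=X_{S_0}-\tfrac1n X_1X_1'X_{S_0}$; (ii) $X_1'u=0$, since $u\in\mathrm{col}(A_1)=(\mathrm{span}\,X_1)^{\perp}$; (iii) $X_{S_0}'u=X_{S_0}'(\Pi_1+A_1)u=X_{S_0}'A_1u=X_{S_0}'A_1A_1X_{S_0}(X_{S_0}'A_1X_{S_0})^{-1}z^0_{S_0}=z^0_{S_0}$, using $A_1^2=A_1$. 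Hence $u=v$, and left-multiplying by $X_R'$ and applying $\norm{\cdot}_{\infty}$ yields the claimed equality. (Equivalently, one may expand $(X_{\{1\}\cup S_0}'X_{\{1\}\cup S_0})^{-1}$ via the $2\times2$ block-inverse formula with Schur complement $X_{S_0}'A_1X_{S_0}$; the $X_1$-term in $v$ then comes out proportional to $z^0_1=0$ and drops, leaving $v=u$.)

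The only genuine subtlety — and the main point to get right — is the bookkeeping around the unpenalized coordinate: the sign vector carries a zero in the position of $\beta_1$ (without this the identity is false, as the case $S_0=\emptyset$ already shows, where the left side is $\tfrac{|z^0_1|}{n}\norm{X_R'X_1}_{\infty}$ and the right side is $0$), and it is precisely this zero that annihilates the spurious $X_1$-contribution. Everything else is the standard Frisch--Waugh--Lovell / Schur-complement manipulation already carried out in the Remark preceding the statement, so the proof is essentially its specialization to $U=\{1\}$, with $\Pi_{\{1\}}=\tfrac1n X_1X_1'$ and $X_1=(1,\dots,1)'$.
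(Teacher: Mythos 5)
Your proof is correct, but it takes a genuinely different route from the paper's. The paper argues by explicit linear algebra: it writes $X_{\{1\}\cup S_0}'X_{\{1\}\cup S_0}$ in $2\times 2$ block form, inverts it via the partitioned-inverse formula (Lemma \ref{invpart}), and then invokes the rank-one update formula (Lemma \ref{lmill81}) to show that the last $s$ columns of $X_{\{1\}\cup S_0}(X_{\{1\}\cup S_0}'X_{\{1\}\cup S_0})^{-1}$ coincide with $A_1X_{S_0}(X_{S_0}'A_1X_{S_0})^{-1}$, the first column being irrelevant because $z^0_1=0$. You avoid all explicit inversion by characterizing $v:=X_{\{1\}\cup S_0}(X_{\{1\}\cup S_0}'X_{\{1\}\cup S_0})^{-1}z^0_{\{1\}\cup S_0}$ as the unique element of the column space of $X_{\{1\}\cup S_0}$ whose inner products with the columns of $X_{\{1\}\cup S_0}$ reproduce $z^0_{\{1\}\cup S_0}$, and then checking that $u:=A_1X_{S_0}(X_{S_0}'A_1X_{S_0})^{-1}z^0_{S_0}$ satisfies exactly these conditions via $X_1'A_1=0$, $A_1^2=A_1$ and $z^0_1=0$; this is the Frisch--Waugh--Lovell argument in coordinate-free form. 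Your version is shorter, needs no auxiliary inversion lemmas, and makes transparent precisely where the vanishing subgradient entry at the unpenalized coordinate enters (the $S_0=\emptyset$ sanity check showing the identity fails without $z^0_1=0$ is a useful observation the paper does not make); the paper's computation yields in exchange the explicit form of $(X_{\{1\}\cup S_0}'X_{\{1\}\cup S_0})^{-1}$, which is however not reused elsewhere. Both arguments ultimately rest on the same two facts, which you state explicitly: full column rank of $X_{\{1\}\cup S_0}$, hence invertibility of the Schur complement $X_{S_0}'A_1X_{S_0}$, and $z^0_{\{1\}\cup S_0}=(0,z^0_{S_0})$.
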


\begin{proof}[Proof of Lemma \ref{l71}]
See Appendix \ref{appA}.
\end{proof}

This means that for tree graphs the irrepresentable condition can be checked for the  ``active set'' $\{1 \} \cup  {S_0}$ instead of $ {S_0}$, but then the first column has to be neglected. This fact is justified, however in a different way then the one we propose, in \cite{qian16} as well.

\begin{remark}[The irrepresentable condition for asymptotic pattern recovery of a signal on a graph does not depend on the orientation of the edges of the graph]

We assume the linear model $Y=f^0+\epsilon, \epsilon\sim\N_n(0,\sigma^2\text{I}_n)$. Then the Edge Lasso can be written as
\begin{equation*}
\hat{f}=\arg\min_{f\in\R^n}\left\{\norm{Y-f}^2_n+2\lambda\norm{(\tilde{I}\tilde{D}f)_{-1}}_1 \right\},
\end{equation*}
where
\begin{equation*}
\tilde{I}\in\mathcal{I}=\left\{\tilde{I}\in\R^n, \tilde{I} \text{ diagonal}, \text{diag}(\tilde{I})\in\{1,-1\}^n \right\}.
\end{equation*}
Define $\beta = \tilde{I}\tilde{D}f$. Then $f=X\tilde{I}\beta$. The linear model assumed becomes $Y=X\tilde{I}\beta^0+\epsilon$ and the estimator
\begin{equation*}
\hat{\beta}=\arg\min_{\beta\in\R^n}\left\{\norm{Y-X\tilde{I}\beta}^2_n+2\lambda \norm{\beta_{-1}}_1 \right\}, \tilde{I}\in\mathcal{I}.
\end{equation*}
It is clear that the now the design matrix is $X\tilde{I}$.
Let us write, without loss of generality
\begin{equation*}
\tilde{I}=\begin{pmatrix} \tilde{I}_{\{1\}\cup S_0} & 0 \\
0 & \tilde{I}_{-(\{1\}\cup S_0)}\end{pmatrix}.
\end{equation*}
According to the Lemma \ref{l71} we can check if $\exists \eta \in (0,1]$:
\begin{equation*}
\norm{\tilde{I}_{-(\{1\}\cup S_0)} X'_{-(\{1\}\cup S_0)} (X'_{\{1\}\cup S_0}X_{\{1\}\cup S_0})^{-1}\tilde{I}_{\{1\}\cup S_0}\tilde{z}^0_{\{1\}\cup S_0}}_{\infty}\le 1-\eta,
\end{equation*}
where $\tilde{z}^0_{\{1\}\cup S_0}= \begin{pmatrix}0 \\ \tilde{z}^0_{S_0} \end{pmatrix}$ and $\tilde{z}^0_{S_0}=\text{sgn}(\beta^0_{S_0})= \tilde{I}_{S_0}\text{sgn}(\tilde{D}f^0)= \tilde{I}_{S_0}\text{sgn}(\bar{\beta}^0)$, where $\bar{\beta}^0=\tilde{D}f^0$, i.e. the vector of truly nonzero jumps when the root has sign $+1$ and the edges are oriented away from it.

Note that $\tilde{I}_{-(\{1\}\cup S_0)}$ does not change the $\ell^{\infty}$-norm and by inserting the expression for $\tilde{z}^0_{\{1\}\cup S_0}$ we get
\begin{equation*}
\norm{ \tilde{I}_{-(\{1\}\cup S_0)} X'_{-(\{1\}\cup S_0)} (X'_{\{1\}\cup S_0}X_{\{1\}\cup S_0})^{-1}\tilde{I}_{\{1\}\cup S_0} \begin{pmatrix}0 & \\ & \tilde{I}_{S_0} \end{pmatrix}\begin{pmatrix}0 \\ \bar{z}^0_{S_0} \end{pmatrix}}_{\infty}\le 1-\eta, \forall \tilde{I}\in \mathcal{I},
\end{equation*}
where $\bar{z}^0_{S_0}=\text{sgn}(\bar{\beta}^0)$.
This means that it is enough to check that $\exists \eta >0$:
\begin{equation*}
\norm{ X'_{-(\{1\}\cup S_0)} (X'_{\{1\}\cup S_0}X_{\{1\}\cup S_0})^{-1}\begin{pmatrix}0 \\ \bar{z}^0_{S_0} \end{pmatrix}}_{\infty}\le 1-\eta, \forall \tilde{I}\in \mathcal{I}
\end{equation*}
to know, for all the orientations of the graph, whether the irrepresentable condition holds. The intuition behind this is that, by choosing the orientation of the edges of the graph, we choose at the same time the sign that the true jumps have across the edges.
\end{remark}

\subsection{Irrepresentable condition for the path graph}

\begin{theorem}[Irrepresentable condition for the transformed Fused Lasso, Theorem 2 in \cite{qian16}]\label{t2qian16}
Consider the model for a piecewise constant signal and let $S_0$ denote the set of indices of the jumps in the true signal, i.e.
\begin{equation*}
S_0=\left\{j: f^0_j\not= f^0_{j-1}, j=2,\cdots, n \right\}=\left\{i_1,\cdots,i_{s_0} \right\},
\end{equation*}
with $s_0=\abs{S_0}$ denoting its cardinality.
The irrepresentable condition for the Edge Lasso on the path graph holds if and only if one of the two following conditions hold:
\begin{itemize}
\item The jump points are consecutive,\\
i.e. $s_0=1$ or $\max_{2\leq k\leq s_0}(i_k-i_{k-1})=1$.
\item All the jumps between constant signal blocks have alternating signs, i.e. 
\begin{equation*}
(f^0_{i_k}-f^0_{i_{k}-1})(f^0_{i_{k+1}}-f^0_{i_{k+1}-1})<0, k=2,\cdots, s_0-1.
\end{equation*}
\end{itemize}
\end{theorem}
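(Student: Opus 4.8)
The plan is to reduce the irrepresentable condition to an explicit coordinate-wise inequality and then extract the dichotomy by an elementary convexity argument. By Lemma~\ref{l71}, together with the reformulation of the irrepresentable condition for an unpenalized coordinate obtained earlier in this section (take $U=\{1\}$, $S=S_0$, $R=[n]\setminus(\{1\}\cup S_0)$), the condition holds for some $\eta\in(0,1]$ if and only if
\[
\norm{X_R' X_{\{1\}\cup S_0}\bigl(X_{\{1\}\cup S_0}'X_{\{1\}\cup S_0}\bigr)^{-1} z^0_{\{1\}\cup S_0}}_\infty\le 1-\eta,
\]
where $z^0_{\{1\}\cup S_0}=(0,\mathrm{sgn}(\beta^0_{S_0}))'$ and $\mathrm{sgn}(\beta^0_{i_k})=\mathrm{sgn}(f^0_{i_k}-f^0_{i_k-1})$. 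As recorded in Section~\ref{sec3}, for $j\in R$ the $j$-th row of $X' X_{\{1\}\cup S_0}(X_{\{1\}\cup S_0}'X_{\{1\}\cup S_0})^{-1}$ is $(\hat\theta^j)'$, so the $j$-th coordinate of the vector inside the norm equals $\langle\hat\theta^j, z^0_{\{1\}\cup S_0}\rangle$.

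Next I would substitute the closed form of $\hat\theta^j$ from Lemma~\ref{l61}. Set $i_0:=1$ and $i_{s_0+1}:=n+1$, and for $j\in R$ let $j^-=i_{l-1}$ and $j^+=i_l$ be the consecutive elements of $\{1\}\cup S_0\cup\{n+1\}$ bracketing $j$; then $\hat\theta^j$ is supported on $\{j^-,j^+\}$ with entries $\tfrac{j^+-j}{j^+-j^-}$ at $j^-$ and $\tfrac{j-j^-}{j^+-j^-}$ at $j^+$ (the latter absent when $j^+=n+1$, by the convention $X_{n+1}=0$). Because $z^0_{\{1\}\cup S_0}$ has a zero in coordinate $1$, three situations occur: for $j<i_1$ the coordinate equals $\tfrac{j-1}{i_1-1}\,\mathrm{sgn}(\beta^0_{i_1})$; for $j>i_{s_0}$ it equals $\tfrac{n+1-j}{n+1-i_{s_0}}\,\mathrm{sgn}(\beta^0_{i_{s_0}})$; and for $i_k<j<i_{k+1}$ it equals the convex combination $\tfrac{i_{k+1}-j}{i_{k+1}-i_k}\,\mathrm{sgn}(\beta^0_{i_k})+\tfrac{j-i_k}{i_{k+1}-i_k}\,\mathrm{sgn}(\beta^0_{i_{k+1}})$.

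The decisive observation is then purely arithmetic. In the two ``boundary'' cases the modulus of the coordinate is at most $1-\tfrac1n$ (since $j\ne i_1$, resp.\ $j\ne i_{s_0}$), so those segments never obstruct the condition. In the ``interior'' case the coordinate is a convex combination of $\mathrm{sgn}(\beta^0_{i_k})\in\{\pm1\}$ and $\mathrm{sgn}(\beta^0_{i_{k+1}})\in\{\pm1\}$: if the two signs agree it equals $\pm1$ exactly, so no $\eta>0$ is admissible as soon as such a segment contains a vertex, i.e.\ $i_{k+1}-i_k\ge2$; if they disagree the modulus is $\tfrac{|i_{k+1}+i_k-2j|}{i_{k+1}-i_k}\le 1-\tfrac{2}{i_{k+1}-i_k}\le 1-\tfrac1n$. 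Maximizing over $j\in R$ shows that the irrepresentable condition holds (with $\eta=1/n$ whenever it holds at all) if and only if $\mathrm{sgn}(f^0_{i_k}-f^0_{i_k-1})\,\mathrm{sgn}(f^0_{i_{k+1}}-f^0_{i_{k+1}-1})<0$ for every $k$ with $i_{k+1}-i_k\ge2$; the two displayed alternatives are precisely the cases where no such $k$ exists (equivalently $s_0=1$ or $\max_k(i_{k+1}-i_k)=1$), respectively where this holds for every consecutive pair. The points requiring care are the bookkeeping around the two boundary blocks (the zeroed first coordinate of $z^0$ and the $X_{n+1}=0$ convention) and, more essentially, making the estimate \emph{quantitative}: the irrepresentable condition demands a fixed $\eta>0$, which is exactly why the analysis must separate interior blocks of length $1$ from those of length $\ge2$.
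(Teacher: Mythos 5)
Your proof is correct and matches the route the paper itself indicates: the paper cites \cite{qian16} for this statement and, in the remark immediately following it, notes that the result can be read off from Section \ref{sec3} and Lemma \ref{l61}, which is exactly the reduction (via Lemma \ref{l71} and the explicit projection coefficients $\hat\theta^j$) that you carry out in detail, and the same machinery underlies the paper's proof of the more general Theorem \ref{t72}. Your quantitative bookkeeping --- separating interior blocks of length one from those of length at least two, where the coordinate becomes a convex combination of the two neighbouring signs and equals $\pm 1$ exactly when they agree --- is sound, and in fact yields the sharper characterization that only gaps $i_{k+1}-i_k\ge 2$ impose a sign constraint.
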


\begin{remark}
This fact can as well be easily read out from the consideration made in Section \ref{sec3} and in particular in Lemma \ref{l61}.
\end{remark}

\subsection{Irrepresentable condition for the path graph with one branch}

\begin{Coro}[Irrepresentable condition for the branched path graph]\label{l44}
Assume $S_0\not=0$.
The irrepresentable condition for the branched path graph is satisfied if and only if one of the following cases holds,
\begin{itemize}
\item $s_0=n-1 $ or $s_0=1 $;
\item$\text{sgn}(\beta^0_{i_{s_1}})= -\text{sgn}(\beta^0_{j_{1}})= -\text{sgn}(\beta^0_{k_{1}})$ and in the subvectors $\beta^0_{1:n_1}$ and $\beta^0_{(b,n_1+1:n)}$ there are no two consecutive nonzero entries of $\beta^0$ with the same sign being separated by some zero entry.
\end{itemize}

Note that:
\begin{itemize}
\item If $i_{s_1}=b$, then the requirement above is relaxed to $\text{sgn}(\beta^0_{j_{1}})= \text{sgn}(\beta^0_{k_{1}})$;
\item If $j_1=b+1$, then the requirement above is relaxed to $\text{sgn}(\beta^0_{i_{s_1}})= -\text{sgn}(\beta^0_{k_{1}})$;
\item If $k_{1}=n_1+1$, then the requirement above is relaxed to $\text{sgn}(\beta^0_{i_{s_1}})= -\text{sgn}(\beta^0_{j_{1}})$.
\end{itemize}
\end{Coro}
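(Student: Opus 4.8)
The plan is to reduce the irrepresentable condition for the branched path graph to a statement about the projection coefficients computed "locally" in Section \ref{sec3}, exactly as Theorem \ref{t2qian16} is read off Lemma \ref{l61} for the path graph. By Lemma \ref{l71} and the orientation remark, the irrepresentable condition holds iff there is $\eta\in(0,1]$ with $\norm{X'_{-(\{1\}\cup S_0)}(X'_{\{1\}\cup S_0}X_{\{1\}\cup S_0})^{-1}\bar z^0}_\infty\le 1-\eta$, where $\bar z^0$ has a zero in the first coordinate and $\text{sgn}(\bar\beta^0)$ on $S_0$. Writing this as $X_R'\Pi_{\{1\}\cup S_0}$ applied to the sign vector, the $j$-th coordinate (for $j\notin\{1\}\cup S_0$) is $\langle \hat\theta^j, z^0_{\{1\}\cup S_0}\rangle$, i.e. a sign-weighted sum of the entries of the projection coefficient vector $\hat\theta^j$. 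So the whole problem is: for every vertex $j$ not in $\{1\}\cup S_0$, is this weighted sum bounded by $1-\eta$ uniformly, for all choices of edge orientations (equivalently, for every sign pattern arising from a true signal)?

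The key steps, in order. First, split the vertices $j\notin\{1\}\cup S_0$ into three groups according to which "local neighbourhood" they fall in: (i) $j$ strictly between two consecutive jumps both lying in the same branch (main, first side, or second side branch) and not straddling $b$; (ii) $j$ before the first jump or after the last jump of a branch; (iii) $j$ in the region around the branching point $b$, i.e. between $i_{s_1}$ (the last jump at or before $b$ in the main branch) and the first jumps $j_1,k_1$ in the two outgoing branches. For group (i), Lemma \ref{l61} gives $\hat\theta^j=(\ldots,0,\tfrac{j^+-j}{j^+-j^-},\tfrac{j-j^-}{j^+-j^-},0,\ldots)$ with the two nonzero entries summing to $1$; the weighted sum is a convex combination of $\pm1$ of the two surrounding jump signs, so it is $\le 1-\eta$ for some $\eta>0$ iff those two signs differ (when they agree it equals $1$), unless the two jumps are adjacent ($j^+=j^-+1$ impossible here since $j$ sits between them — this is where the "consecutive jumps" escape clause $s_0=1$ or all gaps $=1$ comes in, making group (i) empty). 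For group (ii) the antiprojection does not need to rise to $1$, so $\hat\theta^j$ has a single nonzero entry strictly less than $1$ in absolute value, hence the bound holds automatically with room to spare. For group (iii), use the $K=2$ branching-point computation of Subsection \ref{s32}: $\hat\theta^j$ has (generically) nonzero entries at the three surrounding jumps $i_{s_1},j_1,k_1$ with values of the form $1-\tfrac{i}{b^*}$ (or $\tfrac{i}{b^*}$, $-\tfrac{i}{b^*}$) depending on which branch $j$ is in; one checks that the weighted sum has absolute value bounded away from $1$ iff $\text{sgn}(\beta^0_{i_{s_1}})=-\text{sgn}(\beta^0_{j_1})=-\text{sgn}(\beta^0_{k_1})$, and the three "note" relaxations correspond to the degenerate cases $i_{s_1}=b$, $j_1=b+1$, $k_1=n_1+1$ where one of the three jumps collapses onto the branching point and drops out of the local design (so only the remaining two signs must be compared).

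Finally, I would assemble these: the irrepresentable condition holds iff it holds for every $j$ in all three groups simultaneously. Group (ii) imposes nothing. Group (i) being satisfiable forces: either the jumps are all consecutive (gaps $1$, or $s_0=1$ — equivalently $s_0=n-1$ counting the branched-tree edges), or within each "linear stretch" (here $\beta^0_{1:n_1}$ and $\beta^0_{(b,n_1+1:n)}$, which are the two maximal paths through the tree) consecutive nonzero jumps separated by a constant block alternate in sign. Group (iii) adds the condition on the three jumps meeting at $b$, with the stated relaxations. This reproduces exactly the statement of Corollary \ref{l44}. The main obstacle is the branching-point case (iii): one must carefully track, for each possible location of $j$ and each degenerate coincidence among $\{i_{s_1}=b,\ j_1=b+1,\ k_1=n_1+1\}$, which coefficients of $\hat\theta^j$ survive and whether the resulting signed sum can be pushed to $1$; this is a finite but fiddly case analysis, and getting the "if and only if" direction right (exhibiting, whenever a sign condition fails, a specific $j$ and orientation for which the sum equals $1$, so no $\eta>0$ works) is the delicate part. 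I would handle it by reducing to the star-graph design $X^\star$ from Subsection \ref{s32} and computing $\langle\hat\theta^j,z\rangle$ explicitly for each regime, then reading off the sign constraints.
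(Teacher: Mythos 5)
Your proposal follows essentially the same route as the paper: the paper proves this corollary as the $K=2$ special case of Theorem \ref{t72}, whose proof is exactly the reduction you describe — Lemma \ref{l71} plus the orientation remark, then the local projection coefficients of Section \ref{sec3} (Lemma \ref{l61} away from the branching point, the Subsection \ref{s32} star-graph computation around it), followed by the sign-pattern case analysis. The only thing worth noting is that Theorem \ref{t72} also carries the quantitative condition $b_1-1,b_2,\ldots,b_{K+1}<\tfrac{2}{K+1}b^*$, which neither your sketch nor the corollary statement makes explicit, so your treatment is consistent with what is being claimed.
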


\begin{proof}[Proof of Lemma \ref{l44}]
This is a special case of Theorem \ref{t72} and follows directly form it.
\end{proof}

\subsection{The irrepresentable condition for general branching points}

When the graph $\mathcal{G}$ has a branching point where arbitrarily many branches are attached, for the irrepresentable condition to be satisfied it is required, in addition to the absence of staircase patterns along the path graphs building $\mathcal{G}$, that the last jump in the path graph containing the branching point has sign $+$ (resp. $-$) and all the jumps in the other path graphs glued to this branching point have sign $-$ (resp. $+$), with respect to the orientation of the edges away from the root. For the index of the $K+1$ jumps surrounding the ramification point we use the same notation as in Subesction \ref{s32}, i.e we denote them by $\{j_1, \ldots, j_{K+1} \}$.

\begin{theorem}\label{t72}
Consider the Edge Lasso estimator on a general ``large enough'' tree graph. The irrepresentable condition for the corresponding (almost) ordinary Lasso problem is satisfied if and only if for the path connecting branching points the conditions of Theorem \ref{t2qian16} hold and for the true signal around any ramification point involving $K+1$ edges, the jump just before it and the jumps right after it have opposite signs. More formally this last condition writes:
\begin{enumerate}
\item
$\text{sgn}(j_1)\text{sgn}(j_l)<0, \forall l \in \left\{l^*\in \{2, \ldots, K+1\}, b_{l^*}\not= 0 \right\}$
\item
and $\text{sgn}(j_l)\text{sgn}(j_{l'})>0, \forall l, l' \in \left\{l^*\in \{2, \ldots, K+1\}, b_{l^*}\not= 0 \right\}$.
\item
and $b_1-1,b_2, \ldots, b_{K+1}< \frac{2}{K+1} b^*$.
\end{enumerate}
Note that if $b_1=1$, then the condition $\text{sgn}(j_1)\text{sgn}(j_l)<0, \forall l \in \left\{l^*\in \{2, \ldots, K+1\}, b_{l^*}\not= 0 \right\}$ is removed.
\end{theorem}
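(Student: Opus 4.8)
The proof of Theorem \ref{t72} is a matter of assembling the ingredients already developed in Section \ref{sec3}. By Lemma \ref{l71} the irrepresentable condition for the Edge Lasso on a tree graph rooted at vertex $1$ is equivalent to
$$
\norm{X_R' A_1 X_{S_0}(X_{S_0}'A_1 X_{S_0})^{-1} z^0_{S_0}}_{\infty}\le 1-\eta
$$
for some $\eta>0$, and by the orientation‑invariance remark it suffices to work with the orientation of all edges away from the root, so that $z^0_{S_0}=\operatorname{sgn}(\bar\beta^0)$ encodes the signs of the true jumps. First I would reduce the global condition to a collection of \emph{local} conditions, one per ``segment'' of the decomposition of $\mathcal G$ into path pieces and one per ramification point: the key localization fact is already proved in Section \ref{sec3} (Lemma \ref{l61} for path segments, and the analogous star‑graph computation in Subsection \ref{s32} for a branching point with $K+1$ edges), which says that the projection of a column $X_j$ with $j\notin\{1\}\cup S_0$ depends only on the jumps immediately surrounding the constant block containing $j$. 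Hence the $\ell^\infty$ quantity above decomposes into a maximum over independent local blocks, and the irrepresentable condition holds iff it holds in each block with a uniform slack $\eta$.

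Second, for a block lying inside one of the constituent path graphs, the explicit form of $\hat\theta^j$ from Lemma \ref{l61} gives, for $j$ strictly between two consecutive active indices $j^-=i_{l-1}$ and $j^+=i_l$, the value $(X_R'A_1X_{S_0}(X_{S_0}'A_1X_{S_0})^{-1}z^0_{S_0})_j = \frac{j^+-j}{j^+-j^-}z^0_{i_{l-1}} + \frac{j-j^-}{j^+-j^-}z^0_{i_l}$ — a convex combination of the two surrounding jump signs (the boundary blocks, before the first jump or after the last, contribute a single coefficient $<1$ and so are automatically strictly inside $(-1,1)$, using $b_i\ge 1$). Such a convex combination has absolute value $\le 1-\eta$ for some $\eta>0$ uniformly over $j$ in the block precisely when $z^0_{i_{l-1}}=z^0_{i_l}$ forces the value to be exactly $\pm 1$ (no slack), i.e. the interior condition fails exactly when two consecutive jumps along a path share a sign while being separated by at least one vertex — this is the staircase obstruction and recovers Theorem \ref{t2qian16}. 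Third, for a block adjacent to a ramification point participating in $K+1$ edges, I would substitute the explicit $\hat\theta^j$ computed in Subsection \ref{s32} (the two cases $l=1$ and $l\ne 1$, translated through the star incidence matrix $D^\star$). In the $l\ne 1$ case the relevant entry is $\frac{i}{b^*}z^0_{j_1} + (1-\frac{i}{b^*})z^0_{j_{l'}} - \frac{i}{b^*}\sum_{l\ne 1,l'}z^0_{j_l}$ and in the $l=1$ case it is $(1-\frac{i}{b^*})z^0_{j_1}+\frac{i}{b^*}\sum_{l\ne1}z^0_{j_l}$; bounding the supremum of $|\cdot|$ over the admissible range of $i$ (namely $1\le i\le b_1-1$ on the main branch and $1\le i\le b_l$ on branch $l$) and demanding strict inequality $\le 1-\eta$ yields exactly conditions 1), 2), 3): the sign of $j_1$ must oppose the (common) sign of the $j_l$'s so that the largest coefficients cancel rather than add, and the ranges $b_1-1, b_2,\dots,b_{K+1}< \frac{2}{K+1}b^*$ are precisely what keeps $\frac{i}{b^*}$ away from the critical value at which the bound would hit $1$; the stated relaxation when $b_1=1$ is the case where the main‑branch block is empty so no constraint on $\operatorname{sgn}(j_1)$ is generated.

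Finally I would check the converse direction — that violating any one of 1)–3) or the path conditions of Theorem \ref{t2qian16} produces a sequence achieving $\ell^\infty$ value $\to 1$ — by exhibiting the offending $j$ explicitly (the midpoint of a same‑sign staircase block, or the index $i$ closest to the endpoint of an over‑long branch), so that no $\eta>0$ works. \textbf{The main obstacle} I anticipate is not any single computation but the bookkeeping at the ramification point when some of the $b_l$ vanish: one must carefully track which branches actually contribute a jump (the set $\{l^*: b_{l^*}\ne 0\}$ in the statement), handle the degenerate block that has no room for a second active coefficient, and verify that the "$\le 1-\eta$" with a \emph{uniform} $\eta$ is genuinely equivalent to the non‑strict sign pattern — i.e. that the only way to lose the slack is the boundary sign configuration, and that otherwise the finite maximum over the (finitely many) blocks is automatically bounded away from $1$. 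This uniformity over all blocks is where the "large enough" hypothesis ($d^i_j\ge 4$) and the range restrictions in 3) do the real work, and stitching the local slacks into one global $\eta$ is the step I would write out most carefully.
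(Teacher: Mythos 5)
Your proposal is correct and follows essentially the same route as the paper: the paper's proof likewise invokes the explicit projection coefficients from Section \ref{sec3}, writes the condition at a ramification point as $\abs{(1-\alpha,\alpha,\ldots,\alpha)z}<1$ and $\abs{(\alpha,1-\alpha,-\alpha,\ldots,-\alpha)z}<1$ with $\alpha=i/b^*$ (exactly your two cases $l=1$ and $l\neq 1$), and then rules out all sign patterns except the one where the side-branch jumps share a sign opposite to $j_1$, with $\abs{1-(K+1)\alpha}<1$ giving condition 3). Your write-up is in fact somewhat more explicit than the paper's about the localization to blocks, the recovery of Theorem \ref{t2qian16} on the path segments, and the uniformity of the slack $\eta$, which the paper leaves implicit.
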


\begin{proof}[Proof of Theorem \ref{t72}]
See Appendix \ref{appA}.
\end{proof}

\section{Conclusion}\label{sec8}

We refined some details of the approach of \cite{dala17} for proving a sharp oracle inequality for the total variation regularized estimator over the path graph. In particular we decided to follow an approach where a coefficient is left unpenalized and we gave a proof of a lower bound on the compatibility constant which does not use probabilistic arguments.

The key point of this article is that we proved that the approach applied on the path graph can indeed be generalized to a branched graph and further to more general tree graphs. In particular we found a lower bound on the compatibility constant and we generalized the result concerning the irrepresentable condition obtained for the path graph by \cite{qian16}.



\appendix

\section{Proofs of Section \ref{sec1}}\label{appE}

\begin{proof}[Proof of Theorem \ref{t21}]

\underline{\textbf{Deterministic part}}\\

Recall the definition of the estimator
\begin{equation*}
\widehat{\beta}=\arg\min_{\beta\in\R^n}\left\{\norm{Y-X\beta}^2_n+2\lambda\norm{\beta_{-1}}_1 \right\}.
\end{equation*}
The KKT conditions  are
\begin{equation*}
\frac{1}{n}X'(Y-X\widehat{\beta})=\lambda\widehat{z}_{-1}, \widehat{z}_{-1}\in \delta\norm{\widehat{\beta}_{-1}}_1,
\end{equation*}
where $\widehat{z}_{-1}\in\R^n$ is a vector with the first entry equal to zero and the remaining ones equal to the subdifferential of the absolute value of the corresponding entry of $\widehat{\beta}$. Inserting $Y=X\beta^0+\epsilon$ into the KKT conditions and multiplying them once by $\widehat{\beta}$ and once by $\beta$ we obtain
\begin{equation*}
-\frac{1}{n}\widehat{\beta}'X'(X(\widehat{\beta}-\beta^0)-\epsilon)=\lambda\norm{\widehat{\beta}_{-1}}_1
\end{equation*}
and
\begin{equation*}
-\frac{1}{n}{\beta}'X'(X(\widehat{\beta}-\beta^0)-\epsilon)= \lambda \beta_{-1}'\widehat{z}_{-1} \leq \lambda\norm{{\beta}_{-1}}_1,
\end{equation*}
where the last inequality follows by the dual norm inequality and the fact that $\norm{\widehat{z}_{-1}}_{\infty}\le1$. Subtracting the first inequality from the second we get
\begin{equation*}
\frac{1}{n}(\widehat{\beta}-{\beta})'X'(X(\widehat{\beta}-\beta^0)-\epsilon) \leq \lambda(\norm{{\beta}_{-1}}_1-\norm{\widehat{\beta}_{-1}}_1).
\end{equation*}

Using polarization we obtain
\begin{equation*}
\norm{X(\widehat{\beta}-\beta)}^2_n+\norm{X(\widehat{\beta}-\beta^0)}^2_n\leq \norm{X(\beta-\beta^0)}^2_n+\frac{2}{n}(\widehat{\beta}-\beta)'X'\epsilon+2\lambda\left(\norm{\beta_{-1}}_1-\norm{\widehat{\beta}_{-1}}_1\right).
\end{equation*}
Let $S\subset\{2,\ldots, n\}$. We have that
\begin{eqnarray*}
\norm{\beta_{-1}}_1-\norm{\widehat{\beta}_{-1}}_1&=&\norm{\beta_{S}}_1-\norm{\widehat{\beta}_{S}}_1-\norm{\beta_{-(\{1\}\cup S)}}_1-\norm{\widehat{\beta}_{-(\{1\}\cup S)}}_1\\
&+& 2\norm{\beta_{-(\{1\}\cup S)}}_1\\
&\leq& \norm{\beta_{S}-\widehat{\beta}_{S} }_1-\norm{\beta_{-(\{1\}\cup S)} -\widehat{\beta}_{-(\{1\}\cup S)}}_1+2\norm{\beta_{-(\{1\}\cup S)}}_1.
\end{eqnarray*}
Thus we get the ``basic'' inequality
\begin{eqnarray*}
& &\norm{X(\widehat{\beta}-\beta)}^2_n+\norm{X(\widehat{\beta}-\beta^0)}^2_n \leq \norm{X(\beta-\beta^0)}^2_n+4\lambda\norm{\beta_{-(\{1\}\cup S)}}_1\\
&+& \underbrace{ \frac{2}{n}(\widehat{\beta}-\beta)'X'\epsilon+ 2\lambda\left(\norm{(\beta-\widehat{\beta})_{S} }_1-\norm{(\beta -\widehat{\beta})_{-(\{1\}\cup S)}}_1\right)  }_{\text{I}}.
\end{eqnarray*}
We are going to utilize the approach described by \cite{dala17} to handle the remainder term I with care.  Since $\text{I}_n=\Pi_{\{1\}\cup S}+(\text{I}_n-\Pi_{\{1\}\cup S})$, it follows that
\begin{equation*}
(\widehat{\beta}-\beta)'X'\epsilon=(\widehat{\beta}-\beta)'X'\Pi_{\{1\}\cup S}\epsilon+(\widehat{\beta}-\beta)_{-(\{1\}\cup S)}'X_{-(\{1\}\cup S)}'(\text{I}_n-\Pi_{\{1\}\cup S}) \epsilon.
\end{equation*}
Indeed the antiprojection of elements of $V_{\{1\}\cup S}$ is zero.
Note that
\begin{equation*}
(\widehat{\beta}-\beta)_{-(\{1\}\cup S)}'X_{-(\{1\}\cup S)}'(\text{I}_n-\Pi_{\{1\}\cup S}) \epsilon\leq\sum_{j\in{-(\{1\}\cup S)} } \abs{\widehat{\beta}-\beta}_j\abs{X'_j(\text{I}_n-\Pi_{\{1\}\cup S})\epsilon}.
\end{equation*}
Restricting ourselves to the set
\begin{equation*}
F=\left\{\abs{X_j'(\text{I}-\Pi_{\{1\}\cup S})\epsilon}\leq \frac{\lambda n}{\gamma}\frac{\norm{X_j'(\text{I}_n-\Pi_{\{1\}\cup S})}_2}{\sqrt{n}}, \forall j\in{-(\{1\}\cup S)}  \right\},
\end{equation*}
 for $\gamma\geq 1$ 
we obtain
\begin{eqnarray*}
\text{I}&\leq& \frac{2}{n}(\widehat{\beta}-\beta)'X'\Pi_{\{1\}\cup S}\epsilon\\
&+& 2\lambda \left( \norm{(\widehat{\beta}-\beta)_S}_1-\norm{(\widehat{\beta}-\beta)_{-(\{1\}\cup S)}}_1+\norm{(\frac{\omega}{\gamma}\odot(\widehat{\beta}-\beta))_{-(\{1\}\cup S)}}_1\right)\\
&\leq&2\frac{\norm{X(\widehat{\beta}-\beta)}_2}{\sqrt{n}}\frac{\norm{\Pi_{\{1\}\cup S}\epsilon}_2}{\sqrt{n}}\\
&+& 2\lambda\left(\norm{(\widehat{\beta}-\beta)_S}_1-\norm{(w\odot(\widehat{\beta}-\beta))_{-(\{1\}\cup S)}}_1\right),
\end{eqnarray*}

Using the definition of the weighted compatibility constant and the convex conjugate inequality we obtain
\begin{eqnarray*}
\text{I}&\leq& 2\frac{\norm{X(\widehat{\beta}-\beta)}_2}{\sqrt{n}}\left(\frac{\norm{\Pi_{\{1\}\cup S}\epsilon}_2}{\sqrt{n}}+\lambda\frac{\sqrt{ s+1}}{\kappa_w(S)}\right)\\
&\leq& \norm{X(\widehat{\beta}-\beta)}^2_n+\left(\frac{\norm{\Pi_{\{1\}\cup S}\epsilon}_2}{\sqrt{n}}+\lambda\frac{\sqrt{s+1}}{\kappa_w(S)}\right)^2
\end{eqnarray*}
We see that $\norm{X(\widehat{\beta}-\beta)}^2_n$ cancels out and we are left with
\begin{eqnarray*}
\norm{X(\widehat{\beta}-\beta^0)}_n^2&\leq& \inf_{\beta\in\R^n } \left\{\norm{X(\beta-\beta^0)}_n^2 + 4\lambda\norm{\beta_{-(\{1\}\cup S)}}_1 \right\}\\
&+& \left(\frac{\norm{\Pi_{\{1\}\cup S}\epsilon}_2}{\sqrt{n}}+\lambda\frac{\sqrt{ s+1}}{\kappa_w(S)}\right)^2.
\end{eqnarray*}

It now remains to find a lower bound for $\pr(F)$ and a high-probability upper bound for $\norm{\Pi_{\{1\}\cup S}\epsilon}^2_n$.\\

\underline{\textbf{Random part}}\\
\begin{itemize}
\item First, we lower bound $\pr(F)$, thanks to the following lemma.

\begin{lemma}[The maximum of $p$ random variables, Lemma 17.5 in \cite{vand16}]\label{l175vand16}
Let $V_1, \ldots, V_{p}$ be real valued random variables. Assume that $\forall j \in \{1,\ldots, p\}$ and $\forall r>0$
\begin{equation*}
\ex\left[e^{r \abs{V_j}} \right]\leq 2 e^{\frac{r^2}{2}}.
\end{equation*}
Then, $\forall t>0$
\begin{equation*}
\pr\left(\max_{1\leq j \leq p} \abs{V_j}\geq \sqrt{2\log (2p)+2t} \right) \leq e^{-t}.
\end{equation*}
\end{lemma}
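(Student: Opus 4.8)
The plan is to combine a Chernoff-type tail bound for each individual $V_j$ with a union bound, and then choose the threshold so that the resulting estimate collapses to $e^{-t}$.

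First I would fix $j$ and an arbitrary $a>0$, and apply Markov's inequality to the nonnegative random variable $e^{r\abs{V_j}}$ for $r>0$: since $\{\abs{V_j}\ge a\}=\{e^{r\abs{V_j}}\ge e^{ra}\}$, the hypothesis gives
\begin{equation*}
\pr\left(\abs{V_j}\ge a\right)\le e^{-ra}\,\ex\left[e^{r\abs{V_j}}\right]\le 2\,e^{r^2/2-ra}.
\end{equation*}
Then I would optimize the exponent over $r>0$: the function $r\mapsto r^2/2-ra$ is minimized at $r=a>0$, which is an admissible choice because the moment bound is assumed for every $r>0$. This yields the sub-Gaussian tail bound $\pr(\abs{V_j}\ge a)\le 2e^{-a^2/2}$, valid for each $j\in\{1,\dots,p\}$ and every $a>0$.

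Next I would apply the union bound over $j=1,\dots,p$ to obtain
\begin{equation*}
\pr\left(\max_{1\le j\le p}\abs{V_j}\ge a\right)\le\sum_{j=1}^p\pr\left(\abs{V_j}\ge a\right)\le 2p\,e^{-a^2/2}.
\end{equation*}
Finally I would substitute $a=\sqrt{2\log(2p)+2t}$, so that $a^2/2=\log(2p)+t$ and hence $2p\,e^{-a^2/2}=2p\cdot\frac{1}{2p}\,e^{-t}=e^{-t}$, which is exactly the claimed bound.

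I do not anticipate a genuine obstacle here: the argument is the standard maximal-inequality template. The only points requiring a word of care are checking that the optimizing value $r=a$ lies in the range $(0,\infty)$ where the exponential-moment hypothesis is assumed to hold, and noting that the substitution of $a$ is legitimate since $\sqrt{2\log(2p)+2t}>0$ for all $p\ge1$ and $t>0$.
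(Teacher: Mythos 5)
Your proof is correct and is exactly the standard Chernoff-plus-union-bound argument; the paper itself does not reprove this lemma but cites it from \cite{vand16}, where the same derivation (Markov's inequality applied to $e^{r\abs{V_j}}$, optimization at $r=a$, union bound, and the substitution $a=\sqrt{2\log(2p)+2t}$) is used. No gaps.
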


We now apply Lemma \ref{l175vand16} to $F$. Note that $F$ can be written as
\begin{equation*}
F=\left\{\max_{j\in{-(\{1\}\cup S)}}\left|\frac{X_j'(\text{I}_n-\Pi_{\{1\}\cup S})\epsilon}{\sigma\norm{X_j'(\text{I}_n-\Pi_{\{1\}\cup S})}_2}\right|\leq \frac{\lambda \sqrt{n}}{\gamma\sigma}   \right\}.
\end{equation*}

Since $X_j'(\text{I}-\Pi_{\{1\}\cup S})\epsilon\sim\N(0,\sigma^2 \norm{X_j'(\text{I}_n-\Pi_{\{1\}\cup S})}^2_2)$, we obtain that for $V_1, \ldots, V_{n-s-1}\sim\N(0,1)$
\begin{equation*}
F=\left\{\max_{1\leq j\leq n-s-1}\abs{V_j}\leq \frac{\lambda \sqrt{n}}{\gamma\sigma}\right\}.
\end{equation*}

The moment generating function of $\abs{V_j}$ is
\begin{equation*}
\ex\left[e^{r\abs{Vj}} \right]=2(1-\Phi(-r))e^{\frac{r^2}{2}}\leq 2 e^{\frac{r^2}{2}}, \forall r>0
\end{equation*}

Choosing, for some $\delta\in (0,1)$, $\lambda={\gamma\sigma }\sqrt{2\log\left({4(n-s-1)}/{\delta}\right)/n}$ and applying Lemma \ref{l175vand16} with $p=n-s-1$ and $t=\log \left(\frac{2}{\delta} \right)$, we obtain
\begin{equation*}
\pr(F)\geq 1-{\delta}/{2}.
\end{equation*}

\item
Second, we are going to find an high probability upper bound for 
\begin{equation*}
\norm{\Pi_{\{1\}\cup S}\epsilon}^2_n=\frac{\sigma^2}{n}\underbrace{\frac{\norm{\Pi_{\{1\}\cup S}\epsilon}^2_2}{\sigma^2}}_{\sim\chi^2_{s+1}},
\end{equation*}
where $\text{rank}(\Pi_{\{1\}\cup S})=s+1$. We use Lemma 8.6 in \cite{vand16}, which reproves part of Lemma 1 in \cite{laur00}.

\begin{lemma}[The special case of $\chi^2$ random variables, Lemma 1 in \cite{laur00},Lemma 8.6 in \cite{vand16}]\label{l86vand16}
Let $X\sim\chi^2_d$. Then, $\forall t>0$
\begin{equation*}
\pr\left(X\geq d+2\sqrt{dt}+2t \right)\leq e^{-t}
\end{equation*}
\end{lemma}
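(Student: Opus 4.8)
The plan is to prove this classical tail bound by the Chernoff method applied to the moment generating function of a $\chi^2$ variable. First I would write $X=\sum_{i=1}^d Y_i^2$ with $Y_i$ i.i.d.\ $\N(0,1)$, so that for $0\le s<1/2$ one has the exact formula
\begin{equation*}
\ex\bigl[e^{s(X-d)}\bigr]=\bigl(e^{-2s}(1-2s)^{-1}\bigr)^{d/2}=e^{d\phi(s)},\qquad \phi(s):=-s-\tfrac12\log(1-2s).
\end{equation*}
Markov's inequality then gives, for every $s\in(0,1/2)$ and every $x>0$, the bound $\pr(X\ge d+x)\le\exp\bigl(d\phi(s)-sx\bigr)$.

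Second I would control the exponent and optimise over $s$. Expanding the logarithm, $\phi(s)=\tfrac12\sum_{k\ge2}(2s)^k/k\le\tfrac12\sum_{k\ge2}(2s)^k/2=s^2/(1-2s)$ on $[0,1/2)$, so $\pr(X\ge d+x)\le\exp\bigl(ds^2/(1-2s)-sx\bigr)$. Specialising to $x=2\sqrt{dt}+2t$ and taking $s$ with $1-2s=\sqrt d/(\sqrt d+2\sqrt t)$, i.e.\ $s=\sqrt t/(\sqrt d+2\sqrt t)$, one checks by a short computation that $ds^2/(1-2s)-sx=-t$ exactly. An equivalent route keeps the true $\phi$, minimises $d\phi(s)-sx$ at $s=x/(2(d+x))$ to obtain $\pr(X\ge d+x)\le\exp\bigl(\tfrac d2\log(1+x/d)-\tfrac x2\bigr)$, and then, setting $u=\sqrt{t/d}$, reduces the claim for $x=2\sqrt{dt}+2t$ to the elementary inequality $e^{2u}\ge1+2u+2u^2$ for $u\ge0$, which holds by comparing Taylor coefficients.

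Third, combining the two steps yields $\pr\bigl(X\ge d+2\sqrt{dt}+2t\bigr)\le e^{-t}$, which is the assertion. The only mildly technical ingredient is the scalar optimisation over $s$ with the attendant algebra, but both the bound $\phi(s)\le s^2/(1-2s)$ and the inequality $e^{2u}\ge1+2u+2u^2$ are routine; since this is a textbook concentration estimate (indeed it is exactly Lemma~1 in \cite{laur00}), I do not anticipate any real obstacle beyond careful bookkeeping.
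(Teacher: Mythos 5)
Your proof is correct: the Chernoff bound with the exact $\chi^2_d$ moment generating function, the estimate $\phi(s)\le s^2/(1-2s)$, and the choice $s=\sqrt t/(\sqrt d+2\sqrt t)$ (which indeed makes the exponent exactly $-t$) is precisely the argument of Lemma~1 in Laurent and Massart, which is the source the paper cites; the paper itself gives no proof beyond that citation. Nothing further is needed.
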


Note that from Lemma \ref{l86vand16} it follows that
\begin{equation*}
\pr\left(\sqrt{X}\leq \sqrt{d}+\sqrt{2t} \right)\geq \pr\left(X\leq d+2\sqrt{dt}+2t \right)\geq 1-e^{-t}
\end{equation*}

Define
\begin{equation*}
G:=\left\{ \frac{\norm{\Pi_{\{1\}\cup S}\epsilon}_2}{\sqrt{n}} \leq \sqrt{\frac{\sigma^2}{n}}\left(\sqrt{s+1}+\sqrt{2\log\left({2}/{\delta} \right)}\right) \right\}.
\end{equation*}
By applying Lemma \ref{l86vand16} with $t=\log\left({2}/{\delta} \right)$, for some $\delta\in(0,1)$, we get
\begin{equation*}
\pr(G)\geq 1-{\delta}/{2}.
\end{equation*}

\end{itemize}

If we choose $\lambda={\gamma\sigma}\sqrt{2\log\left({4(n-s-1)}/{\delta} \right)/n}$ and apply twice the inequality $(a+b)^2\leq 2(a^2+b^2)$, we get that with probability $\pr(F\cap G)\geq 1-\delta$ the following oracle inequality holds
\begin{eqnarray*}
\norm{X(\widehat{\beta}-\beta^0)}_n^2&\leq& \inf_{\beta\in\R^n } \left\{\norm{X(\beta-\beta^0)}_n^2 + 4\lambda\norm{\beta_{-(\{1\}\cup S)}}_1 \right\}\\
&+&\frac{4\sigma^2}{n}\left((s+1)+2\log\left({2}/{\delta}\right)+\frac{\gamma^2 (s+1)}{\kappa^2_w(S)} \log \left({4(n-s-1)}/{\delta}  \right) \right).
\end{eqnarray*}
The statement of the lemma is obtained using the identity $f=X\beta$.
\end{proof}

\section{Proofs of Section \ref{sec4}}\label{appB}

Let $f\in\R^n$ be a function defined at every vertex of a connected nondegenerate graph $\mathcal{G}$. Moreover let 
\begin{equation*}
f_{(n)}\geq \ldots \geq f_{(1)}
\end{equation*}
be an ordering of $f$, with arbitrary order within tuples. Let $D$ denote the incidence matrix of the graph $\mathcal{G}$.

\begin{lemma}[Lemma 11.9 in \cite{vand18}]\label{lab1}
It holds that
\begin{equation*}
\norm{Df}_1\geq f_{(n)}-f_{(1)}.
\end{equation*}
\end{lemma}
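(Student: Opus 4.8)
The plan is to reduce the statement to the triangle inequality along a path that joins a vertex where $f$ is maximal to one where it is minimal.

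First I would identify $f_{(n)}=\max_{i\in V}f_i$ and $f_{(1)}=\min_{i\in V}f_i$, and choose vertices $u,v\in V$ with $f_u=f_{(n)}$ and $f_v=f_{(1)}$. If $u=v$ (in particular if $\mathcal{G}$ has a single vertex and no edges) then $f_{(n)}-f_{(1)}=0$ and the inequality is trivial, so I may assume $u\neq v$. Since $\mathcal{G}$ is connected, there is a simple path $u=w_0,w_1,\ldots,w_k=v$ in $\mathcal{G}$; in particular the edges $e(w_{l-1},w_l)$ for $l=1,\ldots,k$ are pairwise distinct elements of $E$.

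Next I would telescope and apply the triangle inequality to the values of $f$ along this path:
\[
f_{(n)}-f_{(1)}=f_u-f_v=\sum_{l=1}^{k}\bigl(f_{w_{l-1}}-f_{w_l}\bigr)\le\sum_{l=1}^{k}\abs{f_{w_{l-1}}-f_{w_l}}.
\]
Finally, because the path edges are distinct and every summand $\abs{f_j-f_i}$ appearing in $\norm{Df}_1=\sum_{e(i,j)\in E}\abs{f_j-f_i}$ is nonnegative, the right-hand side above is at most the full sum over $E$, i.e. $\sum_{l=1}^{k}\abs{f_{w_{l-1}}-f_{w_l}}\le\norm{Df}_1$. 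Combining the two displays yields $\norm{Df}_1\ge f_{(n)}-f_{(1)}$.

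There is no genuinely hard step here; the only points requiring a word of care are invoking connectedness to produce the path (and passing to a \emph{simple} path so that its edges are distinct, which also makes the "nondegenerate" hypothesis innocuous beyond ruling out the empty-edge case), together with the trivial bookkeeping when the maximum and minimum of $f$ coincide.
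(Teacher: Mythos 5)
Your proof is correct and follows essentially the same route as the paper: both arguments connect a maximizer of $f$ to a minimizer by a path, bound $f_{(n)}-f_{(1)}$ by the total variation along that path (you make the telescoping and triangle inequality explicit), and then use that the sum over the path's edges is dominated by the sum over all of $E$.
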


\begin{remark}
For the special case of $\mathcal{G}$ being the path graph, we have equality in  Lemma \ref{lab1} when $f$ is nonincreasing or nondecreasing on the graph.
\end{remark}

\begin{proof}[Proof of Lemma \ref{lab1}]
Since $\mathcal{G}$ is connected there is a path between any two vertices.
Therefore there is a path connectiong the vertices where $f$ takes the values $f_{(n)}$ and $f_{(1)}$.
The total variation of a function defined on a graph is nondecreasing in the number of edges of the graph.
Let us now consider $f_P$, the restriction of $f$ on a path $P$ connecting $f_{(1)}$ to $f_{(n)}$.
If $f$ is nondecreasing on the path $P$, then $\norm{Df_P}_1=f_{(n)}-f_{(1)}$, otherwise $\norm{Df_P}_1\geq f_{(n)}-f_{(1)}$.
Since $\mathcal{G}$ has at least as many edges as $P$:
\begin{equation*}
\norm{Df}_1\geq \norm{Df_P}_1\geq f_{(n)}-f_{(1)}.
\end{equation*}
\end{proof}

\begin{lemma}[Lemma 11.10 in \cite{vand18}]\label{lab2}
It holds for any $j \in \{1 , \ldots , n\} $ that
  $$ f_j - \norm{Df}_1 \le f_{(1)} \le {1 \over n} \sum_{i=1}^n |f_i| , $$
  and
  $$ - f_j - \norm{Df}_1  \le - f_{(n)} \le {1 \over n} \sum_{i=1}^n |f_i| . $$
\end{lemma}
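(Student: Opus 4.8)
The plan is to deduce both displayed chains from Lemma \ref{lab1}, together with two elementary observations about the order statistics $f_{(1)} \le \cdots \le f_{(n)}$. First I would establish the second inequality in each chain, namely $f_{(1)} \le \frac{1}{n}\sum_{i=1}^n |f_i|$ and $-f_{(n)} \le \frac{1}{n}\sum_{i=1}^n |f_i|$. For the former: since $f_{(1)}$ is the minimum of the $f_i$, we have $f_{(1)} \le \frac{1}{n}\sum_{i=1}^n f_i \le \frac{1}{n}\sum_{i=1}^n |f_i|$, the first step because the minimum is at most the average and the second because $f_i \le |f_i|$. The argument for $-f_{(n)}$ is the mirror image: $-f_{(n)} = \min_i(-f_i) \le \frac{1}{n}\sum_{i=1}^n(-f_i) \le \frac{1}{n}\sum_{i=1}^n|f_i|$.

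Next I would establish the first inequality in each chain, i.e. $f_j - \norm{Df}_1 \le f_{(1)}$ and $-f_j - \norm{Df}_1 \le -f_{(n)}$, for every fixed $j$. The point is that $f_j \le f_{(n)}$ and hence, by Lemma \ref{lab1} ($\norm{Df}_1 \ge f_{(n)} - f_{(1)}$),
\begin{equation*}
f_j - \norm{Df}_1 \le f_{(n)} - (f_{(n)} - f_{(1)}) = f_{(1)}.
\end{equation*}
Symmetrically, $-f_j \le -f_{(1)}$ for every $j$, so again invoking Lemma \ref{lab1},
\begin{equation*}
-f_j - \norm{Df}_1 \le -f_{(1)} - (f_{(n)} - f_{(1)}) = -f_{(n)}.
\end{equation*}
Concatenating the two halves gives exactly the two displayed chains in the statement.

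There is essentially no obstacle here: the only external input is Lemma \ref{lab1}, which handles the "diameter" bound $\norm{Df}_1 \ge f_{(n)} - f_{(1)}$ using connectedness of $\mathcal{G}$, and everything else is a one-line comparison between a pointwise value, an order statistic, and an average. The mild point to be careful about is bookkeeping of signs in the second chain — making sure that $f_{(n)}$ (the maximum) plays the role that $f_{(1)}$ played in the first chain once we replace $f$ by $-f$ — but this is purely notational. One could in fact phrase the whole proof as: "apply the first chain to $-f$, noting $(-f)_{(1)} = -f_{(n)}$ and $\norm{D(-f)}_1 = \norm{Df}_1$", which makes the symmetry explicit and avoids repeating the argument.
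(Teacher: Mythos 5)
Your proof is correct. The paper itself does not reproduce an argument for this lemma (it only cites Lemma 11.10 of \cite{vand18}), but your deduction --- combining Lemma \ref{lab1} with the elementary facts that the minimum is at most the average and that $\pm f_i \le |f_i|$ --- is exactly the intended one-line argument, and your sign bookkeeping in the second chain is right.
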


\begin{proof}[Proof of Lemma \ref{lab2}]
See \cite{vand18}.
\end{proof}

\begin{lemma}[Lemma 11.11 in \cite{vand18}]\label{lab3}
Let $f \in \R^n$ be defined over a connected graph $\mathcal{G}_f$ whose incidence matrix is $D_f$. The total variation of $f$ is $\norm{D_f f}_1$.
Analogously, let $g \in \R^m$ be defined over a connected graph $\mathcal{G}_g$ whose incidence matrix is $D_g$. The total variation of $g$ is $\norm{D_g f}_1$.
Then for any $j \in \{1 , \ldots , n\}$ and
  $k \in \{1 , \ldots, m \}$
  $$ | f_j - g_k | - \norm{D_f f}_1 - \norm{D_g g}_1 \le {1 \over n} \sum_{i=1}^n | f_i | +
  {1 \over m} \sum_{i=1}^m | g_i | . $$
\end{lemma}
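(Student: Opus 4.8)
The plan is to reduce the two‑function statement to the single‑function bound of Lemma \ref{lab2} by means of the triangle inequality. The key observation is that the pair of inequalities in Lemma \ref{lab2}, taken together, controls not just $f_j$ or $-f_j$ separately but $\abs{f_j}$.

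Concretely, I would first apply Lemma \ref{lab2} to $f$ at the index $j$: its first inequality gives $f_j - \norm{D_f f}_1 \le \tfrac{1}{n}\sum_{i=1}^n \abs{f_i}$, and its second gives $-f_j - \norm{D_f f}_1 \le \tfrac{1}{n}\sum_{i=1}^n \abs{f_i}$. Taking whichever of $f_j,-f_j$ is nonnegative yields
\[
\abs{f_j} \le \norm{D_f f}_1 + \frac{1}{n}\sum_{i=1}^n \abs{f_i}.
\]
Running the identical argument for $g$ at the index $k$ (now with $m$ and $D_g$ in place of $n$ and $D_f$) gives
\[
\abs{g_k} \le \norm{D_g g}_1 + \frac{1}{m}\sum_{i=1}^m \abs{g_i}.
\]
Then I would invoke $\abs{f_j - g_k} \le \abs{f_j} + \abs{g_k}$ and add the two displays above, obtaining
\[
\abs{f_j - g_k} \le \norm{D_f f}_1 + \norm{D_g g}_1 + \frac{1}{n}\sum_{i=1}^n \abs{f_i} + \frac{1}{m}\sum_{i=1}^m \abs{g_i},
\]
and rearranging produces exactly the claimed bound.

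I do not expect a genuine obstacle here: the whole content is the combination of the two estimates of Lemma \ref{lab2} into a bound on $\abs{f_j}$, after which the triangle inequality finishes the proof in one line. If one preferred to avoid even that small observation, an equivalent route is to split into the cases $f_j \ge g_k$ and $f_j < g_k$ and, in each case, use the appropriate one of the two inequalities of Lemma \ref{lab2} for $f$ and the complementary one for $g$; but the symmetric argument via $\abs{f_j}$ and $\abs{g_k}$ is the cleanest.
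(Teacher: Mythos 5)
Your proposal is correct and rests on exactly the same ingredient as the paper, namely the two one-sided bounds of Lemma \ref{lab2}; the paper simply assumes WLOG $f_j \ge g_k$ and writes $|f_j-g_k| = (f_j - \norm{D_f f}_1) + (-g_k - \norm{D_g g}_1)$, applying one inequality of Lemma \ref{lab2} to each term, which is the case-splitting variant you mention at the end. The triangle-inequality route you take is an equally valid, essentially identical argument.
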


\begin{proof}[Proof of Lemma \ref{lab3}]
Suppose without loss of generality that $f_j \ge  g_k$. Then by Lemma \ref{lab2}
  \begin{eqnarray*}
   | f_j - g_k | - \norm{D_f f}_1 - \norm{D_g g}_1&=& \underbrace{(f_j - \norm{D_f f}_1 )}_{\le \sum_{i=1}^n | f_i | /n }  +
   \underbrace{(- g_k - \norm{D_g g}_1)}_{\le \sum_{i=1}^m | g_i | / m }  \\
   & \le & {1 \over n} \sum_{i=1}^n | f_i | +
  {1 \over m} \sum_{i=1}^m | g_i | .
    \end{eqnarray*}
\end{proof}

\begin{proof}[Proof of Lemma \ref{l31}]
See proof of Theorem 6.1 in \cite{vand18} and Appendix \ref{int} for an intuition.
\end{proof}

\begin{proof}[Proof of Corollary \ref{c32}]
See proof of Theorem 6.1 in \cite{vand18}.
%
%
\end{proof}

\begin{proof}[Proof of Lemma \ref{l33}]
See proof of Lemma 9.1 in \cite{vand18} and Appendix \ref{int} for an intuition.
\end{proof}

\begin{proof}[Proof of Corollary \ref{c31}]

Let $A=\text{I}-\Pi_{\{ 1\}\cup S}$ denote the antiprojection matrix on the coulmns of $X$ indexed by $\{ 1\}\cup S$. By using the definition of $w_i$ and $\omega_i$, we have that

\begin{eqnarray*}
\norm{Dw}^2_2=\sum_{i=2}^{n}(w_i-w_{i-1})^2&=& \frac{1}{\gamma^2}\sum_{i=2}^n(\omega_i-\omega_{i-1})^2\\
&=& \frac{1}{\gamma^2 n} \sum_{i=2}^n (\norm{X_i'A}_2-\norm{X_{i-1}'A}_2 )^2.
\end{eqnarray*}

For the path graph we have, thanks to Section \ref{sec3},
\begin{eqnarray*}
\sum_{i=2}^n (w_i-w_{i-1})^2&=& 
\frac{1}{n\gamma^2}\sum_{j=1}^{s+1}\sum_{i=1}^{b_j}\frac{(\sqrt{i(b_j-i)}-\sqrt{(i-1)(b_j-(i-1))})^2}{b_j}\\
&=&  \frac{1}{n\gamma^2}\sum_{j=1}^{s+1}\sum_{i=1}^{b_j}\frac{({i(b_j-i)}-{(i-1)(b_j-(i-1))})^2}{b_j(\sqrt{i(b_j-i)}+\sqrt{(i-1)(b_j-(i-1))})^2}\\
&\le& \frac{1}{n\gamma^2}\sum_{j=1}^{s+1}\sum_{i=1}^{b_j} \frac{(-2i+b_j+1)^2}{b_j(-2i^2+2(b_j+1)i-(b_j+1))}\\
&\leq& \frac{1}{n\gamma^2}\sum_{j=1}^{s+1} b_j\sum_{i=1}^{b_j}\frac{1}{f(i)}.
\end{eqnarray*}

Consider now the function $f(x)=-2x^2+2(c+1)x-(c+1)$, with $c>0$ a constant. This function is strictly concave, has two zeroes at $x=\frac{c+1}{2}\pm \frac{\sqrt{c^2-1}}{2}$ and a global maximum at $x=\frac{c+1}{2}$. We note that $\forall c\geq 1$ the left zero point $\frac{c+1}{2}- \frac{\sqrt{c^2-1}}{2}\leq 1$. Moreover, $\forall x\in [1, c/2]$, $f(x)\geq cx$. Using the symmetry of quadratic functions around the global maximum we obtain, in partial analogy to \cite{dala17},
\begin{eqnarray*}
\norm{Dw}^2_2&=&\sum_{i=2}^{n}(w_i-w_{i-1})^2\le  \frac{1}{\gamma^2 n}\sum_{j=1}^{s+1}b_j\left(\sum_{i=1}^{\lceil b_j/2\rceil}\frac{1}{b_j i}+\sum_{i=1}^{\lfloor b_j/2\rfloor}\frac{1}{b_j i}\right)\\
&\leq& \frac{5}{2\gamma^2 n} \sum_{j=1}^{s+1} \log(\lceil b_j/2\rceil \lfloor b_j/2\rfloor)= \frac{5}{2\gamma^2 n} \log\left(\prod_{i=1}^{2(s+1)}\abs{\Delta}_i \right)\\
&=& \frac{5}{\gamma^2 n} (s+1) \log(\bar{\abs{\Delta}})\le \frac{5}{\gamma^2 n} (s+1) \log(n/(2s+2))\\
&\le& \frac{5}{\gamma^2 n} (s+1) \log(n/(s+1)),
\end{eqnarray*}

where $\bar{\abs{\Delta}}$ is the geometric mean of $\abs{\Delta}$, which is upper bounded by the arithmetic mean of $\abs{\Delta}$, which is $n/(2s+2)$. Moreover the constant $5/2$ and the assumption $b_i\ge 4,\forall i>1$ come from the fact that 
$$
 \frac{\sum_{i=1}^k i^{-1}}{\log i}
$$
is finite only if $i\ge 2$, is decreasing in $i$ and has value approximately 2.16 when $i=2$.
Moreover the vector $\abs{\Delta}\in\R^{2s+2}$ is defined as
$$
\abs{\Delta}\in\R^{2s+2}=\left(  \lfloor{d_1}/{2}\rfloor,\lceil{d_1}/{2}\rceil, \ldots,\lfloor{d_{s+1}}/{2}\rfloor,\lceil{d_{s+1}}/{2}\rceil\right).
$$

We now have to find an upper bound for $K$. Since the choice of $u_j$ is arbitrary, we choose $u_j=\lfloor d_j/2 \rfloor, j\in \{2, \ldots, s \}$, which minimize the upper bound among the integers. We thus have that $K\leq \frac{2s}{\bar{\Delta}_h}$,
where $\bar{\Delta}_h$ is the harmonic mean of $\Delta$.

Finaly, for the path graph we have
\begin{eqnarray*}
\frac{1}{\kappa^2_w(S)}&\le& \frac{2n}{s+1}(K+\norm{Dw}^2_2)\\
&\le& \frac{2n}{\gamma^2(s+1)}\left(\frac{2\gamma^2 s}{\bar{\Delta}_h}+5\frac{s+1}{n}\log(n/(s+1))\right),
\end{eqnarray*}

and we obtain the Corollary \ref{c31}.

\end{proof}

\subsection{Outline of proofs by means of a minimal toy example}\label{int}

For giving an intuition to the reader we present a minimal toy example. Consider the path graph with $n=8$ and let $S=\{3,7 \}$. In this example $d_1=2, d_2=4, u_2=2,d_3=2$. We write
\begin{eqnarray*}
\norm{\beta_S}_1-\norm{\beta_{-(\{1\}\cup S)}}_1 &=& \abs{f_3-f_2}-\abs{f_2-f_1}-\abs{f_4-f_3}\\
&& + \abs{f_7-f_6}-\abs{f_6-f_5}-\abs{f_8-f_7}\\
&& - \abs{f_5-f_4}
\end{eqnarray*}

The idea now is to apply Lemma \ref{lab3} twice, once to the path graphs $(\{1,2\},(1,2))$ and $(\{3,4\},(3,4))$ and once to the path graphs $(\{5,6\},(5,6))$ and $(\{7,8\},(7,8))$. Note that the term $\abs{f_5-f_4}$ is not needed to apply Lemma \ref{lab3} and thus can be left away. We get
$$
\norm{\beta_S}_1-\norm{\beta_{-(\{1\}\cup S)}}_1\le \frac{1}{2}\sum_{i=1}^8 \abs{f_i}\le \sqrt{2}\norm{f}_2,
$$
where the last step follows by the Cauchy-Schwarz inequality. We thus see that we can handle graphs built by modules consisting of small path graphs containing an edge in $S$ and at least one vertex not involved in this edge on each side. The edges connecting these modules can then be neglected when upperbounding $\norm{\beta_S}_1-\norm{\beta_{-(\{1\}\cup S)}}_1$.

In the weighted case we define $g_i=w_if_i, i=1, \ldots, 8$ and write
\begin{eqnarray*}
&&\norm{(w\odot\beta)_S}_1-\norm{(w\odot\beta)_{-(\{1\}\cup S)}}_1 \\
&&\le  w_3\abs{f_3-f_2}-w_2\abs{f_2-f_1}-w_4\abs{f_4-f_3}\\
&& + w_7\abs{f_7-f_6}-w_6\abs{f_6-f_5}-w_8\abs{f_8-f_7}\\
&& \le \abs{g_3-g_2}-\abs{g_2-g_1}-\abs{g_4-g_3}\\
&& + \abs{g_7-g_6}-\abs{g_6-g_5}-\abs{g_8-g_7}\\
&& + \sum_{i=2}^4 \abs{w_i-w_{i-1}}\abs{f_{i-1}}+ \sum_{i=6}^8 \abs{w_i-w_{i-1}}\abs{f_{i-1}}\\
&& \le\sqrt{1/4 \norm{w}^2_2}\norm{f}_2\\
&& + \sqrt{ \sum_{i=2}^4 (w_i-w_{i-1})^2+ \sum_{i=6}^8 (w_i-w_{i-1})^2}\sqrt{\sum_{i=1}^3f^2_i+\sum_{i=5}^7f^2_i}\\
&& \le \left( \sqrt{2}\norm{w}_{\infty}+  \sqrt{ \sum_{i=2}^4 (w_i-w_{i-1})^2+ \sum_{i=6}^8 (w_i-w_{i-1})^2}\right) \norm{f}_2.
\end{eqnarray*}
Here as well, note that the squared difference of the weights across the edge connecting the two modules (smaller but large enough path graphs containing an element of $S$) can be neglected. The procedure exemplified here can be used to handle larger tree graphs, as long as one is able to decompose them in such smaller modules. The fact that squared weights differences can be neglected at the junction of modules will be of use in the proof of Corollary \ref{c41}.

\begin{remark}
The limits of this approach are given by Lemma \ref{lab3}, since its use requires the presence of at least a distinct edge not in $S$ on the left and on the right for each edge in $S$ not sharing vertices with edges used to handle other elements of $S$. Thus $s\leq n/4 $. However, this limitation  is very likely to be of scarce relevance if some kind of minimal length condition holds, see for instance \cite{dala17,gunt17,lin17b}.
\end{remark}
\section{Proofs of Section \ref{sec5}}\label{appC}

\begin{proof}[Proof of Lemma \ref{l41}]
The result follows directly by the proof of Lemma \ref{l31} (i.e. Theorem 6.1 in \cite{vand18}), by the decomposition of the branched path graph into three path graphs. See Appendix \ref{int} for an intuition.
\end{proof}

\begin{proof}[Proof of Corollary \ref{l43}]
The proof follows by direct calculations in analogy to the one of Corollary \ref{c32} (i.e. Theorem 6.1in \cite{vand18}).
\end{proof}

\begin{proof}[Proof of Lemma \ref{l42}]
In the proof of Lemma \ref{l31} and Lemma \ref{l33} (i.e. Theorem 6.1 and Lemma 9.1 in \cite{vand18}) and in Appendix \ref{int} it is made clear, that the use of Lemma \ref{lab3} requires that the edges connecting the smaller pieces into which the path graph is partitioned are taken out of consideration when upper bounding $\norm{\beta_{S}}_1-\norm{\beta _{-(\{1\}\cup S)}}_1$ resp. $\norm{(\beta \odot w)_{S}}_1-\norm{(\beta \odot w)_{-(\{1\}\cup S)}}_1$. This results in an upper bound containing only the square of some of the consecutive pairwise differences between the entries of $w$, the vector of weights. This ``incomplete'' sum can then of course be upper bounded by $\norm{Dw}_2$, where $D$ is the incidence matrix of the path graph.


In the case of the branched path graph the same reasoning applies in particular to the two edges connecting together the three path graphs defined by the second notation. Indeed these can be left away and it is natural to do so. Thus, in full analogy to the procedure exposed in the proofs of Lemma \ref{l31} and \ref{l33} (i.e. Theorem 6.1 and Lemma 9.1 in \cite{vand18}) for the path graph, the statement of Lemma \ref{l42} follows. See Appendix \ref{int} for an intuition
\end{proof}

\begin{proof}[Proof of Corollary \ref{c41}]
We use the calculations  done in Section \ref{sec3}.
By writing

\begin{equation*}
a_i=\sqrt{\frac{(b_j-i)i}{b_j}}, i\in \{0,1, \ldots, b_j \}, j\in [s+3]\setminus \{s_1+1, s_1+2, s_1+s_2+3 \},
\end{equation*}

\begin{equation*}
a^*_i=\sqrt{\frac{(b^*-i)i}{b^*}},i\in \{0,1, \ldots, b^* \} \text{ where } b^*=b_{s_1+1}+b_{ s_1+2}+b_{ s_1+s_2+3}
\end{equation*}

we obtain that

\begin{eqnarray*}
\norm{Dw}^2_2&=&\frac{1}{\gamma^2 n}\left\{\sum_{j=1}^{s_1} \sum_{i=1}^{b_j}(a_i-a_{i-1})^2+\sum_{j=s_1+3}^{s_1+s_2+2} \sum_{i=1}^{b_j}(a_i-a_{i-1})^2\right. \\
&+&\sum_{j=s_1+s_2+4}^{s_1+s_2+s_3+3} \sum_{i=1}^{b_j}(a_i-a_{i-1})^2  \\
&+& \sum_{i=1}^{b_{s_1+1}-1}(a^*_i-a^*_{i-1})^2+\sum_{i=1}^{b_{s_1+2}}(a^*_i-a^*_{i-1})^2+\sum_{i=1}^{b_{s_1+s_2+3}}(a^*_i-a^*_{i-1})^2 \\
&+&\left. (a^*_{b_{s_1+1}-1}-a^*_{b_{s_1+2}})^2+(a^*_{b_{s_1+1}-1}-a^*_{b_{s_1+s_2+3}})^2   \right\}.
\end{eqnarray*}

Indeed we can bound all the terms except the last two ones by applying the reasoning developed for the path graph. 

We are now interested in upper bounding $\norm{D^*w}^2_2$ rather $\norm{Dw}^2_2$.
We have that

\begin{eqnarray*}
\norm{D^*w}^2_2&=&\frac{1}{\gamma^2 n}\left\{\sum_{j=1}^{s_1} \sum_{i=1}^{b_j}(a_i-a_{i-1})^2+\sum_{j=s_1+3}^{s_1+s_2+2} \sum_{i=1}^{b_j}(a_i-a_{i-1})^2\right. \\
&+&\left. \sum_{j=s_1+s_2+4}^{s_1+s_2+s_3+3} \sum_{i=1}^{b_j}(a_i-a_{i-1})^2+ z\right\},
\end{eqnarray*}

where we distinguish the following four cases

\begin{enumerate}[1)]
\item
\begin{eqnarray*}
z&=& \sum_{i=1}^{b_{s_1+1}-1}(a^*_i-a^*_{i-1})^2+\sum_{i=1}^{b_{s_1+2}}(a^*_i-a^*_{i-1})^2+\sum_{i=1}^{b_{s_1+s_2+3}}(a^*_i-a^*_{i-1})^2 \\
&\le & 5/2 \log (\lfloor b^*/3\rfloor\lceil b^*/3\rceil(b^*-\lfloor b^*/3\rfloor-\lceil b^*/3\rceil) ).
\end{eqnarray*}

\item
Assume without loss of generality that $b_{s_1+s_2+3}=0$.
\begin{eqnarray*}
z&=& \sum_{i=1}^{b_{s_1+1}-1}(a^*_i-a^*_{i-1})^2+\sum_{i=1}^{b_{s_1+2}}(a^*_i-a^*_{i-1})^2+(a^*_{b_{s_1+1}-1})^2\\
&\le & \sum_{i=1}^{b^*}(a^*_i-a^*_{i-1})^2+ \max_{i\in[b^*]} (a^*_i)^2\\
&\le& 5/2 \log (\lfloor b^*/2\rfloor\lceil b^*/2\rceil) +b^*/2
\end{eqnarray*}

\item
\begin{enumerate}[a)]
\item Assume without loss of generality that $b_{s_1+s_2+3}=2$.
\begin{eqnarray*}
z&\le& \sum_{i=1}^{b_{s_1+2}}(a^*_i-a^*_{i-1})^2+\sum_{i=1}^{b_{s_1+s_2+3}}(a^*_i-a^*_{i-1})^2+ (a^*_{b^*-3})^2\\
&\le& 5/2 \log (\lfloor b^*/2\rfloor\lceil b^*/2\rceil )+3
\end{eqnarray*}

\item We have the choice, which edge we can leave out of our consideration: either the edge $(b,b+1)$ of the edge $(b,n_1+1)$. In both cases
\begin{equation*}
 z\le\sum_{i=1}^{b_{s_1+2}}(a^*_i-a^*_{i-1})^2+\sum_{i=1}^{b_{s_1+s_2+3}}(a^*_i-a^*_{i-1})^2 +[(a^*_{b_{s_1+2}})^2\wedge(a^*_{b_{s_1+s_2+3}})^2].
\end{equation*}
Denote $y:=b_{s_1+2}$. Then $b_{s_1+s_2+3}=b^*-1-y$.
We get that
\begin{equation*}
(a^*_y)^2\wedge(a^*_{b-y-1})^2=\begin{cases}(a^*_y)^2 &, 3\le y \le (b^*-1)/2\\
(a^*_{b-y-1})^2 &, (b^*-1)/2\le w \le b^*-3,
\end{cases} \le b^*/4.
\end{equation*}
Thus
\begin{equation*}
z\le 5/2 \log (\lfloor b^*/2\rfloor\lceil b^*/2\rceil ) + b^*/4
\end{equation*}

\end{enumerate}

\item
Assume without loss of generality $b_{s_1+s_2+3}=1$, then
\begin{eqnarray*}
z&\le & \sum_{i=1}^{b_{s_1+1}-1}(a^*_i-a^*_{i-1})^2+\sum_{i=1}^{b_{s_1+2}}(a^*_i-a^*_{i-1})^2+\sum_{i=1}^{1}(a^*_i-a^*_{i-1})^2 \\
&+&  (a^*_1-a^*_{b_{s_1+1}-1} )^2.
\end{eqnarray*}
Let $x:= b_{s_1+1}$. We have that
\begin{eqnarray*}
& &\max_{3\le x \le b^*-3}\left(\sqrt{\frac{b^*-1}{b^*}}-\sqrt{\frac{(b^*-x+1)(x-1)}{b^*}} \right)^2\\
&=& \frac{1}{b^*} (b^*/2-\sqrt{b^*-1})^2\le b^*/4,
\end{eqnarray*}
where the maximum is attained at $x=(b^*+2)/2$ and the last inequality holds since $b^*/2\ge \sqrt{b^*-1},\forall b^*\ge 1$.
Therefore
$$
z\le 5/2 \log (\lfloor b^*/3\rfloor\lceil b^*/3\rceil(b^*-\lfloor b^*/3\rfloor-\lceil b^*/3\rceil))+ b^*/4
$$
\end{enumerate}
Now define the vectors
$$
\abs{\Delta}^i:=\begin{cases}
\left(\lfloor d^i_1/2 \rfloor ,\lceil d^i_1/2 \rceil, \ldots,\lfloor d^i_{s_i}/2 \rfloor,\lceil d^i_{s_i}/2 \rceil , \delta^i \right),i=1\\
\left( \delta^i,\lfloor d^i_2/2 \rfloor ,\lceil d^i_2/2 \rceil, \ldots,\lfloor d^i_{s_i+1}/2 \rfloor,\lceil d^i_{s_i+1}/2 \rceil\right),i=2,3
\end{cases}, \in \R^{2s_i+1}.
$$
We can distinguish the following four cases:

\begin{enumerate}[1)]
\item $(\delta^1,\delta^2,\delta^3)=(\lfloor b^*/3\rfloor,\lceil b^*/3\rceil,b^*-\lfloor b^*/3\rfloor-\lceil b^*/3\rceil)$ in any order;

\item $\delta^2=1$ or $\delta^3=1$ and the nonzero $\delta$'s take values $\lfloor b^*/2\rfloor$ and $\lceil b^*/2\rceil$;

\item See Case 2), however with $\delta^1=1$;

\item See Case 1).

\end{enumerate}

Let $\abs{\Delta}:=(\abs{\Delta}^1,\abs{\Delta}^2,\abs{\Delta}^3)\in\R^{2s+3}$.

In analogy to the case of the path graph, see Proof of Corollary \ref{c31} in Appendix \ref{appB}, we can find the bound
\begin{eqnarray*}
\norm{D^*w}^2_2&\le& (5/2)\log\left(\prod_{i=1}^{2s+3}\abs{\Delta}_i \right)+ \zeta\\
&\le& (5/2) (2s+3)\log\left(\frac{n+1}{2s+3} \right)+ \zeta,
\end{eqnarray*}
where
$$
\zeta=\begin{cases}
0 &, \text{ Case 1)}\\
b^*/2 &,\text{ Case 2)}\\
3 &,\text{ Case 3)a)} \\
b^*/4 &,\text{ Case 3)b)}\\
b^*/4 &,\text{ Case 4)}
\end{cases}.
$$

For the compatibility constant we have that $K_b\leq \frac{2s}{\bar{\Delta}_h}$ and we obtain an upper bound for the reciprocal of the weighted compatibility constant
\begin{equation*}
\frac{1}{\kappa^2_w(S)}\leq \frac{2n}{\gamma^2(s+1)}\left(\frac{2\gamma^2 s}{\bar{\Delta}_h}+\frac{5(2s+3)\log (n+1)}{2n}+ \frac{\zeta}{n} \right),
\end{equation*}
where $\zeta$ is as above.
We therefore get Corollary \ref{c41}.
\end{proof}

\section{Proofs of Section \ref{sec7}}\label{appA}

\subsection{Preliminaries}

We will need the following results.
\begin{lemma}[The inverse of a partitioned matrix]\label{invpart}
Let
\begin{equation*}
A=\begin{pmatrix*}[l] A_{11} & A_{12}\\ A_{21} & A_{22}
\end{pmatrix*}
\end{equation*}
where $A_{11}$ and $A_{22}$ are invertible matrices and $A_{11}-A_{12}A_{22}^{-1}A_{21}$ and $A_{22}-A_{21}A_{11}^{-1}A_{12} $ are invertible as well. 
Then
\begin{equation*}
A^{-1}=\begin{pmatrix*}[l]
(A_{11}-A_{12}A_{22}^{-1}A_{21})^{-1} & -(A_{11}-A_{12}A_{22}^{-1}A_{21})^{-1}A_{12}A_{22}^{-1}\\
-(A_{22}-A_{21}A_{11}^{-1}A_{12})^{-1}A_{21}A_{11}^{-1} & (A_{22}-A_{21}A_{11}^{-1}A_{12})^{-1}
\end{pmatrix*}
\end{equation*}
\end{lemma}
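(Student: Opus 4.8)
The plan is to prove Lemma~\ref{invpart} by the standard block (Schur complement) factorization, which reduces the inversion of $A$ to inverting two unipotent block-triangular factors (immediate) and two invertible block-diagonal factors.

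First I would write $S := A_{11} - A_{12}A_{22}^{-1}A_{21}$ and $T := A_{22} - A_{21}A_{11}^{-1}A_{12}$ for the two Schur complements, both invertible by hypothesis, and verify by direct block multiplication the identity
\[
A = \begin{pmatrix} I & A_{12}A_{22}^{-1} \\ 0 & I \end{pmatrix}\begin{pmatrix} S & 0 \\ 0 & A_{22} \end{pmatrix}\begin{pmatrix} I & 0 \\ A_{22}^{-1}A_{21} & I \end{pmatrix}.
\]
The two outer factors are unipotent and block triangular, hence invertible with inverses obtained by negating the off-diagonal block, and the middle factor is invertible because $S$ and $A_{22}$ are. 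Therefore $A$ is invertible, and multiplying the three inverses in the reverse order yields
\[
A^{-1} = \begin{pmatrix} S^{-1} & -S^{-1}A_{12}A_{22}^{-1} \\ -A_{22}^{-1}A_{21}S^{-1} & A_{22}^{-1} + A_{22}^{-1}A_{21}S^{-1}A_{12}A_{22}^{-1} \end{pmatrix}.
\]

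Next I would repeat the argument with the roles of $A_{11}$ and $A_{22}$ interchanged, factoring through the Schur complement of $A_{11}$,
\[
A = \begin{pmatrix} I & 0 \\ A_{21}A_{11}^{-1} & I \end{pmatrix}\begin{pmatrix} A_{11} & 0 \\ 0 & T \end{pmatrix}\begin{pmatrix} I & A_{11}^{-1}A_{12} \\ 0 & I \end{pmatrix},
\]
which in the same way gives
\[
A^{-1} = \begin{pmatrix} A_{11}^{-1} + A_{11}^{-1}A_{12}T^{-1}A_{21}A_{11}^{-1} & -A_{11}^{-1}A_{12}T^{-1} \\ -T^{-1}A_{21}A_{11}^{-1} & T^{-1} \end{pmatrix}.
\]
Since the inverse of a matrix is unique, these two expressions for $A^{-1}$ agree block by block. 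Reading the $(1,1)$ and $(1,2)$ blocks off the first expression and the $(2,1)$ and $(2,2)$ blocks off the second produces exactly
\[
A^{-1} = \begin{pmatrix} S^{-1} & -S^{-1}A_{12}A_{22}^{-1} \\ -T^{-1}A_{21}A_{11}^{-1} & T^{-1} \end{pmatrix},
\]
which is the claimed formula.

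I do not expect any genuine obstacle here: the whole argument is routine block-matrix algebra. The only points needing mild care are checking that the two triangular--diagonal--triangular products really reproduce $A$ (using $S + A_{12}A_{22}^{-1}A_{21} = A_{11}$ and $T + A_{21}A_{11}^{-1}A_{12} = A_{22}$), and observing that the hypotheses --- invertibility of $A_{11}$, $A_{22}$, $S$ and $T$ --- are precisely what make both factorizations legitimate and all their factors invertible. As an alternative one could avoid the second factorization and instead verify the ``push-through'' identities $S^{-1}A_{12}A_{22}^{-1} = A_{11}^{-1}A_{12}T^{-1}$ and $A_{22}^{-1}A_{21}S^{-1} = T^{-1}A_{21}A_{11}^{-1}$ directly from $A_{12}A_{22}^{-1}T = SA_{11}^{-1}A_{12}$ and its analogue, but invoking uniqueness of the inverse is the shortest route.
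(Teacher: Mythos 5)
Your proof is correct: the block LDU (Schur complement) factorization argument is the standard derivation, both factorizations check out by direct multiplication, and combining the $(1,1)$, $(1,2)$ blocks from the first with the $(2,1)$, $(2,2)$ blocks from the second via uniqueness of the inverse yields exactly the stated formula. Note, however, that the paper itself offers no proof of this lemma at all --- it is recorded in the appendix as a standard auxiliary fact --- so there is no argument of the authors' to compare yours against; your write-up simply supplies the routine verification they omitted.
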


\begin{lemma}[The inverse of the sum of two matrices, \cite{mill81}]\label{lmill81}
Let $G$ and $G+E$ be invertible matrices, where $E$ is a matrix of rank one.
Let $g:=\text{trace}(EG^{-1})$.

Then $g\not= -1$ and
\begin{equation*}
(G+E)^{-1}=G^{-1}-\frac{1}{1+g} G^{-1}EG^{-1}.
\end{equation*}
\end{lemma}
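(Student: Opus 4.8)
The plan is to reduce the claim to the Sherman--Morrison formula by exploiting the rank-one structure of $E$. Since $\text{rank}(E)=1$, the column space of $E$ is spanned by a single vector $u\in\R^n$, so every column of $E$ is a scalar multiple of $u$; collecting these scalars into a vector $v\in\R^n$ yields the factorization $E=uv'$. With this representation the cyclic invariance of the trace gives $g=\text{trace}(EG^{-1})=\text{trace}(uv'G^{-1})=\text{trace}(v'G^{-1}u)=v'G^{-1}u$, so $g$ is simply the scalar $v'G^{-1}u$.

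Next I would rule out $g=-1$. By the matrix determinant lemma, $\det(G+E)=\det(G+uv')=\det(G)\,\det(\text{I}+G^{-1}uv')=\det(G)\,(1+v'G^{-1}u)=\det(G)(1+g)$. Since $G$ and $G+E$ are both invertible we have $\det(G)\neq0$ and $\det(G+E)\neq0$, hence $1+g\neq0$, i.e.\ $g\neq-1$.

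Finally, $g\neq-1$ makes $H:=G^{-1}-\frac{1}{1+g}G^{-1}EG^{-1}$ well defined, and it remains to verify $(G+E)H=\text{I}$. Expanding, $(G+E)H=\text{I}+EG^{-1}-\frac{1}{1+g}EG^{-1}-\frac{1}{1+g}EG^{-1}EG^{-1}$, and using $EG^{-1}E=uv'G^{-1}uv'=(v'G^{-1}u)\,uv'=gE$ this collapses to $\text{I}+EG^{-1}\bigl(1-\frac{1}{1+g}-\frac{g}{1+g}\bigr)=\text{I}$; the check $H(G+E)=\text{I}$ is identical. (Alternatively one could derive the formula from Lemma \ref{invpart} applied to a suitably bordered $(n+1)\times(n+1)$ matrix, but the direct verification is shorter.) The only steps requiring a little care are the passage from ``$E$ has rank one'' to the explicit factorization $E=uv'$ with the accompanying trace identity, and the determinant computation that excludes $g=-1$; the rest is routine algebra, so I do not anticipate a genuine obstacle.
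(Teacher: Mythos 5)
Your proof is correct. The paper itself gives no proof of this lemma --- it is stated in the preliminaries of Appendix D and simply attributed to \cite{mill81} --- so there is no internal argument to compare against; you have supplied a complete, self-contained derivation where the paper relies on a citation. Your route is the standard Sherman--Morrison one: factor $E=uv'$, identify $g=\operatorname{trace}(EG^{-1})=v'G^{-1}u$ by cyclicity of the trace, exclude $g=-1$ via the matrix determinant lemma, and verify the inverse by direct multiplication using $EG^{-1}E=gE$; all of these steps check out, including the cancellation $1-\frac{1}{1+g}-\frac{g}{1+g}=0$. One small remark: if you prefer not to invoke the matrix determinant lemma as a black box, the exclusion of $g=-1$ follows even more directly from $(G+uv')G^{-1}u=(1+g)u$, since $u\neq 0$ (as $E$ has rank one) and $G^{-1}u\neq 0$ would otherwise lie in the kernel of the invertible matrix $G+E$. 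Either way the argument is sound.
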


%
%
%

\textbf{Inverse of symmetric matrices}
It is known that the inverse of a symmetric matrix is symmetric as well. This fact has relevance in Lemma \ref{invpart}, where 
\begin{equation*}
(A_{11}-A_{12}A_{22}^{-1}A_{21})^{-1}A_{12}A_{22}^{-1}  =  (A_{22}-A_{21}A_{11}^{-1}A_{12})^{-1}A_{21}A_{11}^{-1} ,
\end{equation*}
if $A$ is symmetric.

%
%
%

\subsection{Proofs}
\begin{proof}[Proof of Lemma \ref{l71}]
Then $U=\{ 1 \}$ and $X$ is the path matrix with reference vertex 1 of the graph. It follows that
$X_1=\text{\textbf{1}}$, $X_1' X_1=n$ and $\Pi_1=\frac{\mathbb{I}_n}{n}$, where $\mathbb{I}_n\in\R^{n\times n}$ is a matrix only consisting of ones.

We want to show that the last $s$ conlumns of 
\begin{equation*}
X_R'X_{\{1\}\cup  S_0}(X_{\{1\}\cup  S_0}'X_{\{1\}\cup  S_0})^{-1}
\end{equation*}
are the same as
\begin{equation*}
X_R'A_1 X_{  S_0}(X_{  S_0}'A_1X_{  S_0})^{-1},
\end{equation*}
i.e. the last $s$ columns of 
\begin{equation*}
X_{\{1\}\cup  S_0}(X_{\{1\}\cup  S_0}'X_{\{1\}\cup  S_0})^{-1}
\end{equation*}
are the same as
\begin{equation*}
A_1 X_{  S_0}(X_{  S_0}'A_1X_{  S_0})^{-1}.
\end{equation*}
We start by writing
\begin{equation*}
X_{\{1\}\cup  S_0}'X_{\{1\}\cup  S_0}= n \begin{pmatrix} 1 & \mu \\
\mu & \hat{\Sigma}_{ S_0 S_0} \end{pmatrix},
\end{equation*}
where $\mu$ (resp. $\mu'$) is the first column (resp. row) of $\hat{\Sigma}_{ S_0 S_0}$. Note that \begin{equation*}
X_{S_0}'A_1X_{ S_0}= n(\hat{\Sigma}_{ {S_0} {S_0}}- \mu\mu').
\end{equation*}
By using the formula for the inverse of a partitioned matrix (see Lemma \ref{invpart}) we get that
\begin{equation*}
(X_{\{1\}\cup  {S_0}}'X_{\{1\}\cup  {S_0}})^{-1}=\begin{pmatrix} \frac{1}{n(1-\mu_1)} & \frac{-1}{n(1-\mu_1)} e_1' \\
\frac{-1}{n(1-\mu_1)} e_1 & (X_ {S_0}'A_1 X_ {S_0})^{-1}
\end{pmatrix},
\end{equation*}
where $e_1=(1,0,\ldots,0)\in \R^{s}$.
As a consequence we can perform the following multiplication:
\begin{equation*}
X_{\{1\}\cup  {S_0}}(X_{\{1\}\cup  {S_0}}'X_{\{1\}\cup  {S_0}})^{-1}= \begin{pmatrix}  \frac{1}{n(1-\mu1)}(X_1-X_ {S_0} e_1) & X_ {S_0} (X_ {S_0}'A_1 X_ {S_0})^{-1}-\frac{1}{n(1-\mu_1)}X_1e_1' \end{pmatrix}.
\end{equation*}
We now develop $A_1 X_ {S_0} (X_ {S_0}'A_1X_ {S_0})^{-1}$ to see if it coincides with the second entry of the matrix we have obtained.
In particular
\begin{eqnarray*}
A_1 X_ {S_0} (X_ {S_0}' A_1 X_ {S_0})^{-1}&=& (\text{I}_n-\Pi_1)X_ {S_0}(X_ {S_0}' A_1 X_ {S_0})^{-1}\\
&=& X_ {S_0}(X_ {S_0}' A_1 X_ {S_0})^{-1}-\frac{X_1\mu' }{n}(\hat{\Sigma}-\mu\mu')^{-1}.
\end{eqnarray*}
By using Lemma \ref{lmill81} we can write the second term as
\begin{eqnarray*}
-\frac{X_1\mu' }{n}(\hat{\Sigma}_{S_0 S_0}-\mu\mu')^{-1}&=&-\frac{X_1\mu' }{n}(\hat{\Sigma}_{ {S_0} {S_0}}^{-1}+ \frac{1}{1-\mu_1})\begin{pmatrix} 1 & 0 & \ldots & 0 \\
0 & 0 & \ldots & 0 \\
\vdots & \vdots & \ddots & \vdots \\
0 & 0 & \ldots & 0
\end{pmatrix}
\\
&=& \frac{-X_1e_1'}{n}\left(1+ \frac{\mu_1}{1-\mu_1} \right)= \frac{-1}{n(1-\mu_1)}X_1 e_1'.
\end{eqnarray*}
In the KKT conditions we note that $z^0_1=0$ (indeed we have the usual normal equations for coefficients not penalized) and thus we establish the desired equality.
\end{proof}

\begin{proof}[Proof of Theorem \ref{t72}]
We refer to Section \ref{sec3} 
for the calculation of the projection coefficients.

Let us define
\begin{equation*}
\alpha(i):=\frac{i}{b^*}, i=\{1, \ldots, b^*-1 \}.
\end{equation*}
We now select an $i$ and write $\alpha=\alpha(i)$. We get that the irrepresentable condition is satisfied for a signal pattern $z\in \{-1,1\}^{K+1}$ if $\forall i \le \min\{b_1-1,b_2, \ldots, b_{K+1}\}$
\begin{enumerate}
\item $\abs{(1-\alpha,\alpha, \ldots, \alpha) z}<1$ and
\item $\abs{(\alpha,1-\alpha, -\alpha, \ldots, -\alpha) z}<1$ as well as this has to hold for any of the $K$ possible permutations of the last $K$ elements of the vector $(\alpha,1-\alpha, -\alpha, \ldots, -\alpha)'\in \R^{K+1}$.
\end{enumerate}
We now want to find the signal patterns $z$ for which the irrepresentable condition is satisfied.

Consider the first condition: it excludes the signal pattern where all the jumps have the same sign.

Thus, in the following assume w.l.o.g. that $z_1=1$.
Now we look at the second condition. We are going to consider the cases where $p$ of the $K$ last elements of the vector $(\alpha,1-\alpha, -\alpha, \ldots, -\alpha)$ get the sign $+$ and $K-p$ get the sign $-$. We look for the linear combination with the highest absolute value. This can be seen as finding the linear combination $L$ of $(\alpha, -\alpha, \ldots, -\alpha)$  determined by $p$ and then adding $\text{sgn}(L)$ to it. We scan the cases $p=1, \ldots, K-1$, since the case $p=K$ is already discarded by looking at the first condition.

For $p=1, \ldots, \lfloor (K+1)/2 \rfloor$, we have that $K+1-2p>0$, thus we assign a $+$ sign to $(1-\alpha)$ and get $1+(K+1-2p)\alpha>1$ and the irrepresentable condition is violated.

For $p=\lceil (K+1)/2 \rceil, \ldots, K-1$, we have that $K+1-2p<0$, thus we assign a $-$ sign to $(1-\alpha)$ and get $-1+(K+1-2p)\alpha<-1$ and the irrepresentable condition is violated.

If $K$ is odd, for $p=(K+1)/2$, we have that $K+1-2p=0$ and the irrepresentable condition is violated, since the linear combination gives $\pm 1$.

Thus, it only remains to consider $p=0$. For $p=0$ we get the condition $\abs{1-(K+1)\alpha}<1$ from the first as well as from the second condition above.
This condition is satisfied whenever $\alpha< 2/(K+1)$, i.e.
$$
i< \frac{2}{K+1} b^*
$$
This means that if any of $b_1-1, b_2, \ldots, b_{K+1}$ exceeds $\frac{2b^*}{K+1}$, then the irrepresentable condition is not satisfied.

\end{proof}

\newpage

\bibliographystyle{imsart-nameyear}

\bibliography{/Users/fortelli/PhD/library}

\begin{thebibliography}{23}

\bibitem[\protect\citeauthoryear{Bapat}{2014}]{bapa14}
\begin{bbook}[author]
\bauthor{\bsnm{Bapat},~\bfnm{Ravindra~B.}\binits{R.~B.}}
(\byear{2014}).
\btitle{{Graphs and Matrices}}.
\bpublisher{Springer}, \baddress{London}.
\end{bbook}
\endbibitem

\bibitem[\protect\citeauthoryear{Dalalyan, Hebiri and Lederer}{2017}]{dala17}
\begin{barticle}[author]
\bauthor{\bsnm{Dalalyan},~\bfnm{Arnak~S}\binits{A.~S.}},
  \bauthor{\bsnm{Hebiri},~\bfnm{Mohamed}\binits{M.}} \AND
  \bauthor{\bsnm{Lederer},~\bfnm{Johannes}\binits{J.}}
(\byear{2017}).
\btitle{{On the prediction performance of the Lasso}}.
\bjournal{Bernoulli}
\bvolume{23}
\bpages{552--581}.
\end{barticle}
\endbibitem

\bibitem[\protect\citeauthoryear{Donoho and Johnstone}{1998}]{dono98}
\begin{barticle}[author]
\bauthor{\bsnm{Donoho},~\bfnm{David~L}\binits{D.~L.}} \AND
  \bauthor{\bsnm{Johnstone},~\bfnm{Iain~M}\binits{I.~M.}}
(\byear{1998}).
\btitle{{Minimax estimation via wavelet shrinkage}}.
\bjournal{The Annals of Statistics}
\bvolume{26}
\bpages{879--921}.
\bdoi{10.1214/aos/1024691081}
\end{barticle}
\endbibitem

\bibitem[\protect\citeauthoryear{Friedman et~al.}{2007}]{frie07}
\begin{barticle}[author]
\bauthor{\bsnm{Friedman},~\bfnm{Jerome}\binits{J.}},
  \bauthor{\bsnm{Hastie},~\bfnm{Trevor}\binits{T.}},
  \bauthor{\bsnm{H{\"{o}}fling},~\bfnm{Holger}\binits{H.}} \AND
  \bauthor{\bsnm{Tibshirani},~\bfnm{Robert}\binits{R.}}
(\byear{2007}).
\btitle{{Pathwise coordinate optimization}}.
\bjournal{The Annals of Applied Statistics}
\bvolume{1}
\bpages{302--332}.
\end{barticle}
\endbibitem

\bibitem[\protect\citeauthoryear{Guntuboyina et~al.}{2017}]{gunt17}
\begin{barticle}[author]
\bauthor{\bsnm{Guntuboyina},~\bfnm{Adityanand}\binits{A.}},
  \bauthor{\bsnm{Lieu},~\bfnm{Donovan}\binits{D.}},
  \bauthor{\bsnm{Chatterjee},~\bfnm{Sabyasachi}\binits{S.}} \AND
  \bauthor{\bsnm{Sen},~\bfnm{Bodhisattva}\binits{B.}}
(\byear{2017}).
\btitle{{Spatial adaptation in trend filtering}}.
\bjournal{ArXiv ID 1702.05113}.
\end{barticle}
\endbibitem

\bibitem[\protect\citeauthoryear{H{\"{u}}tter and Rigollet}{2016}]{hutt16}
\begin{barticle}[author]
\bauthor{\bsnm{H{\"{u}}tter},~\bfnm{Jan-Christian}\binits{J.-C.}} \AND
  \bauthor{\bsnm{Rigollet},~\bfnm{Philippe}\binits{P.}}
(\byear{2016}).
\btitle{{Optimal rates for total variation denoising}}.
\bjournal{JMLR: Workshop and Conference Proceedings}
\bvolume{49}
\bpages{1--32}.
\end{barticle}
\endbibitem

\bibitem[\protect\citeauthoryear{Jacobs et~al.}{2008}]{jaco08}
\begin{barticle}[author]
\bauthor{\bsnm{Jacobs},~\bfnm{David~P}\binits{D.~P.}},
  \bauthor{\bsnm{Machado},~\bfnm{Catia M~S}\binits{C.~M.~S.}},
  \bauthor{\bsnm{Pereira},~\bfnm{Elaine~C}\binits{E.~C.}} \AND
  \bauthor{\bsnm{Trevisan},~\bfnm{Vilmar}\binits{V.}}
(\byear{2008}).
\btitle{{Computing the inverse of a tree's incidence matrix.}}
\bjournal{Congr. Numerantium}
\bvolume{189}
\bpages{169--176}.
\end{barticle}
\endbibitem

\bibitem[\protect\citeauthoryear{Jia and Rohe}{2015}]{jia15}
\begin{barticle}[author]
\bauthor{\bsnm{Jia},~\bfnm{Jinzhu}\binits{J.}} \AND
  \bauthor{\bsnm{Rohe},~\bfnm{Karl}\binits{K.}}
(\byear{2015}).
\btitle{{Preconditioning the Lasso for sign consistency}}.
\bjournal{Electronic Journal of Statistics}
\bvolume{9}
\bpages{1150--1172}.
\end{barticle}
\endbibitem

\bibitem[\protect\citeauthoryear{Laurent and Massart}{2000}]{laur00}
\begin{barticle}[author]
\bauthor{\bsnm{Laurent},~\bfnm{B}\binits{B.}} \AND
  \bauthor{\bsnm{Massart},~\bfnm{P}\binits{P.}}
(\byear{2000}).
\btitle{{Adaptive estimation of a quadratic functional by model selection}}.
\bjournal{The Annals of Statistics}
\bvolume{28}
\bpages{1302--1338}.
\end{barticle}
\endbibitem

\bibitem[\protect\citeauthoryear{Lin et~al.}{2017}]{lin17b}
\begin{barticle}[author]
\bauthor{\bsnm{Lin},~\bfnm{Kevin}\binits{K.}},
  \bauthor{\bsnm{Sharpnack},~\bfnm{James}\binits{J.}},
  \bauthor{\bsnm{Rinaldo},~\bfnm{Alessandro}\binits{A.}} \AND
  \bauthor{\bsnm{Tibshirani},~\bfnm{Ryan~J}\binits{R.~J.}}
(\byear{2017}).
\btitle{{A sharp error analysis for the fused lasso, with application to
  approximate changepoint screening}}.
\bjournal{Neural Information Processing Systems (NIPS)}
\bvolume{3}
\bpages{42}.
\end{barticle}
\endbibitem

\bibitem[\protect\citeauthoryear{Mammen and van~de Geer}{1997}]{mamm97-2}
\begin{barticle}[author]
\bauthor{\bsnm{Mammen},~\bfnm{Enno}\binits{E.}} \AND
  \bauthor{\bparticle{van~de} \bsnm{Geer},~\bfnm{Sara}\binits{S.}}
(\byear{1997}).
\btitle{{Locally adaptive regression splines}}.
\bjournal{The Annals of Statistics}
\bvolume{25}
\bpages{387--413}.
\end{barticle}
\endbibitem

\bibitem[\protect\citeauthoryear{Miller}{1981}]{mill81}
\begin{barticle}[author]
\bauthor{\bsnm{Miller},~\bfnm{Kenneth~S.}\binits{K.~S.}}
(\byear{1981}).
\btitle{{On the inverse of the sum of matrices}}.
\bjournal{Mathematics Magazine}
\bvolume{54}
\bpages{67--72}.
\end{barticle}
\endbibitem

\bibitem[\protect\citeauthoryear{Ottersten, Wahlberg and Rojas}{2016}]{otte16}
\begin{barticle}[author]
\bauthor{\bsnm{Ottersten},~\bfnm{Johan}\binits{J.}},
  \bauthor{\bsnm{Wahlberg},~\bfnm{Bo}\binits{B.}} \AND
  \bauthor{\bsnm{Rojas},~\bfnm{Cristian~R.}\binits{C.~R.}}
(\byear{2016}).
\btitle{{Accurate changing point detection for l1 mean filtering}}.
\bjournal{IEEE Signal Processing Letters}
\bvolume{23}
\bpages{297--301}.
\end{barticle}
\endbibitem

\bibitem[\protect\citeauthoryear{Owrang et~al.}{2017}]{owra17}
\begin{barticle}[author]
\bauthor{\bsnm{Owrang},~\bfnm{Arash}\binits{A.}},
  \bauthor{\bsnm{Malek-Mohammadi},~\bfnm{Mohammadreza}\binits{M.}},
  \bauthor{\bsnm{Proutiere},~\bfnm{Alexandre}\binits{A.}} \AND
  \bauthor{\bsnm{Jansson},~\bfnm{Magnus}\binits{M.}}
(\byear{2017}).
\btitle{{Consistent change point detection for piecewise constant signals with
  normalized fused LASSO}}.
\bjournal{IEEE Signal Processing Letters}
\bvolume{24}
\bpages{799--803}.
\end{barticle}
\endbibitem

\bibitem[\protect\citeauthoryear{Qian and Jia}{2016}]{qian16}
\begin{barticle}[author]
\bauthor{\bsnm{Qian},~\bfnm{Junyang}\binits{J.}} \AND
  \bauthor{\bsnm{Jia},~\bfnm{Jinzhu}\binits{J.}}
(\byear{2016}).
\btitle{{On stepwise pattern recovery of the fused Lasso}}.
\bjournal{Computational Statistics and Data Analysis}
\bvolume{94}
\bpages{221--237}.
\end{barticle}
\endbibitem

\bibitem[\protect\citeauthoryear{Rojas and Wahlberg}{2014}]{roja14}
\begin{barticle}[author]
\bauthor{\bsnm{Rojas},~\bfnm{Cristian~R}\binits{C.~R.}} \AND
  \bauthor{\bsnm{Wahlberg},~\bfnm{Bo}\binits{B.}}
(\byear{2014}).
\btitle{{On change point detection using the fused lasso method}}.
\bjournal{ArXiv ID 1401.5408v1}.
\end{barticle}
\endbibitem

\bibitem[\protect\citeauthoryear{Rojas and Wahlberg}{2015}]{roja15}
\begin{barticle}[author]
\bauthor{\bsnm{Rojas},~\bfnm{Cristian~R}\binits{C.~R.}} \AND
  \bauthor{\bsnm{Wahlberg},~\bfnm{Bo}\binits{B.}}
(\byear{2015}).
\btitle{{How to monitor and mitigate staircasing in l1 trend filtering}}.
\bjournal{ICASSP}
\bpages{3946--3950}.
\end{barticle}
\endbibitem

\bibitem[\protect\citeauthoryear{Sadhanala, Wang and Tibshirani}{2016}]{sadh16}
\begin{barticle}[author]
\bauthor{\bsnm{Sadhanala},~\bfnm{Veeranjaneyulu}\binits{V.}},
  \bauthor{\bsnm{Wang},~\bfnm{Yu-Xiang}\binits{Y.-X.}} \AND
  \bauthor{\bsnm{Tibshirani},~\bfnm{Ryan~J.}\binits{R.~J.}}
(\byear{2016}).
\btitle{{Total variation classes beyond 1d: minimax rates, and the limitations
  of linear smoothers}}.
\bjournal{NIPS}.
\end{barticle}
\endbibitem

\bibitem[\protect\citeauthoryear{Tibshirani and Taylor}{2011}]{tibs11}
\begin{barticle}[author]
\bauthor{\bsnm{Tibshirani},~\bfnm{Ryan~J}\binits{R.~J.}} \AND
  \bauthor{\bsnm{Taylor},~\bfnm{Jonathan}\binits{J.}}
(\byear{2011}).
\btitle{{The solution path of the generalized LASSO}}.
\bjournal{The Annals of Statistics}
\bvolume{39}
\bpages{1335--1371}.
\end{barticle}
\endbibitem

\bibitem[\protect\citeauthoryear{Tibshirani et~al.}{2005}]{tibs05}
\begin{barticle}[author]
\bauthor{\bsnm{Tibshirani},~\bfnm{Robert}\binits{R.}},
  \bauthor{\bsnm{Saunders},~\bfnm{Michael}\binits{M.}},
  \bauthor{\bsnm{Rosset},~\bfnm{Saharon}\binits{S.}},
  \bauthor{\bsnm{Zhu},~\bfnm{Ji}\binits{J.}} \AND
  \bauthor{\bsnm{Knight},~\bfnm{Keith}\binits{K.}}
(\byear{2005}).
\btitle{{Sparsity and smoothness via the fused lasso}}.
\bjournal{Journal of the Royal Statistical Society: Series B (Statistical
  Methodology)}
\bvolume{67}
\bpages{91--108}.
\end{barticle}
\endbibitem

\bibitem[\protect\citeauthoryear{van~de Geer}{2016}]{vand16}
\begin{bbook}[author]
\bauthor{\bparticle{van~de} \bsnm{Geer},~\bfnm{Sara}\binits{S.}}
(\byear{2016}).
\btitle{{Estimation and testing under sparsity}}
\bvolume{2159}.
\bpublisher{Springer}.
\end{bbook}
\endbibitem

\bibitem[\protect\citeauthoryear{van~de Geer}{2018}]{vand18}
\begin{barticle}[author]
\bauthor{\bparticle{van~de} \bsnm{Geer},~\bfnm{Sara}\binits{S.}}
(\byear{2018}).
\btitle{{On tight bounds for the Lasso}}.
\bjournal{ArXiv ID 1804.00989}.
\end{barticle}
\endbibitem

\bibitem[\protect\citeauthoryear{Zhao and Yu}{2006}]{zhao06}
\begin{barticle}[author]
\bauthor{\bsnm{Zhao},~\bfnm{Peng}\binits{P.}} \AND
  \bauthor{\bsnm{Yu},~\bfnm{Bin}\binits{B.}}
(\byear{2006}).
\btitle{{On model selection consistency of Lasso}}.
\bjournal{The Journal of Machine Learning Research}
\bvolume{7}
\bpages{2541--2563}.
\end{barticle}
\endbibitem

\end{thebibliography}

\end{document}